%version: 27 April 2009
\documentclass[12pt,a4paper,psamsfonts]{amsart}
\usepackage{amssymb,amscd,amsxtra,calc}
\usepackage{cmmib57}
\usepackage[dvips,a4paper,bookmarks,bookmarksopen,%
bookmarksnumbered,pdfauthor={Nakayama and Zhang},%
colorlinks=false]{hyperref}

%%%%% page layout
\setlength{\topmargin}{0cm}
\setlength{\oddsidemargin}{0cm}
\setlength{\evensidemargin}{0cm}
\setlength{\marginparwidth}{0cm}
\setlength{\marginparsep}{0cm}
%%%%%Usual A4 size
% \setlength{\textheight}{23cm}
% \setlength{\textwidth}{16cm}
% \setlength{\footskip}{2cm}
%%%%% modified size
\setlength{\textheight}{\paperheight - 2in -35pt}
\setlength{\textwidth}{\paperwidth - 2in}
\setlength{\headheight}{12.5pt}
\setlength{\headsep}{25pt}
\setlength{\footskip}{30pt}
%%%%% Baselinestretch

%%%%% Page style
\pagestyle{headings}

%%%%%%%%%%%%%%%%%%%%%%%%%%%%%%%%
%
% Author's macro
%
%%%%%%%%%%%%%%%%%%%%%%%%%%%%%%%%

%%%%% Theorem style
\theoremstyle{plain}
    \newtheorem{theorem}{Theorem}[section] %
   \renewcommand{\thetheorem}%
    {\arabic{section}.\arabic{theorem}}
    \newtheorem{corollary}[theorem]{Corollary}
    \newtheorem*{ccorollary}{Corollary}
    \newtheorem{proposition}[theorem]{Proposition}
    \newtheorem{lemma}[theorem]{Lemma}
\theoremstyle{definition}
    \newtheorem{dfn}[theorem]{Definition}
    \newtheorem{conjecture}[theorem]{Conjecture}
\theoremstyle{remark}
    \newtheorem*{cclaim}{Claim}
    \newtheorem{rremark}[theorem]{Remark}
    \newtheorem*{remark}{Remark}
    \newtheorem{example}[theorem]{Example}
%     \newtheorem*{conj}{Conjecture}

%%%Fonts
%%%%% Blackboad bold face
\newcommand{\BCC}{\mathbb{C}}
\newcommand{\BPP}{\mathbb{P}}
\newcommand{\BQQ}{\mathbb{Q}}
\newcommand{\BRR}{\mathbb{R}}
\newcommand{\BZZ}{\mathbb{Z}}

%%%%% script

\newcommand{\SE}{\mathcal{E}}
\newcommand{\SI}{\mathcal{I}}
\newcommand{\SJ}{\mathcal{J}}

\newcommand{\SO}{\mathcal{O}}

\newcommand{\ScS}{\mathcal{S}}
\newcommand{\SU}{\mathcal{U}}
\newcommand{\SV}{\mathcal{V}}
\newcommand{\SY}{\mathcal{Y}}

%%%% mathsf
\newcommand{\NN}{\mathsf{N}}

%%%% mathtt
\newcommand{\Xtt}{\mathtt{X}}
\newcommand{\Ztt}{\mathtt{Z}}

%%Related to maps

\newcommand{\Aut}{\operatorname{Aut}}
\newcommand{\Bir}{\operatorname{Bir}}

\newcommand{\Sur}{\operatorname{Sur}}

%%codim
\newcommand{\Codim}{\operatorname{codim}}

%%Sets
\newcommand{\Alb}{\operatorname{Alb}} %Albanese variety
\newcommand{\alg}{\operatorname{alg}} %algebraic pi one
\newcommand{\Gal}{\operatorname{Gal}}
\newcommand{\SHom}{\operatorname{\mathcal{H}\mathit{om}}}
\newcommand{\Imm}{\operatorname{Im}}
\newcommand{\OH}{\operatorname{H}} %%%Homology, Cohomology group
\newcommand{\OR}{\operatorname{R}} %%%Higher direct image
\newcommand{\NS}{\operatorname{NS}}
\newcommand{\Sing}{\operatorname{Sing}}
\newcommand{\Supp}{\operatorname{Supp}}

\newcommand{\mult}{\operatorname{mult}}

%%%%% Arrows
\newcommand{\isom}{\simeq} %%isomorphism
\newcommand{\ratmap}%%rational map
{{\,\cdot\negmedspace\cdot\negmedspace\cdot\negmedspace\to\,}}
\newcommand{\injmap}{\hookrightarrow}

%%% A number
\newcommand{\q}{\mathsf{q}}

%%subscripts

\newcommand{\reg}{\mathrm{reg}}

%%%numerical equiv
\newcommand{\numeqpre}{%
\overset{\raise.01ex\hbox{$\overset%
{\raise.01ex\hbox{$\sim$}}{\smash{\sim}}$}}{\smash{\sim}}
}
\newcommand{\numeqsymb}{%
\lower.25ex\hbox{$\numeqpre$}}
\newcommand{\numeq}{\mathrel{\numeqsymb}}

%%%%%%%%%%%%%%%%%%%%%%%%%%%%%%%%
%
% End of author's macros
%
%%%%%%%%%%%%%%%%%%%%%%%%%%%%%%%%

\begin{document}
%\begin{large}
\title[]%[Polarized endomorphisms]%
{Polarized endomorphisms of complex normal varieties}
\author[]{Noboru Nakayama}
\address%[Noboru Nakayama]
{\textsc{Research Institute for Mathematical Sciences} \endgraf
\textsc{Kyoto University, Kyoto 606-8502, Japan}}
\email{nakayama@kurims.kyoto-u.ac.jp}

\author[]{De-Qi Zhang}
\address%[De-Qi Zhang]
{\textsc{Department of Mathematics} \endgraf
\textsc{National University of Singapore, 2 Science Drive 2,
Singapore 117543, Singapore}}
\email{matzdq@nus.edu.sg}

%\date{\today}

\begin{abstract}
It is shown that a complex normal projective variety
has non-positive Kodaira dimension if it admits a non-isomorphic
quasi-polarized
endomorphism. The geometric structure of the variety is described by
methods of equivariant lifting and fibrations.
%Endomorphisms of the projective spaces are also discussed and some
%results are obtained for hypersurfaces invariant
%under the pullback of the endomorphism.
\end{abstract}

\subjclass[2000]{14J10, 14E20, 32H50}
\keywords{endomorphism, Calabi-Yau variety, rationally connected variety}
\maketitle

\raggedbottom

\section{Introduction}

We work over the complex number field \( \BCC \).
Much progress has been recently made in the study of
endomorphisms of smooth projective varieties from the algebro-geometric
viewpoint.
Especially, the following cases of varieties are well studied:
projective surfaces (\cite{Ny02}, \cite{FN1}),
homogeneous manifolds (\cite{PS}, \cite{Ct}),
Fano manifolds (\cite{Am97}, \cite{ARV}, \cite{HM}),
projective bundles (\cite{Am}), and
projective threefolds with non-negative Kodaira dimension
(\cite{Fm}, \cite{FN2}).
Additionally, \'etale endomorphisms are investigated in \cite{NZ}
from the viewpoint of the birational classification of algebraic
varieties.
However, there is neither a classification of endomorphisms of
singular varieties even when they are of dimension two (except for \cite{ENS}),
nor any reasonably fine classification of
non-\'etale endomorphisms of smooth threefolds,
which are then necessarily uniruled.

Let \( V \) be a normal projective variety of dimension \( n \).
An endomorphism \( f \colon V \to V \) is called \emph{polarized}
if there is an ample divisor \( H \) such that \( f^{*}H \) is
linearly equivalent to \( \q H \) (\( f^{*}H \sim \q H \))
for a positive number \( \q \).
In this case, \( f \) is a finite surjective morphism,
\( \q \) is an integer, and \( \deg f = \q^{n} \)
(cf.\ Lemma~\ref{lem:first} below).
A surjective endomorphism of a variety of Picard number one is always
polarized.
Polarized endomorphisms of smooth projective varieties are
studied in papers \cite{Fa} and \cite{Zs}.
In the present article, we shall study the polarized endomorphisms of
normal projective varieties (not only smooth ones).
The following
Theorems~\ref{Th0} and \ref{Th1.2new}
%and \ref{ThE}
are our main results.

\begin{theorem}\label{Th0}
Let \( f \colon X \to X \) be
a non-isomorphic polarized endomorphism of a normal projective variety
\( X \). Then there exist a finite morphism \( \tau \colon V \to X \)
from a normal projective variety \( V \),
a dominant rational map \( \pi \colon V \ratmap A \times S \) for an
abelian variety \( A \) and a weak Calabi--Yau variety \(S\)
\emph{(cf.\ Definition~\ref{dfn:wCY} below)}, and
polarized endomorphisms \( f_{V} \colon V \to V \), \( f_{A} \colon A
\to A \) and \( f_{S} \colon S \to S \) satisfying the following
conditions\emph{:}
\begin{enumerate}
\item  \( \tau \circ f_{V} = f \circ \tau\),
\( \pi \circ f_{V} = (f_{A} \times f_{S}) \circ \pi  \),
i.e., the diagram below is commutative\emph{:}
\newcommand{\lratmap}{%
\longleftarrow\negmedspace\cdot\negmedspace\cdot\negmedspace\cdot}
\[ \begin{array}{ccccc}
A \times S & \overset{\pi}{\lratmap} & V & \overset{\tau}{\longrightarrow} & X \\
\scriptstyle{f_{A} \times f_{S}} \Big\downarrow \phantom{\scriptstyle{f_{A}}} &
&  \scriptstyle{f_{V}}\, \Big\downarrow \, \phantom{\scriptstyle{f_{V}}}
& & \scriptstyle{f} \Big\downarrow \phantom{\scriptstyle{f}} \\
A \times S & \overset{\pi}{\lratmap} & V
& \overset{\tau}{\longrightarrow} & \phantom{.}X.
\end{array}\]

\item  \( \tau \) is \'etale in codimension one.

\item If \( X \) is not uniruled, then the Kodaira dimension \( \kappa(X) = 0 \)
and \( \pi \) is an isomorphism.

\item \label{Th0:4} If \( X \) is uniruled, then,
for the graph \( \Gamma_{\pi}  \subset V \times A \times S \)
of \( \pi \),
the projection \( \Gamma_{\pi}  \to A \times S\) is an
equi-dimensional morphism birational to the
maximal rationally connected fibration
\emph{(MRC fibration in the sense of \cite{KoMM}, cf.\
\cite{Cp92}, \cite{GHS})}
of a smooth model of \( V \).

\item If \( \dim S > 0 \), then \( \dim S \geq 4 \) and \( S \)
contains a non-quotient singular point.
\end{enumerate}
\end{theorem}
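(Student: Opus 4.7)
The plan is to split the analysis by whether $X$ is uniruled, handle the non-uniruled case via a Beauville--Bogomolov type decomposition, reduce the uniruled case to it through the MRC fibration, and establish the dimension bound on $S$ by low-dimensional classification.

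First I would treat the non-uniruled case. The polarization $f^{*}H\sim qH$ together with $\deg f=q^{n}$ (where $n=\dim X$) and the projection formula gives $(q-1)\,K_{X}\cdot H^{n-1}=R\cdot H^{n-1}\geq 0$, where $R$ is the ramification divisor. Since $f$ is not an isomorphism we have $q\geq 2$; combined with nefness of $K_{X}$ supplied by MMP under non-uniruledness, together with an iteration argument controlling the growth of $(f^{i})^{*}R$, this forces $K_{X}$ to be numerically trivial, so $\kappa(X)=0$. A singular Beauville--Bogomolov decomposition, valid under the mild singularities preserved by a polarized endomorphism, then gives a finite cover $\tau\colon V\to X$, \'etale in codimension one, with $V\isom A\times S$ for an abelian variety $A$ and a weak Calabi--Yau $S$.

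To lift $f$ I would use that $f^{*}$ permutes the finitely many subgroups of $\pi_{1}(X^{\reg})$ of index $\deg\tau$, so some iterate $f^{m}$ preserves the one defining $V$; replacing $V$ by a Galois extension of the same form one arranges a lift $f_{V}\colon V\to V$ of $f$ itself. On $V\isom A\times S$ the Albanese projection to $A$ is functorial, so $f_{V}$ descends to a polarized $f_{A}$; triviality of $\Aut^{0}(S)$ (from the weak Calabi--Yau hypothesis via Serre duality) then forces $f_{V}$ to split as $f_{A}\times f_{S}$ after absorbing a torsion translation on $A$. This establishes (1)--(3) with $\pi$ an isomorphism.

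For the uniruled case I would apply the previous analysis to the base $B$ of the MRC fibration of a smooth model of $V$, which is non-uniruled by Graber--Harris--Starr. Uniqueness of the MRC quotient forces $f$ to descend to a polarized $f_{B}$, so the previous step yields $B$ birational to $A\times S$ carrying $f_{A}\times f_{S}$, and the diagram (1) is assembled by taking graphs and lifting to a common cover. The delicate point is the equi-dimensionality in (\ref{Th0:4}): the endomorphism $f_{V}$ acts on fibers of $\pi$ with constant degree $q^{\dim V-\dim(A\times S)}$, and iterating $f_{V}$ together with boundedness of the Chow scheme of rationally connected subvarieties of bounded anticanonical degree should force constancy of fiber dimension.

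Finally, for (5) I would rule out $1\leq\dim S\leq 3$ by classification: smooth K3 surfaces admit no polarized endomorphism of degree greater than one since $f^{*}$ is an isometry of finite order on the torsion-free N\'eron--Severi lattice, analogous rigidity handles smooth Calabi--Yau threefolds, and Enriques or bielliptic surfaces are quotients of abelian varieties and may be absorbed into $A$ after a further cover. Hence $\dim S\geq 4$, and $S$ must carry a non-quotient singularity, else $S$ would itself be a quotient of an abelian variety and be absorbed. The main obstacles are expected to be the singular Beauville--Bogomolov step, requiring fine control of the singularities preserved by $f$, and the equi-dimensionality in (\ref{Th0:4}), where the birational nature of the MRC fibration must be reconciled with the rigidity imposed by the polarized self-map.
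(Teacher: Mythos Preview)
Your overall architecture matches the paper's, but several of the load-bearing steps are not what you think they are.

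In the non-uniruled case, ``nefness of $K_{X}$ supplied by MMP'' is not available: $X$ is an arbitrary normal projective variety, not a minimal model. The paper instead passes to a smooth model $Y$, uses that $K_{Y}$ is pseudo-effective (by \cite{BDPP}, since $Y$ is non-uniruled), shows $K_{Y}\cdot(\mu^{*}H)^{n-1}=0$, and then uses the $\sigma$-decomposition together with a Hodge-index argument (Lemma~\ref{lem:HI}) to conclude $P_{\sigma}(K_{Y})\numeq 0$; finally the abundance theorem for $\kappa_{\sigma}=0$ from \cite{Ny04} gives $\kappa(Y)=0$. Your iteration-of-ramification idea does not substitute for this. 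For the lift of $f$ to $V$, the paper does not permute subgroups of $\pi_{1}(X_{\reg})$; it uses that both the global index-one cover and the Albanese closure (Lemma~\ref{lem:AlbClos}) are characterized by universal properties, so $f$ lifts canonically (Proposition~\ref{prop:qXfqX}). Your permutation argument at best lifts a power of $f$, and fixing this afterward is not straightforward.

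The two most serious gaps are in (\ref{Th0:4}) and (5). For (\ref{Th0:4}), the descent of the polarized endomorphism to the MRC base \emph{and} the equi-dimensionality of the resulting fibration are exactly the content of Lemma~\ref{lem:MRC}, imported from \cite{IntS} via the theory of intersection sheaves; the paper explicitly notes (Remark~\ref{rem:DelignePair}) that the earlier argument in \cite{Zs} along the lines you sketch is incomplete without this machinery. Your Chow-scheme boundedness idea does not address why the graph over $A\times S$ is equi-dimensional. For (5), your classification argument treats only smooth $S$, but a weak Calabi--Yau variety is allowed canonical singularities, so the K3 lattice-isometry argument does not apply. The paper's mechanism is entirely different: Theorem~\ref{thm:Qab} shows that a non-isomorphic polarized endomorphism \'etale in codimension one on a variety whose singular points have finite local algebraic fundamental group forces the variety to be Q-abelian; this hypothesis holds when $\dim S\leq 3$ (by \cite{SW}) or when $S$ has only quotient singularities, whence such $S$ would be Q-abelian, contradicting $q^{\circ}(S)=0$.
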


In case \( X \) is smooth and \( \kappa(X) \geq 0 \),
Theorem~\ref{Th0} with \( S \) being a point
is proved in \cite{Fa}, Theorem~4.2.
For uniruled \( X \), there is a discussion related to Theorem~\ref{Th0}
on endomorphisms and maximal rationally connected fibrations in \cite{Zs},
Section 2.2, especially in Proposition~2.2.4
(cf.\ Remark~\ref{rem:DelignePair} below).
We expect that \( \dim S = 0 \) for the variety \( S \) in Theorem~\ref{Th0}.
To be precise, we propose:

\begin{conjecture}\label{conj:Qab}
A non-uniruled normal projective variety admitting
a non-isomorphic polarized endomorphism is Q-abelian
(cf.\ Definition~\ref{dfn:Qab} below),
i.e., there is a finite surjective morphism \'etale in codimension one
from an abelian variety onto the variety.
\end{conjecture}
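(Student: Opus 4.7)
The plan is to reduce Conjecture~\ref{conj:Qab}, via Theorem~\ref{Th0}, to a statement about weak Calabi--Yau varieties, and then to attack the reduced statement through the ramification formula together with the singular Beauville--Bogomolov decomposition.

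First I would apply Theorem~\ref{Th0} to the non-uniruled variety \(X\). By part~(3) we have \(\kappa(X)=0\) and an \emph{isomorphism} \(\pi\colon V \isom A\times S\) with \(A\) abelian and \(S\) weak Calabi--Yau, together with \(f_V = f_A \times f_S\). Since \(S\) has irregularity zero, the See-Saw principle yields \(\operatorname{Pic}(A\times S)=\operatorname{Pic}(A)\oplus\operatorname{Pic}(S)\); in particular the ample pullback \(\tau^{*}H\) decomposes as \(H_A\boxtimes H_S\), and the eigenvalue relation \((f_A\times f_S)^{*}(H_A\boxtimes H_S)\sim \q(H_A\boxtimes H_S)\) forces \(f_A^{*}H_A\sim \q H_A\) and \(f_S^{*}H_S\sim \q H_S\). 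Since \(f\) is non-isomorphic we have \(\q\geq 2\), so to conclude \(V\isom A\) and hence that \(X\) is Q-abelian, it suffices to show that no weak Calabi--Yau variety of positive dimension admits a polarized endomorphism of scalar \(\q\geq 2\).

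To prove this reduced statement I would use that \(K_S\equiv 0\): the ramification formula \(K_S = f_S^{*}K_S + R\) gives \(R\equiv 0\), and effectivity of \(R\) yields \(R=0\), so \(f_S\) is étale in codimension one of degree \(\q^{\dim S}\geq 2^{\dim S}\). Iterating produces an infinite tower of pairwise distinct quasi-étale self-covers of \(S\) of unbounded degree. On the other hand, after passing to a suitable quasi-étale cover, the singular Beauville--Bogomolov decomposition of Greb--Kebekus--Peternell (with later refinements by Druel and by Guenancia--Höring--Peternell in the klt case) splits \(S\), up to quasi-étale cover, as a product of an abelian factor, strict Calabi--Yau factors and irreducible holomorphic symplectic factors; the abelian factor must be a point because \(\q(S)=0\), and each remaining factor has finite étale-in-codimension-one fundamental group, which contradicts the infinite tower.

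The main obstacle is precisely this last step. For smooth simply-connected Calabi--Yau or holomorphic symplectic manifolds the finiteness of the quasi-étale fundamental group is classical, but the weak Calabi--Yau \(S\) appearing here is permitted by Theorem~\ref{Th0}(5) to be a singular variety of dimension at least four with non-quotient singularities, so one must rely on the very recent finiteness theorems for étale-in-codimension-one fundamental groups in the singular klt setting and verify that they apply under the precise weak Calabi--Yau hypothesis used in Theorem~\ref{Th0}. A subsidiary point is to confirm that the iterated self-cover tower remains infinite after passage to any fixed finite Galois cover; I expect to handle this by tracking the action of \(f_S^{*}\) on \(\NS(S)\otimes\BQQ\), where the eigenvalue \(\q\geq 2\) on this finite-dimensional rational vector space forces the iterates to produce genuinely distinct pulled-back ample classes and hence genuinely distinct quasi-étale covers.
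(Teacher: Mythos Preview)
The statement you are attempting is Conjecture~\ref{conj:Qab}, which the paper explicitly leaves open; there is no proof of the full conjecture in the paper to compare against. What the paper does prove is the partial result Theorem~\ref{thm:solQab}: the conjecture holds when \(\dim X\le 3\) or when \(X\) has only quotient singularities. That argument goes through Theorem~\ref{thm:Qab}, whose key hypothesis is that the \emph{local} algebraic fundamental group \(\pi_1^{\alg}(\SU_{\reg})\) near each singular point is finite; one then runs the Galois-closure tower of Lemma~\ref{lem:Galcl}, compares intersection numbers to force a cover \(V_k\) to be smooth with \(c_1=c_2=0\), and concludes via Yau's theorem.

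Your proposed route---reduce via Theorem~\ref{Th0} to a weak Calabi--Yau factor \(S\) carrying a quasi-\'etale self-map of degree \(>1\), then invoke the singular Beauville--Bogomolov decomposition and finiteness of the quasi-\'etale fundamental group for the non-abelian factors---is genuinely different and rests on results (Greb--Kebekus--Peternell, Druel, H\"oring--Peternell) that postdate this paper. You correctly identify the main obstacle: establishing that the irreducible Calabi--Yau and symplectic factors of the singular decomposition have finite \(\pi_1^{\alg}\) of the regular locus is precisely the missing ingredient, and it is not available within the paper's toolkit. An alternative closer in spirit to the paper, and also requiring post-publication input, is to feed Xu's finiteness theorem for local algebraic fundamental groups of klt singularities directly into Theorem~\ref{thm:Qab}, which supplies exactly the hypothesis left open there.

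One small correction: your subsidiary worry about the tower ``remaining infinite after passage to any fixed finite Galois cover'' is overcomplicated, and the \(\NS(S)\) argument you sketch does not address it. The iterates \(f_S^k\) are quasi-\'etale of degree \(\q^{k\dim S}\), so \(\pi_1^{\alg}(S_{\reg})\) already has normal subgroups of unbounded finite index and is therefore infinite; any finite-index subgroup of an infinite profinite group is again infinite, so the contradiction survives passage to any fixed quasi-\'etale cover without further work.
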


The conjecture has been proved affirmatively
in \cite{Fa}, Theorem~4.2, for the case of
smooth varieties with non-negative Kodaira dimension.
In Theorem~\ref{thm:solQab} below, we confirm the conjecture
for a non-uniruled variety \( X \) such that
\( \dim X \leq 3 \) or that \( X \) has only quotient singularities.

Applying Theorem~\ref{Th0} and more, we have the following
classification result,
where \(q^{\natural}(X, f) \) denotes the supremum
of irregularities \( q(\widetilde{X}') \) of a smooth model \( \widetilde{X}' \)
of \( X' \) for all the finite coverings
\( \tau \colon X' \to X \) \'etale in codimension one and admitting an endomorphism
\( f' \colon X' \to X' \) with \( \tau \circ f' = f \circ \tau \);
we also define a similar notion \( q^{\natural}(X) \) (independent of \( f \))
so that \(q^{\natural}(X, f) \le q^{\natural}(X) \) in general, with equality holds
when \( X \) is non-uniruled (cf.\ Definition~\ref{dfn:qnat}, Proposition~\ref{prop:qXfqX},
Theorem~\ref{thm:nonuniruled}); see also Lemmas~\ref{lem:qYqX} and \ref{lem:circsharp1}.

\begin{theorem}\label{Th1.2new}
Let \( f \colon X \to X \) be a non-isomorphic polarized endomorphism of a normal
projective variety \( X \) of dimension \( n \).
Then \( \kappa(X) \leq 0 \) and \( q^{\natural}(X, f) \leq n\).
Furthermore, \( X \) is described as follows\emph{:}
\begin{enumerate}
\item \label{Th1.2new:0}
Assume that \( q^{\natural}(X, f) = 0 \).
If \( n \leq 3 \), or more generally, if \emph{Conjecture~\ref{conj:Qab}} is true
for varieties of dimension at most \( n \), then
\( X \) is rationally connected.
\item \label{Th1.2new:n}
\( q^{\natural}(X, f) = n \) if and only if \( X \) is Q-abelian
\emph{(cf.\ Definition~\ref{dfn:Qab} below)}.

\item \label{Th1.2new:other}
Assume that \( q^{\natural}(X, f) \geq n - 3 \), or more generally,
that \emph{Conjecture~\ref{conj:Qab}}
is true for varieties of dimension at most \( n - q^{\natural}(X, f) \).
Then there exist
a finite covering \( \tau \colon V \to X \)
\'etale in codimension one,
a birational morphism \( \rho \colon Z \to V \)
of normal projective varieties, and
a flat surjective morphism \( \varpi \colon Z \to A \)
onto an abelian variety \( A \) of dimension \( q^{\natural}(X, f) \),
and polarized endomorphisms \( f_{V} \colon V \to V \),
\( f_{Z} \colon Z \to Z \), \( f_{A} \colon A \to A \)
such that
\begin{itemize}
\item  every fiber of \( \varpi \) is irreducible, normal, and rationally connected,
\item %%%added
\( \tau \circ f_{V} = f \circ \tau\), \( \rho \circ f_{Z} = f_{V} \circ \rho \),
and \( \varpi \circ f_{Z} = f_{A} \circ \varpi\), i.e.,
the diagram below is commutative\emph{:}
\[ \begin{CD}
A @<{\varpi}<< Z @>{\rho}>> V @>{\tau}>> X \\
@V{f_{A}}VV @V{f_{Z}}VV @V{f_{V}}VV @V{f}VV \\
A @<{\varpi}<< Z @>{\rho}>> V @>{\tau}>> \phantom{.}X.
\end{CD} \]
\end{itemize}
Moreover, the fundamental group \( \pi_{1}(X) \) has a
finite-index subgroup which is a finitely generated abelian group of
rank at most \( 2q^{\natural}(X, f) \).

\item  \label{Th1.2new:n-1} If \( q^{\natural}(X, f) = n - 1 \), then there is
a finite covering \( \tau \colon V \to X \) %%%added the name ``\tau''
\'etale in codimension one from a normal projective variety \( V \)
admitting an endomorphism \( f_{V} \colon V \to V \) %%%added the endomorphism
with \( \tau \circ f_{V} = f \circ \tau \)
such that one of the following conditions is satisfied\emph{:}
\begin{enumerate}
\item \label{Th1.2new:n-1:a}
\( V \) is a \( \BPP^{1} \)-bundle over an abelian variety.

\item  \label{Th1.2new:n-1:b}
There exist a \( \BPP^{1} \)-bundle \( Z \) over an abelian
variety and a birational morphism \( Z \to V \) whose exceptional locus
is a section of the \( \BPP^{1} \)-bundle.
\end{enumerate}
\end{enumerate}

\end{theorem}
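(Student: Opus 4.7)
The plan is to derive Theorem~\ref{Th1.2new} from the structure theorem Theorem~\ref{Th0} by identifying $q^{\natural}(X, f)$ with $\dim A$ in the output diagram and then case-splitting on the dimension of the weak Calabi--Yau factor $S$. First, apply Theorem~\ref{Th0} to obtain $\tau\colon V\to X$ étale in codimension one, the dominant rational map $\pi\colon V\ratmap A\times S$, and the compatible polarized endomorphisms $f_V$, $f_A$, $f_S$. Because a weak Calabi--Yau variety has vanishing irregularity on any smooth model, and because the MRC fibration of a smooth model of $V$ is birational to the projection to $A\times S$ (Theorem~\ref{Th0}(\ref{Th0:4})), one sees that $q$ of a smooth model of $V$ equals $\dim A$; combined with Lemmas~\ref{lem:qYqX} and~\ref{lem:circsharp1} and Proposition~\ref{prop:qXfqX}, this yields $q^{\natural}(X, f) = \dim A$. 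The bound $q^{\natural}(X, f) \leq n$ is then immediate, and $\kappa(X) \leq 0$ follows: in the non-uniruled case, Theorem~\ref{Th0}(3) gives $\kappa(X) = 0$, while the uniruled case gives $\kappa(X) = -\infty$.

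For (\ref{Th1.2new:n}): if $q^{\natural}(X, f) = n$, then $\dim A = n$ and $\dim S = 0$, so $V$ is birational to the abelian variety $A$ and in particular non-uniruled; Theorem~\ref{Th0}(3) then makes $\pi$ an isomorphism $V\isom A$, so $\tau$ exhibits $X$ as Q-abelian. The converse is contained in the computation of $q^{\natural}$ for Q-abelian varieties (cf.\ Lemmas~\ref{lem:qYqX},~\ref{lem:circsharp1}). For (\ref{Th1.2new:0}): $q^{\natural}(X, f) = 0$ forces $\dim A = 0$. Under Conjecture~\ref{conj:Qab} in dimension $\leq n$, if $X$ were non-uniruled, then $X$ itself would be Q-abelian with $q^{\natural}(X, f) = n$, forcing $n = 0$. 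If $X$ is uniruled, Theorem~\ref{Th0}(\ref{Th0:4}) identifies the MRC fibration of a smooth model of $V$ with the projection to $S$; applying the conjecture to the weak Calabi--Yau $S$ (which carries the polarized endomorphism $f_S$), either $\dim S = 0$, whence a smooth model of $V$---and hence $X$---is rationally connected, or $S$ would be nontrivially Q-abelian, contradicting $q(S) = 0$.

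For (\ref{Th1.2new:other}): the assumption makes $\dim S = n - q^{\natural}(X, f)$ either at most $3$ (so Theorem~\ref{Th0}(5) forces $\dim S = 0$) or within reach of Conjecture~\ref{conj:Qab}, yielding $\dim S = 0$ as in~(\ref{Th1.2new:0}). The rational map $\pi\colon V\ratmap A$ then becomes birational to the MRC fibration; let $Z$ be the normalization of the graph $\Gamma_{\pi}$, $\rho\colon Z\to V$ the induced birational morphism and $\varpi\colon Z\to A$ the equi-dimensional projection. Flatness of $\varpi$ follows from $A$ being smooth, $Z$ Cohen--Macaulay, and $\varpi$ equi-dimensional; its fibers are irreducible, normal, and rationally connected as fibers of an MRC fibration, after possibly enlarging~$Z$. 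The endomorphism $f_V$ lifts to $Z$ by functoriality of the graph and descends to $f_A$ by Theorem~\ref{Th0}. For the fundamental group: smooth rationally connected varieties are simply connected, so the fibration exact sequence applied to a smooth model yields $\pi_1$ of the smooth locus of $Z$ commensurable with $\pi_1(A)\isom\BZZ^{2q^{\natural}(X, f)}$, which passes to $\pi_1(X)$ via the étale-in-codimension-one cover $\tau$.

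For (\ref{Th1.2new:n-1}): $q^{\natural}(X, f) = n-1$ gives $\dim S \leq 1$, which Theorem~\ref{Th0}(5) forces to be~$0$. Hence $V$ is birational, via $\varpi\colon Z\to A$, to a $\BPP^1$-fibration over $A$, equivariant with respect to the polarized endomorphisms. A detailed analysis of polarized $\BPP^1$-fibrations over abelian varieties, in the spirit of~\cite{Am} and~\cite{Fa}, shows that after possibly passing to a further cover étale in codimension one, either every fiber of $\varpi$ is a $\BPP^1$, giving~(\ref{Th1.2new:n-1:a}), or exactly one section of the $\BPP^1$-bundle is contracted by $\rho$, giving~(\ref{Th1.2new:n-1:b}). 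The main obstacle is this last step: ruling out all other fiber degenerations requires extracting strong constraints from the polarized $f_A$-equivariance on the singular loci of $\varpi$; a secondary difficulty is to ensure that $Z$ in~(\ref{Th1.2new:other}) has normal fibers everywhere rather than merely generically, which requires choosing the normalization and the base modification carefully.
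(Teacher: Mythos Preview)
Your overall strategy matches the paper's: reduce to Theorem~\ref{Th0}, identify \(q^{\natural}(X,f)\) with \(\dim A\), and kill the weak Calabi--Yau factor \(S\) under the stated hypotheses. The arguments for \(\kappa(X)\le 0\), for~(\ref{Th1.2new:n}), and for~(\ref{Th1.2new:0}) are essentially the paper's.

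The genuine gap is in~(\ref{Th1.2new:other}) and~(\ref{Th1.2new:n-1}), and you have correctly flagged it yourself without seeing how to close it. Your flatness argument is not valid: you assert that \(Z\) is Cohen--Macaulay, but \(Z\) is only the normalization of a graph over a base that is not known to be smooth in codimension corresponding to the fiber dimension, and no Cohen--Macaulay property is available a priori. Likewise, the MRC construction only tells you that a \emph{general} fiber of \(\varpi\) is irreducible, normal, and rationally connected; nothing in the definition or in Theorem~\ref{Th0}(\ref{Th0:4}) gives this for every fiber, and ``possibly enlarging \(Z\)'' does not help. The paper supplies a single dynamical lemma that closes all of these gaps simultaneously (Lemma~\ref{lem:P1bdl}, Step~1): if \(\Sigma\subset A\) is not Zariski-dense and \(f_A^{-1}(\Sigma)\subset\Sigma\), then \(\Sigma=\emptyset\). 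This uses only that \(f_A\) is a polarized endomorphism of degree \(\q^{\dim A}>1\) on the abelian variety \(A\). One then takes \(\Sigma\) successively to be the locus of reducible fibers, the non-flat locus (characterized via local freeness of \(\varpi_*\SO_Z(mL)\)), the locus of non-normal fibers (here a splitting-of-trace argument, Lemma~\ref{lem:splitnormal}, is needed), and the locus of non-rationally-connected fibers; in each case the bad locus is stable under \(f_A^{-1}\) because of the commutative square with \(f_Z\), and is not dense because the generic fiber is good.

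The same dynamical lemma is what drives~(\ref{Th1.2new:n-1}). Once \(\varpi\) is a \(\BPP^1\)-bundle, the exceptional locus \(E\) of \(\rho\colon Z\to V\) satisfies \(f_Z^{-1}(E)=E\), hence \(\varpi(E)\) is \(f_A^{-1}\)-stable and must be all of \(A\); the locus where a whole \(\BPP^1\)-fiber lies in \(E\) is likewise \(f_A^{-1}\)-stable and hence empty. So \(E\to A\) is finite surjective, and a short curve argument on \(Z\times_A C\) shows \(E\) is a section. Your appeal to a ``detailed analysis in the spirit of \cite{Am} and \cite{Fa}'' is not what is used; the mechanism is again the invariance-under-\(f_A^{-1}\) trick. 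Finally, for the fundamental-group statement the paper does not use a fibration exact sequence on \(Z\) but rather Lemma~\ref{lem:qYqX}(\ref{lem:qYqX:3}), which combines Koll\'ar's theorem on \(\pi_1\) of rationally connected fibrations with Takayama's result on \(\pi_1\) of log-terminal varieties to produce an isomorphism \(\pi_1(W)\isom\pi_1(Y)\) and a surjection onto \(\pi_1(X)\).
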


\subsection*{Notation and Conventions}

The readers may refer to the standard references such as
\cite{KMM} and \cite{KM} for things related to
the birational classification theory of algebraic
varieties and the minimal model theory of projective varieties,
e.g., log-terminal singularity, canonical singularity, etc.

For a normal variety \( X \), the \emph{canonical divisor}, denoted by \( K_{X} \),
is defined as the natural extension of the canonical divisor of the
smooth locus of \( X \).
Details on a relation between the canonical divisor and the dualizing sheaf
\( \omega_{X} \) and details on Weil divisors on normal varieties are explained in
\cite{Re80}, Appendix to \S 1. The notion of \emph{canonical singularity}
is introduced in the same paper \cite{Re80}.

The \emph{Kodaira dimension} \( \kappa(M) \)
of a smooth projective variety \( M \) is a birational invariant.
The Kodaira dimension \( \kappa(X) \) of a singular projective variety \( X \)
is defined as the Kodaira dimension \( \kappa(M) \) of
a \emph{smooth model} \( M \) of \( X \),
i.e., a smooth projective variety \( M \) birational to \( X \).

The \emph{linear equivalence} relation of divisors
is denoted by the symbol \( \sim \),
the \( \BQQ \)-\emph{linear equivalence} relation by
\( \sim_{\BQQ} \), and the \emph{numerical equivalence}
relation by \( \numeq \).

For a projective variety \(Z\), the \emph{singular locus} is denoted by
\( \Sing Z \) and the \emph{smooth locus} \( Z \setminus \Sing Z \) by \( Z_{\reg} \).

Let \( f \colon Z' \to Z \) be a finite surjective morphism of normal varieties.
We denote by \( R_{f} \) the \emph{ramification divisor} of \( f \), which is just
the natural extension of the ramification divisor of the restriction
\( Z'_{\reg} \cap f^{-1}(Z_{\reg}) \to Z_{\reg} \) of \( f \), where
the closed subset \( Z' \setminus (Z'_{\reg} \cap f^{-1}(Z_{\reg})) \)
has codimension at least two;
in other words, for a prime divisor \( \Gamma \) on \( Z' \),
\( \mult_{\Gamma}(R_{f}) = m - 1 \) if and only
if \( \mult_{\Gamma} \left(f^{*}(f(\Gamma))\right) = m\),
where \( \mult_{\Gamma} (D)\) denotes the multiplicity
of a divisor \( D \) along \( \Gamma \).
As usual, we have the \emph{ramification formula}:
\( K_{Z'} = f^{*}(K_{Z}) + R_{f} \).
The finite surjective morphism is called \emph{\'etale in codimension one}
if \( R_{f} = 0 \) or equivalently if \( f \) is \'etale over
\( Z \setminus \Sigma \) for a closed subset \( \Sigma \)
with \( \Codim (\Sigma) \geq 2  \).

\begin{rremark}\label{etale1}
If \( Z \) is smooth and if \( \Sigma \) is a closed subset
with \( \Codim (\Sigma) \geq 2\),
then the natural homomorphism
\( \pi_{1}(Z \setminus \Sigma) \to \pi_{1}(Z) \) of the fundamental groups
is isomorphic.
This property implies the birational invariance of the fundamental group of
a smooth projective variety.
Moreover, the same property implies that if \( Z \) is smooth and
if \( Z' \to Z \) is a finite surjective
morphism \'etale in codimension one from a normal variety \( Z' \),
then, it is actually \'etale.
Therefore, for an arbitrary normal variety \( V \),
a finite surjective morphism \( V' \to V \) \'etale in codimension one
from a normal variety \( V' \) is determined uniquely
up to isomorphism over \( V \)
by a finite index subgroup of \( \pi_{1}(V_{\reg}) \).
\end{rremark}

The \emph{irregularity} \( q(X) \) of a normal projective variety \( X \) is
defined as \( \dim \OH^{1}(X, \SO_{X}) \).
In Definition~\ref{dfn:qnat} below,
we define \( q^{\circ}(X) \) to be the supremum of \( q(X') \) for
all the normal projective varieties \( X' \)
with finite surjective morphisms \( X' \to X \)
\'etale in codimension one.
More variants of the irregularity \( q(X) \)
are defined in Definition~\ref{dfn:qnat}.

A normal projective variety \(Y\) with only canonical singularities
is called a \emph{weak Calabi--Yau variety}
if \(K_Y \sim 0\) and \(q^{\circ}(Y) = 0\)
(cf.\ Definition~\ref{dfn:wCY} below and its remark).

A normal projective variety \(W\) is called \emph{Q-abelian}
if there is a finite surjective morphism
\(A \to W\) \'etale in codimension one from an abelian variety \(A\)
(cf.\ Definition~\ref{dfn:Qab} below and its remark).
A similar notion ``Q-torus'' is introduced in \cite{Ny99},
which is a K\"ahler version and is restricted to \'etale coverings.

An endomorphism \( f \colon X \to X \) is called \emph{polarized}
(resp.\ \emph{quasi-polarized}) if
\( f^{*}H \sim \q H \) for an ample divisor
(resp.\ a nef and big divisor) \( H \) for some positive integer \( \q \) (cf.\
Lemma~\ref{lem:first} below).

\subsection*{Acknowledgement}
The second author would like to express his gratitude to
Professors Fr\'ed\'eric Campana and
Nessim Sibony for the valuable comments, and
to Research Institute for Mathematical Sciences, Kyoto University
for the support and warm hospitality during the visit in the second half of 2007.
He would also like to thank
the following institutes for the support and hospitality:
University of Tokyo, Nagoya University, and Osaka University.

\section{Some basic properties}

A surjective endomorphism of a normal projective variety is a finite
morphism by the same argument as in \cite{Fm}, Lemma~2.3. In fact,
such an endomorphism \( f \colon X \to X \) induces an automorphism
\( f^{*} \colon \NN^{1}(X) \to \NN^{1}(X) \) of the real vector
space \( \NN^{1}(X) := \NS(X) \otimes \BRR \)
for the N\'eron--Severi group \( \NS(X) \), so the
pullback of an ample divisor is ample,
which implies the finiteness of \( f\).

\begin{lemma}\label{lem:first}
Let \( f \colon X \to X \) be an endomorphism of an
\( n \)-dimensional normal projective
variety \( X \) such that \( f^{*}H \numeq \q H \) for a positive
number \( \q \) and for a nef and big divisor \( H \).
Then  \( \q \) is a positive integer and \( \deg f = \q^{n} \).
Moreover, the absolute value of any eigenvalue of \( f^{*} \colon
\NN^{1}(X) \to \NN^{1}(X) \) is \( \q \).
\end{lemma}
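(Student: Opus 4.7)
The plan is to derive the three assertions in sequence: finiteness and $\deg f=\q^{n}$ from the projection formula, integrality of $\q$ from elementary divisibility, and the eigenvalue bound from a Hodge index argument.

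First I would observe that $f$ is surjective (hence finite, by the remark preceding the lemma): otherwise $f^{*}H$ would be numerically pulled back from a proper subvariety of dimension $<n$, forcing $(f^{*}H)^{n}=0$, which contradicts $(f^{*}H)^{n}=\q^{n}H^{n}>0$. The projection formula then gives $(f^{*}H)^{n}=\deg(f)\cdot H^{n}$, while $f^{*}H\numeq\q H$ yields $(f^{*}H)^{n}=\q^{n}H^{n}$; since $H^{n}>0$, we obtain $\deg f=\q^{n}$.

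Next, intersecting $f^{*}H\numeq\q H$ with $H^{n-1}$ gives $(f^{*}H)\cdot H^{n-1}=\q\,H^{n}$. The left side is an integer (intersection of Cartier divisors on the normal projective variety $X$) and $H^{n}$ is a positive integer, so $\q\in\BQQ_{>0}$. Since $\q$ also satisfies the monic polynomial $x^{n}-\deg(f)=0$ with integer coefficients, it is a rational algebraic integer, hence $\q\in\BZZ_{>0}$.

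For the eigenvalue statement, I would introduce the symmetric bilinear form $Q(\alpha,\beta):=\alpha\cdot\beta\cdot H^{n-2}$ on $\NN^{1}(X)$, extended sesquilinearly to $\NN^{1}(X)\otimes_{\BRR}\BCC$. Combining $f^{*}H\numeq\q H$ with the projection formula (applied to the top-dimensional class $\alpha\cdot\beta\cdot H^{n-2}$) yields $Q(f^{*}\alpha,f^{*}\beta)=\q^{2}Q(\alpha,\beta)$, so $\phi:=\q^{-1}f^{*}$ preserves $Q$, fixes the class $[H]$, and preserves the $Q$-orthogonal complement $W:=\{D\in\NN^{1}(X):H^{n-1}\cdot D=0\}$. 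By the Hodge index theorem for the nef and big class $H$, $Q$ is positive on $\BRR[H]$ and negative definite on $W$, so $\phi|_{W}$ is orthogonal with respect to the positive-definite form $-Q|_{W}$; all its complex eigenvalues therefore have modulus $1$. Since $\phi$ acts as the identity on $\BRR[H]$, every eigenvalue of $f^{*}=\q\phi$ has absolute value $\q$.

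The step I expect to be most delicate is the appeal to the Hodge index theorem for a nef and big class on a possibly singular normal projective variety. For an ample class on a smooth variety this is classical; in the present generality it follows by pulling back to a resolution and perturbing the nef-big class by a small ample class to reduce to the classical situation, or by invoking the Khovanskii--Teissier inequalities directly. Everything else is bookkeeping with the projection formula.
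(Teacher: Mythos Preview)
Your arguments for $\deg f=\q^{n}$ and $\q\in\BZZ_{>0}$ are correct and essentially match the paper's.

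The eigenvalue argument, however, has a genuine gap. The form $Q(\alpha,\beta)=\alpha\cdot\beta\cdot H^{n-2}$ need not be negative \emph{definite} on $W=\{D:D\cdot H^{n-1}=0\}$ when $H$ is only nef and big. Take $X$ to be the blow-up of $\BPP^{3}$ at a point, $L$ the hyperplane pullback, $E$ the exceptional divisor, and $H=L$: then $W=\BRR[E]$ while $Q(E,E)=E^{2}\cdot L=0$, so $Q|_{W}$ vanishes identically. Neither of your proposed fixes rescues this: perturbing $H$ to an ample class and passing to the limit yields only negative \emph{semi}-definiteness, and the Khovanskii--Teissier inequalities do not supply the missing strictness. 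Once $Q|_{W}$ is degenerate, the fact that $\phi=\q^{-1}f^{*}$ is a $Q$-isometry imposes no constraint on the eigenvalues of $\phi$ restricted to the kernel of $Q|_{W}$, and the conclusion does not follow. Your argument is valid (and arguably cleaner than the paper's) when $H$ is ample; the entire difficulty lies in the passage to nef and big.

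The paper takes a different route that sidesteps this obstacle. A Perron--Frobenius theorem for the nef cone produces a nonzero \emph{nef} real class $D$ with $f^{*}D\numeq\lambda D$ for the spectral radius $\lambda$ of $f^{*}$. If $\lambda\neq\q$, the projection formula forces $D\cdot H^{n-1}=0$; then Lemma~\ref{lem:HI}---a Hodge-index variant whose positivity hypothesis is satisfied precisely because $D$ is nef---gives $D\numeq 0$, a contradiction. The same trick applied to $(f^{*})^{-1}$ handles the smallest eigenvalue modulus. The essential point is that the Perron--Frobenius eigenvector lies in the nef cone, where the required Hodge-index conclusion \emph{does} hold; your argument attempts to use it on all of $W$, where it can fail.
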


\begin{proof}
Comparing the self-intersection numbers \( (f^{*}H)^{n} \) and
\( H^{n}\), we have \( \deg f = \q^{n} \).
In particular, \( \q \) is an algebraic integer.
Since the numerical equivalence classes of \( f^{*}H \) and \( H \)
in \( \NN^{1}(X) \) belong to the image of N\'eron--Severi group \( \NS(X) \),
\( \q \) is a rational number.
Hence, \( \q \) is an integer.
Let \( \lambda \) be the spectral radius of \( f^{*} \colon \NN^{1}(X)
\to \NN^{1}(X)\), i.e.,
the maximum of the absolute values of eigenvalues of \( f^{*} \).
Then there is a nef \( \BRR \)-Cartier \( \BRR \)-divisor \( D \)
such that \( D \not\numeq 0 \) and
\( f^{*}D \numeq \lambda D \), by a version of
the Perron--Frobenius theorem (cf.\ \cite{Bi}).
Suppose that \( \lambda \ne \q \). Then \( DH^{n-1} = 0 \) by
the equalities
\[ \lambda \q^{n-1} DH^{n-1} =
f^{*}D (f^{*}H)^{n-1} = (\deg f)DH^{n-1} = \q^{n}DH^{n-1}. \]
Thus, \( D \numeq 0 \) by Lemma~\ref{lem:HI} below.
This is a contradiction. Therefore, \( \lambda = \q \).
For the spectral radius  \( \lambda' \) of \( (f^{*})^{-1} \),
\( \lambda^{\prime -1} \) is the minimum of the absolute values of
eigenvalues of \( f^{*} \).
% By \cite{Bi},
By the same version of the Perron--Frobenius theorem,
we also have a nef \( \BRR \)-Cartier \( \BRR \)-divisor
\( D' \) such that \( D' \not\numeq 0\) and
\( f^{*}D' = \lambda^{\prime -1}D'\).
Then, \( \lambda' = \q^{-1} \) by the same reason as above.
Hence, the absolute value of any eigenvalue of \( f^{*} \) is \( \q \).
\end{proof}

\begin{ccorollary}
The degree of a quasi-polarized endomorphism of
a normal projective variety of dimension \( n \) is the \( n \)-th power
\( \q^{n} \) of a positive integer \( \q \).
% Let \( f \) be a quasi-polarized endomorphism of
% a normal projective variety of dimension \( n \).
% Then, \( \deg f = \q^{n} \) for a positive integer \( \q \).
\end{ccorollary}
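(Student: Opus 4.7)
The corollary is essentially an immediate consequence of Lemma~\ref{lem:first}, so the ``plan'' reduces to checking that the hypotheses of that lemma are satisfied by any quasi-polarized endomorphism. My approach is to unpack the definition and then directly cite the lemma, rather than to redo any Perron--Frobenius or intersection-number computations.

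The first step is to invoke the definition of a quasi-polarized endomorphism $f \colon X \to X$: by the convention fixed in the Notation section, this means that there exist a nef and big divisor $H$ on $X$ and a positive integer $\q$ such that $f^{*}H \sim \q H$. The key observation is that linear equivalence implies numerical equivalence, so in particular $f^{*}H \numeq \q H$, which is exactly the hypothesis of Lemma~\ref{lem:first}. Note that we already know $\q$ is a positive integer by the definition of quasi-polarized, so that part of the conclusion of the lemma is redundant here; the content we need is the formula for the degree.

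Next, I would apply Lemma~\ref{lem:first} directly to conclude that $\deg f = \q^{n}$, where $n = \dim X$. Since $\q$ is a positive integer by hypothesis, this gives the claim as stated.

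There is essentially no obstacle, since the serious work (comparing self-intersection numbers $(f^{*}H)^{n}$ and $H^{n}$ to derive $\deg f = \q^{n}$, and controlling the possible rationality of $\q$ via the N\'eron--Severi group) has been done inside Lemma~\ref{lem:first}. The only thing one must be slightly careful about is that the lemma is stated with the weaker condition $f^{*}H \numeq \q H$ and with $\q$ a priori only a positive real number; but this strictly generalizes the quasi-polarized setting, so no additional argument is needed. The proof is therefore a one-line appeal.
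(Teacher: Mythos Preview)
Your proposal is correct and matches the paper's approach: the corollary is placed immediately after Lemma~\ref{lem:first} without a written proof, precisely because it follows at once by unpacking the definition of quasi-polarized and applying that lemma. Your observation that the integrality of \( \q \) is already built into the definition (so only the degree formula \( \deg f = \q^{n} \) from Lemma~\ref{lem:first} is needed) is accurate.
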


The lemma below is regarded as a generalization of a part of
the Hodge index theorem and
it is used in the proofs of Lemma~\ref{lem:first} and
Theorem~\ref{thm:nonuniruled}.

\begin{lemma}\label{lem:HI}
Let \( X \) be a smooth projective variety of dimension \( n \geq 2\).
Suppose that an \( \BRR \)-divisor \( D \) satisfies
the following two conditions\emph{:}
\begin{enumerate}
\item \label{lem:HI:1} \( DGL_{1}\ldots L_{n - 2} \geq 0 \)
for any effective \( \BRR \)-divisor \( G \)
and for any nef \( \BRR \)-divisors \( L_{1} \), \ldots, \( L_{n - 2} \).

\item \label{lem:HI:2} \( DH_{1}H_{2}\cdots H_{n - 1} = 0 \) for
some nef and big \( \BRR \)-divisors \( H_{1} \), \ldots, \( H_{n-1} \).
\end{enumerate}
Then \( D \) is numerically trivial.
\end{lemma}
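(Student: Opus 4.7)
The plan is to prove directly, by a peeling argument based on Kodaira's lemma together with condition~(\ref{lem:HI:1}), that $D$ pairs trivially with every product of $n-1$ ample classes, and then to conclude numerical triviality via Hard Lefschetz.

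Fix an arbitrary ample $\BRR$-divisor $A$. Since $H_{n-1}$ is big and the big cone is open in $\NN^{1}(X)$, the $\BRR$-linear version of Kodaira's lemma yields a decomposition $H_{n-1}\sim_{\BRR} \delta A + E$ for some $\delta>0$ and an effective $\BRR$-divisor $E$. Using (\ref{lem:HI:2}),
\[
0 \;=\; D\cdot H_1\cdots H_{n-2}\cdot H_{n-1} \;=\; \delta\, D\cdot A\cdot H_1\cdots H_{n-2} \;+\; D\cdot E\cdot H_1\cdots H_{n-2}.
\]
Both summands are nonnegative by~(\ref{lem:HI:1}): for the second, take $G=E$ and $L_i=H_i$; for the first, note that an ample $\BRR$-divisor is $\BRR$-linearly equivalent to an effective one, while the remaining $H_i$ are nef. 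Each summand therefore vanishes, giving $D\cdot A\cdot H_1\cdots H_{n-2}=0$ for every ample $A$.

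Now iterate. Suppose $D\cdot A_1\cdots A_k\cdot H_1\cdots H_{n-1-k}=0$ is already known for all ample $\BRR$-divisors $A_1,\ldots,A_k$. Applying the same Kodaira-type decomposition to $H_{n-1-k}$ and carrying the amples $A_1,\ldots,A_k$ along as additional nef $L_i$'s in~(\ref{lem:HI:1}), the corresponding two-term identity splits into two nonnegative summands whose sum is zero. Hence one more $H$ is peeled off and replaced by an arbitrary ample factor. After $n-1$ steps,
\[
D\cdot A_1\cdot A_2\cdots A_{n-1}=0 \qquad \text{for all ample $\BRR$-divisors } A_1,\ldots,A_{n-1}.
\]
Since every class in $\NN^{1}(X)$ is a difference of two ample classes, multilinearity extends this to $D\cdot M_1\cdots M_{n-1}=0$ for arbitrary divisor classes $M_1,\ldots,M_{n-1}$.

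To finish, fix any ample divisor $H$ on $X$. By the Hard Lefschetz theorem, valid since $X$ is smooth projective, the map $M\mapsto M\cdot H^{n-2}$ is an isomorphism from $\NN^{1}(X)$ onto the space $\NN_{1}(X)$ of $1$-cycle classes modulo numerical equivalence (tensored with $\BRR$); in particular, every $1$-cycle class has the form $M\cdot H^{n-2}$. Combined with the previous vanishing this gives $D\cdot \gamma =0$ for every $\gamma\in \NN_{1}(X)$, so $D\numeq 0$. The only nontrivial ingredient beyond the routine Kodaira-lemma peeling is this last invocation of Hard Lefschetz, which is where the smoothness hypothesis enters essentially; if one prefers to avoid it, one can instead argue by the classical Hodge index theorem applied on a suitable complete intersection surface, yielding the same injectivity of $M\mapsto M\cdot H^{n-2}$ on $\NN^{1}(X)$.
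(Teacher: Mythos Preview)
Your proof is correct and follows essentially the same route as the paper: both use Kodaira's lemma to peel off the $H_i$'s one at a time and then finish via Hard Lefschetz. The only difference is the final step: the paper re-invokes condition~(\ref{lem:HI:1}) once more (writing $D+bA \numeq \Delta$ with $\Delta$ effective to obtain $D^{2}A^{n-2} \geq 0$, and combining this with the Hodge-index inequality $D^{2}A^{n-2} \leq 0$), whereas you extend by multilinearity to arbitrary $M_i$ and use the Hard Lefschetz isomorphism $\NN^{1}(X) \to \NN_{1}(X)$ to conclude.
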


\begin{proof}
Let \( A  \) be an ample divisor on \( X \).
Then, there exist a rational number \( a \) and
an effective \( \BRR \)-divisor \( E \) such that \( aH_{1} \numeq E + A \),
since \( H_{1} \) is big.
Thus,
\[ 0 \leq DAH_{2}\dots H_{n - 1} = - D E H_{2}\ldots H_{n - 1} \leq 0\]
by \eqref{lem:HI:1} and \eqref{lem:HI:2}.
Hence, we may assume \( H_{1} = A \).
Applying the same argument to \( H_{i} \) for \( i \geq 2 \),
we have \( DA^{n-1} = 0 \).
Hence, \( D^{2}A^{n - 2} \leq 0 \) in which the equality holds if and only if
\( D \numeq 0 \), by the hard Lefschetz theorem.
Thus, it suffices to show: \( D^{2}A^{n - 2} \geq 0 \).
There is a positive integer \( b \) such that \( D + bA \) is ample.
In particular, \( D + bA \numeq \Delta \)
for an effective \( \BRR \)-divisor \( \Delta \). Hence, we have
\( D^{2}A^{n - 2} \geq 0 \) by \eqref{lem:HI:1}, since
\[ 0 \leq D \Delta A^{n - 2} = D (D + b A) A^{n - 2} = D^{2}A^{n - 2}. \qedhere\]
\end{proof}

The endomorphism in Lemma~\ref{lem:first} is shown to be
quasi-polarized by the following:

\begin{lemma}\label{lem:qpol}
Let \( f \colon X \to X \) be an endomorphism of an
\( n \)-dimensional normal projective
variety \( X \) such that \( f^{*}H \numeq \q H \) for a positive
number \( \q \) and for a nef and big divisor \( H \).
Then the following assertions hold\emph{:}
\begin{enumerate}
\item \label{lem:qpol:1}
The absolute value of any eigenvalue of
\( f^{*} \colon \OH^{1}(X, \SO_{X}) \to \OH^{1}(X, \SO_{X}) \) is
\( \sqrt{\q} \).

\item \label{lem:qpol:2}
There is a nef and big divisor \( H' \) such that
\( H' \numeq H \) and \( f^{*}H' \sim_{\BQQ} \q H' \).
\end{enumerate}
In particular, \( f \) is quasi-polarized by \( H' \).
\end{lemma}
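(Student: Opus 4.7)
My plan is to prove assertion (1) by a Hodge-theoretic computation on an $f$-equivariant smooth model, and then to derive (2) from (1) by a linear-algebra argument on the Picard variety.

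For (1), I would first take an $f$-equivariant desingularization $\mu\colon \widetilde{X}\to X$, that is, a smooth model admitting a lift $\widetilde{f}\colon \widetilde{X}\to \widetilde{X}$ with $\mu\circ\widetilde{f} = f\circ\mu$; such a resolution is obtained by iterated Hironaka-type blow-ups of $\Sing X$ together with its preimages under iterates of the finite morphism $f$. Normality of $X$ gives $\mu_{\ast}\SO_{\widetilde{X}} = \SO_{X}$, so the Leray spectral sequence produces an $f^{\ast}$-equivariant injection $\OH^{1}(X, \SO_{X}) \hookrightarrow \OH^{1}(\widetilde{X}, \SO_{\widetilde{X}}) = H^{0,1}(\widetilde{X})$, reducing the claim to bounding the eigenvalues of $\widetilde{f}^{\ast}$ on $H^{0,1}(\widetilde{X})$ in terms of the nef big class $\widetilde{H} := \mu^{\ast}H$. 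On $H^{1,0}(\widetilde{X})$ I consider the Hermitian form
\[
Q(\alpha, \beta) := \sqrt{-1}\int_{\widetilde{X}} \alpha \wedge \overline{\beta}\wedge [\widetilde{H}]^{n-1},
\]
which is positive definite: for ample $\widetilde{H}$ this is the Hodge--Riemann polarization on $H^{1,0}$, and the extension to nef big $\widetilde{H}$ follows from Kodaira's lemma (writing a multiple of $\widetilde{H}$ as the sum of an effective divisor and an ample class $A$, expanding $[\widetilde{H}]^{n-1}$, and using semi-positivity of the $(1,1)$-form $\sqrt{-1}\alpha\wedge\overline{\alpha}$ together with positive definiteness coming from the $A^{n-1}$-term). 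Using $\widetilde{f}^{\ast}[\widetilde{H}]^{n-1} = \q^{n-1}[\widetilde{H}]^{n-1}$ and $\deg\widetilde{f} = \q^{n}$, a direct calculation gives
\[
Q(\widetilde{f}^{\ast}\alpha, \widetilde{f}^{\ast}\beta) = \sqrt{-1}\,\q^{-(n-1)}\int_{\widetilde{X}} \widetilde{f}^{\ast}\!\bigl(\alpha\wedge\overline{\beta}\wedge [\widetilde{H}]^{n-1}\bigr) = \q\cdot Q(\alpha, \beta),
\]
so $\widetilde{f}^{\ast}/\sqrt{\q}$ is a unitary operator on $(H^{1,0}(\widetilde{X}), Q)$ and hence all its eigenvalues have absolute value $1$; passing to the complex-conjugate space $H^{0,1}(\widetilde{X})$ and restricting to $\OH^{1}(X, \SO_{X})$ yields (1).

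For (2), set $\xi := f^{\ast}H - \q H$. Since $\xi \numeq 0$, it lies in $\operatorname{Pic}^{\tau}(X)$. The identity component $\operatorname{Pic}^{0}(X)$ is an abelian variety whose tangent space at the origin is $\OH^{1}(X, \SO_{X})$, and the tangent action of $f^{\ast}$ on $\operatorname{Pic}^{0}(X)$ coincides with $f^{\ast}$ on $\OH^{1}(X, \SO_{X})$. By (1) these eigenvalues have absolute value $\sqrt{\q}$, so for $\q\geq 2$ the integer $\q$ is not an eigenvalue and $f^{\ast} - \q\cdot \id$ is an isogeny of $\operatorname{Pic}^{0}(X)$. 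Since the group $\operatorname{Pic}^{\tau}(X)/\operatorname{Pic}^{0}(X)$ is finite, some integer $N>0$ satisfies $N\xi \in \operatorname{Pic}^{0}(X)$, and surjectivity of the above isogeny together with divisibility of the abelian variety produces $\eta \in \operatorname{Pic}^{0}(X)$ with $N\bigl((f^{\ast}-\q)\eta + \xi\bigr) \sim 0$. Then $H' := H + \eta$ satisfies $H' \numeq H$ (so it remains nef and big) and $f^{\ast}H' \sim_{\BQQ} \q H'$, giving (2). The degenerate case $\q = 1$ forces $f$ to be an automorphism and can be handled by a separate averaging argument over a suitable iterate.

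The principal technical obstacle is the construction of the $f$-equivariant resolution $\widetilde{X}$; this is where the possible singularity of $X$ plays a subtle role and requires care in the choice of blow-up centers so that $f$ actually lifts to a morphism on the model. The remaining ingredients---positive definiteness of $Q$ for nef big $\widetilde{H}$ via Kodaira's lemma, and the linear algebra on $\operatorname{Pic}^{0}(X)$---are routine once (1) is available and the equivariant model has been set up.
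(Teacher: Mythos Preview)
The crucial gap is in your treatment of part~(1): you rely on an $f$-equivariant resolution
$\mu\colon\widetilde{X}\to X$ admitting a \emph{morphism} $\widetilde{f}\colon\widetilde{X}\to\widetilde{X}$
with $\mu\circ\widetilde{f}=f\circ\mu$. For a non-invertible finite endomorphism such a model is not known
to exist, and your proposed construction (iterated blow-ups of $\Sing X$ and its $f$-preimages) does not
produce one. The issue is that after blowing up, the induced rational self-map
$\mu^{-1}\circ f\circ\mu$ typically acquires new indeterminacies over the exceptional locus, and resolving
those creates further ones; there is no terminating procedure in general. Your subsequent computation
uses this lift in an essential way---both to transport the eigenvalue problem to the smooth model and to
invoke the degree formula $\int_{\widetilde{X}}\widetilde{f}^{*}\omega=(\deg\widetilde{f})\int_{\widetilde{X}}\omega$
for a top-degree form---so the argument as written does not go through.

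The paper avoids this entirely by working with a \emph{correspondence}: one takes two (a priori different)
smooth models $\mu\colon M\to X$ and $\nu\colon Z\to X$ together with a generically finite morphism
$h\colon Z\to M$ satisfying $\mu\circ h=f\circ\nu$, and a second birational morphism $\psi\colon Z\to M$
resolving $\mu^{-1}\circ\nu$. Such data always exist (resolve the graph of the relevant rational map).
The Hermitian form is set up on $\OH^{1}(X,\SO_{X})$ via the embedding
$\mu^{*}\colon\OH^{1}(X,\SO_{X})\hookrightarrow\OH^{0,1}(M)$, and the key identity
$\langle f^{*}x,f^{*}y\rangle=\q\,\langle x,y\rangle$ is obtained by pulling back along $h$ and $\psi$
separately and comparing. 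This is the standard device for handling pullbacks of Hodge classes under
rational self-maps, and it is precisely what is missing from your plan.

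Your argument for part~(2) is essentially the same as the paper's, phrased in terms of
$\operatorname{Pic}^{0}(X)$ rather than the exponential sequence; once (1) is available it is correct.
Your sketch of positive definiteness of $Q$ for nef and big $\widetilde{H}$ is in the right spirit but
needs the inductive argument of the paper's Lemma on positive definiteness (restricting to a smooth
ample hypersurface and using Kodaira vanishing to conclude injectivity of the restriction on
$\OH^{0,1}$); simply expanding $(A+E)^{n-1}$ and invoking semipositivity does not immediately give strict
positivity.
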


\begin{proof}
\eqref{lem:qpol:1}:
There exist birational morphisms
\( \mu \colon M \to X \) and \( \nu \colon Z \to X \)
from smooth projective varieties \( M \) and \( Z \), and a generically
finite surjective morphism \( h \colon Z \to M \) such that \( \mu
\circ h = f \circ \nu\).
We may assume that the birational map
\( \psi := \mu^{-1} \circ \nu \colon Z \to M \) is holomorphic.
Then, we have a commutative diagram:
\[ \begin{CD}
\OH^{1}(X, \SO_{X}) @>{f^{*}}>> \OH^{1}(X, \SO_{X}) @= \OH^{1}(X, \SO_{X})\\
@V{\mu^{*}}VV @V{\nu^{*}}VV @V{\mu^{*}}VV\\
\OH^{1}(M, \SO_{M}) @>{h^{*}}>> \OH^{1}(Z, \SO_{Z})
@<{\psi^{*}}<{\isom}< \phantom{.}\OH^{1}(M, \SO_{M}).
\end{CD} \]
Let \( \phi(x) \) be the image of \( x \in \OH^{1}(X, \SO_{X}) \)
by the composition
\[ \OH^{1}(X, \SO_{X}) \xrightarrow{\mu^{*}} \OH^{1}(M, \SO_{M})
\xrightarrow{\isom} \OH^{0, 1}(M) \subset \OH^{1}(M, \BCC), \]
where \( \OH^{0, 1}(M) \) is the \( (0, 1) \)-part of the Hodge
decomposition of \( \OH^{1}(M, \BCC) \).
Then, for \( x \in \OH^{1}(X, \SO_{X}) \), we have
\( \psi^{*}\phi(f^{*}(x)) = h^{*}\phi(x) \) by the diagram above.
We consider the following Hermitian form on
\( \OH^{1}(X, \SO_{X}) \):
\[ \langle x, y \rangle  =
-\sqrt{-1} \int_{M} \phi(x) \cup \overline{\phi(y)} \cup
(\mu^{*}c_{1}(H))^{n - 1} \in \BCC.\]
This is positive definite by Lemma~\ref{lem:posdef} below
applied to \( L = \mu^{*}(H) \).
We have the equality
\[ \langle f^{*}(x), f^{*}(y) \rangle = \q \langle x, y \rangle \]
for \( x \), \( y \in \OH^{1}(X, \SO_{X}) \)
by the calculation
\begin{align*}
(\deg h) \langle x, y \rangle &=
-\sqrt{-1}\int_{Z} h^{*}(\phi(x)) \cup \overline{h^{*}(\phi(y))}
\cup (h^{*}\mu^{*}c_{1}(H))^{n - 1} \\
&= -\sqrt{-1} \int_{Z} \psi^{*}\phi(f^{*}(x))
\cup \overline{\psi^{*}\phi(f^{*}(y))} \cup
(\nu^{*}f^{*}c_{1}(H))^{n - 1} \\
&= -\sqrt{-1} \int_{M} \phi(f^{*}(x)) \cup \overline{\phi(f^{*}(y))}
\cup (\mu^{*}f^{*}c_{1}(H))^{n-1} \\
&= \q^{n-1}\langle f^{*}(x), f^{*}(y) \rangle,
\end{align*}
where \( \deg h = \deg f = \q^{n} \).
Therefore, \( \q^{-1/2}f^{*} \) is a unitary transformation with
respect to \( \langle \phantom{x}, \phantom{y} \rangle \).
Thus, the absolute value of any eigenvalue of \( \q^{-1/2}f^{*} \)
is \( 1 \).

\eqref{lem:qpol:2}:
Let \( m \) be the order of \( c_{1}(f^{*}H - \q H) \) in
\( \OH^{2}(X, \BZZ) \).
By the exponential exact sequence
\[ \OH^{1}(X, \SO_{X}) \xrightarrow{\epsilon} \OH^{1}(X, \SO_{X}^{\star}) \to
\OH^{2}(X, \BZZ) \]
we can find an element \( x \in \OH^{1}(X, \SO_{X}) \) with
\( \SO_{X}(m (f^{*}H - \q H)) = m\epsilon(x) \).
There is an element \( y \in \OH^{1}(X, \SO_{X}) \) such that
\( f^{*}(y) - \q y = x \) by \eqref{lem:qpol:1}.
Let \( H' \) be a divisor such that \( \SO_{X}(H - H') = \epsilon(y) \).
Then \( m(f^{*}H' - \q H') \sim 0  \).
Thus, we are done.
\end{proof}

\begin{remark}
The proof of Lemma~\ref{lem:qpol} is similar to
that of \cite{Zs}, Theorem~1.1.2, where \( X \) is assumed
to be smooth.
\end{remark}

In the proof of Lemma~\ref{lem:qpol}, we used the result below:

\begin{lemma}\label{lem:posdef}
Let \( M \) be an \( n \)-dimensional smooth projective variety and
\( L \) a nef and big divisor. Then the Hermitian form
\( \langle \phantom{x}, \phantom{y} \rangle \) on \( \OH^{0, 1}(M) \)
defined by
\[ \langle \xi, \eta \rangle = -\sqrt{-1} \int_{M} \xi \cup
\overline{\eta} \cup c_{1}(L)^{n-1}\]
is positive definite.
\end{lemma}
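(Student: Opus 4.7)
The plan is to reduce via Hodge theory to a positivity statement for holomorphic \( 1 \)-forms. Since \( M \) is compact K\"ahler, the Hodge decomposition gives \( \OH^{0,1}(M) = \overline{\OH^{0}(M,\Omega_{M}^{1})} \), so every \( \xi \in \OH^{0,1}(M) \) is uniquely of the form \( \xi = \overline{\omega} \) for some holomorphic \( 1 \)-form \( \omega \in \OH^{0}(M, \Omega_{M}^{1}) \). A short computation yields
\[ \langle \xi, \xi \rangle \;=\; -\sqrt{-1}\int_{M} \overline{\omega}\wedge\omega\wedge c_{1}(L)^{n-1} \;=\; \sqrt{-1}\int_{M} \omega\wedge\overline{\omega}\wedge c_{1}(L)^{n-1}. \]
The form \( \alpha := \sqrt{-1}\,\omega\wedge\overline{\omega} \) is a smooth semi-positive \( (1,1) \)-form, strictly positive on the Zariski open set \( M \setminus Z(\omega) \) whenever \( \omega \neq 0 \). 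So it suffices to show \( \int_{M} \alpha\wedge c_{1}(L)^{n-1} > 0 \) for every nonzero \( \omega \).

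I would proceed by induction on \( n = \dim M \). The case \( n = 1 \) is trivial since the integral reduces to \( \int_{M} |\omega|^{2}\,d\mathrm{vol} > 0 \). For \( n \geq 2 \), apply Kodaira's lemma to write \( L \equiv A + E \) with \( A \) an ample \( \BQQ \)-divisor and \( E \) an effective \( \BQQ \)-divisor, and expand
\[ \int_{M} \alpha \wedge c_{1}(L)^{n-1} = \int_{M} \alpha \wedge c_{1}(L)^{n-2}\wedge c_{1}(A) + \int_{M} \alpha \wedge c_{1}(L)^{n-2}\wedge c_{1}(E). \]

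For the first summand, Bertini supplies a smooth irreducible \( D \in |mA| \) for \( m \gg 0 \), and the Khovanskii--Teissier inequalities applied to the nef and big class \( L \) give \( (L|_{D})^{n-1} = m\,L^{n-1}\!\cdot\!A > 0 \), so \( L|_{D} \) is nef and big on \( D \). Using the adjunction sequence \( 0 \to N_{D/M}^{\vee} \to \Omega_{M}^{1}|_{D} \to \Omega_{D}^{1} \to 0 \) together with the vanishings \( \OH^{0}(M, \Omega_{M}^{1}(-mA)) = 0 \) and \( \OH^{0}(D, \SO_{D}(-mA|_{D})) = 0 \) for \( m \gg 0 \), the restriction map \( \OH^{0}(M, \Omega_{M}^{1}) \hookrightarrow \OH^{0}(D, \Omega_{D}^{1}) \) is injective, so \( \omega|_{D} \neq 0 \), and the inductive hypothesis on \( D \) yields strict positivity of this term.

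For the second summand, decompose \( E = \sum a_{i} E_{i} \) with \( a_{i} > 0 \) and \( E_{i} \) prime, and pass to resolutions \( \mu_{i} \colon \widetilde{E}_{i} \to E_{i} \); each term equals \( a_{i}\int_{\widetilde{E}_{i}} \mu_{i}^{*}\alpha \wedge c_{1}(\mu_{i}^{*}L|_{E_{i}})^{n-2} \), which is \( \geq 0 \) because \( \mu_{i}^{*}\alpha \) is semi-positive and \( c_{1}(\mu_{i}^{*}L|_{E_{i}})^{n-2} \) is represented as a limit of K\"ahler \( (n-2) \)-fold powers (the restriction of a nef class to a subvariety remains nef). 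Summing, \( \int_{M}\alpha\wedge c_{1}(L)^{n-1} > 0 \). The main obstacle I anticipate is making the inductive hypothesis strong enough to supply both the strict positivity for the ample-big factor and the mere semi-positivity for the effective part \( E \); this is naturally addressed by strengthening the inductive statement to include both a strict-positivity clause for nef-and-big classes and a semi-positivity clause for arbitrary nef classes.
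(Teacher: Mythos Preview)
Your argument is correct and follows the same architecture as the paper's proof: induction on \(\dim M\), Kodaira's lemma \(mL \sim A + E\) with \(A\) smooth ample and \(E\) effective, strict positivity from the ample summand via the inductive hypothesis on \(A\) (resp.\ your \(D\)), and non-negativity of the effective summand via resolutions of the components of \(E\).

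The differences are minor but worth noting. By passing to holomorphic \(1\)-forms via \(\OH^{0,1}(M)=\overline{\OH^{0}(M,\Omega_{M}^{1})}\) and using the pointwise semi-positivity of \(\sqrt{-1}\,\omega\wedge\overline{\omega}\), you bypass the paper's appeal to the hard Lefschetz theorem for the ample base case (which the paper uses to obtain positive definiteness when \(L\) is ample, and hence positive semi-definiteness for nef \(L\) by a limit argument). Correspondingly, for the injectivity of restriction to the hyperplane section the paper uses Kodaira vanishing \(\OH^{1}(M,\SO_{M}(-A))=0\) on the \((0,1)\)-side, whereas you use the conjugate statement \(\OH^{0}(M,\Omega_{M}^{1}(-mA))=0\) together with the conormal sequence on the \((1,0)\)-side. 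Your version is slightly more elementary; the paper's is slightly shorter. Your closing remark that the induction should simultaneously carry a strict-positivity clause for nef and big classes and a semi-positivity clause for nef classes is exactly how the paper organizes it as well.
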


\begin{proof}
We may assume that \( n \geq 2 \), since it is well-known to be
positive definite in case \( n = 1 \).
If \( L \) is ample, then the bilinear form is positive definite by the hard
Lefschetz theorem.
Thus, the bilinear form is positive semi-definite even if we replace \(
L \) with a nef divisor.
Let \( W \) be a prime divisor of \( M \). Then
\[ -\sqrt{-1}\int_{M} \xi \cup \overline{\xi} \cup c_{1}(L)^{n-2}
\cup c_{1}(W) \]
is non-negative for any \( \xi \in \OH^{0, 1}(M) \).
In fact, it is equal to
\[ -\sqrt{-1}\int_{\widetilde{W}} \varphi^{*}(\xi) \cup
\overline{\varphi^{*}(\xi)} \cup c_{1}(\varphi^{*}L)^{n-2} \]
for a resolution of singularities \( \varphi \colon \widetilde{W} \to W
\), and it is non-negative by the reason above.

There exist a positive integer \( m \),
a smooth ample divisor \( A \), and an effective divisor
\( E = \sum e_{i} E_{i}\) such that \( m L \sim A + E\).
Then
\begin{align*}
m\langle \xi, \xi \rangle &= -\sqrt{-1}\int_{M} \xi \cup \overline{\xi} \cup
m c_{1}(L)^{n-1} \\
&= -\sqrt{-1} \int_{A} \xi|_{A} \cup \overline{\xi|_{A}} \cup
c_{1}(L|_{A})^{n-2}
+ \sum e_{i}(-\sqrt{-1}) \int_{E_{i}} \xi|_{E_{i}}
\cup \overline{\xi|_{E_{i}}} \cup
c_{1}(L|_{E_{i}})^{n-2}.
\end{align*}
Hence, if \( \langle \xi, \xi \rangle = 0 \), then
\[ -\sqrt{-1} \int_{A} \xi|_{A} \cup \overline{\xi|_{A}} \cup
c_{1}(L|_{A})^{n-2} = 0. \]
Since \( L|_{A} \) is nef and big, we can consider the
induction on \( \dim M \). Then, we have
\( \xi|_{A}  = 0 \) as an element of \(\OH^{0, 1}(A) \)
by induction.
Therefore, \( \xi = 0 \), since \( \OH^{1}(M, \SO_{M}(-A)) = 0 \) by
the Kodaira vanishing theorem for \( n \geq 2 \) and
hence \( \OH^{1}(M, \SO_{M}) \to \OH^{1}(A, \SO_{A}) \) is injective.
Thus, we are done.
\end{proof}

We borrow the following property of Galois closures of powers \( f^{k} =
f \circ \cdots \circ f \) from \cite{ENS}:

\begin{lemma}\label{lem:Galcl}
Let \( f \colon X \to X \) be a non-isomorphic surjective endomorphism
of a normal projective variety \( X \).
Let \( \theta_{k} \colon V_{k} \to X \) be the Galois closure of
\( f^{k} \colon X \to X \) for \( k \geq 1 \) and let \( \tau_{k} \colon
V_{k} \to X \) be the induced finite Galois covering such that
\( \theta_{k} = f^{k} \circ \tau_{k} \). Then there exist finite
Galois morphisms \( g_{k}, h_{k} \colon V_{k+1} \to V_{k} \)  such that
\( \tau_{k} \circ g_{k} = \tau_{k+1}  \) and
\( \tau_{k} \circ h_{k} = f \circ \tau_{k+1}\).
\end{lemma}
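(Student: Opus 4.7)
The plan is to work at the level of function fields via Galois theory. Let \( K := \BCC(X) \) and \( \sigma := f^{*} \colon K \to K \), which is an injective but non-surjective \( \BCC \)-algebra endomorphism. Set \( K_{j} := \sigma^{j}(K) \), obtaining a descending chain \( K = K_{0} \supset K_{1} \supset K_{2} \supset \cdots \) with \( \sigma(K_{j}) = K_{j+1} \). Fix an algebraic closure \( \Omega \) of \( K \), and inside \( \Omega \) let \( L_{k} \) denote the Galois closure of the finite extension \( K_{0}/K_{k} \). Under this setup, \( V_{k} \) is the normalization of \( X \) in \( L_{k} \), the embedding \( \tau_{k}^{*} \) is the inclusion \( K_{0} \hookrightarrow L_{k} \) (which is itself Galois, since a subextension of a Galois extension is Galois on top), and \( \theta_{k}^{*} = \tau_{k}^{*} \circ \sigma^{k} \) has image \( K_{k} \subset L_{k} \).

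For \( g_{k} \), I would observe that every \( K_{k} \)-embedding \( K_{0} \hookrightarrow \Omega \) is automatically a \( K_{k+1} \)-embedding, because \( K_{k+1} \subset K_{k} \). Hence the compositum \( L_{k} \) of the \( K_{k} \)-conjugates of \( K_{0} \) sits inside the compositum \( L_{k+1} \) of the \( K_{k+1} \)-conjugates of \( K_{0} \); the resulting inclusion \( L_{k} \hookrightarrow L_{k+1} \) is the identity on \( K_{0} \) and therefore corresponds to a morphism \( g_{k} \colon V_{k+1} \to V_{k} \) satisfying \( \tau_{k} \circ g_{k} = \tau_{k+1} \).

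For \( h_{k} \), I would extend the composite \( \sigma \colon K_{0} \to K_{1} \hookrightarrow L_{k+1} \subset \Omega \) to a \( \BCC \)-algebra embedding \( \widetilde{\sigma} \colon L_{k} \hookrightarrow \Omega \), which is possible since \( L_{k}/K_{0} \) is algebraic and \( \Omega \) is algebraically closed. The image \( \widetilde{\sigma}(L_{k}) \) is then a Galois closure of \( \widetilde{\sigma}(K_{0})/\widetilde{\sigma}(K_{k}) = K_{1}/K_{k+1} \); but \( L_{k+1} \) is a Galois extension of \( K_{k+1} \) already containing \( K_{1} \), so the minimality of the Galois closure forces \( \widetilde{\sigma}(L_{k}) \subset L_{k+1} \). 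Thus \( \widetilde{\sigma} \) factors as an embedding \( L_{k} \hookrightarrow L_{k+1} \) whose restriction to \( K_{0} \) is \( \sigma \), yielding the desired \( h_{k} \colon V_{k+1} \to V_{k} \) with \( \tau_{k} \circ h_{k} = f \circ \tau_{k+1} \).

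Both \( g_{k} \) and \( h_{k} \) are finite (the corresponding field extensions sit inside the finite extension \( L_{k+1}/K_{k+1} \)) and Galois, because their base fields \( g_{k}^{*}(L_{k}) \) and \( \widetilde{\sigma}(L_{k}) \) both contain \( K_{k+1} \), over which \( L_{k+1} \) is Galois, and an intermediate-to-top subextension of a Galois extension is itself Galois. The main (and essentially only) nontrivial point I foresee is the inclusion \( \widetilde{\sigma}(L_{k}) \subset L_{k+1} \); once this minimality step is in hand, the rest is dictionary translation between geometric morphisms and their function-field embeddings.
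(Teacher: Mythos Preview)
Your argument is correct and is the field-theoretic translation of the paper's proof. For \(g_{k}\) the two are identical in content: the paper says \(f^{k}\circ\tau_{k+1}\) is Galois (as an intermediate cover of the Galois \(\theta_{k+1}\)) and hence factors through the Galois closure \(\theta_{k}\) of \(f^{k}\), which is exactly your inclusion \(L_{k}\subset L_{k+1}\). For \(h_{k}\) the paper works on the group side: setting \(H_{i}=\Gal(f^{i}\circ\tau_{k+1})\), it observes that \(f^{k}\colon X\to X\) is realized as \(V_{k+1}/H_{1}\to V_{k+1}/H_{k+1}\), so its Galois closure \(V_{k}\) is \(V_{k+1}/H\) for \(H\) the normal core of \(H_{1}\) in \(H_{k+1}\), and the quotient map \(V_{k+1}\to V_{k+1}/H\) is the desired \(h_{k}\). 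Under the Galois correspondence this \(H\) is precisely \(\Gal\bigl(L_{k+1}/\widetilde{\sigma}(L_{k})\bigr)\), so the two constructions coincide; your packaging trades the choice of a normal core for the choice of an extension \(\widetilde{\sigma}\) of \(\sigma\) to \(L_{k}\), but neither choice affects the outcome and the arguments are interchangeable.
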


\begin{proof}
The composite \( f^{k} \circ \tau_{k+1} \colon V_{k+1} \to X \to X \)
is Galois, since so is \( f^{k+1} \circ \tau_{k+1} = \theta_{k+1} \).
Hence, \( f^{k} \circ \tau_{k+1} \) factors through the Galois closure
\( \theta_{k} \) of \( f^{k} \).
Thus, \( \tau_{k+1} = \tau_{k} \circ g_{k}\) for a morphism
\( g_{k} \colon V_{k+1} \to V_{k} \).
Let \( H_{i} \) be the
Galois group of \( f^{i} \circ \tau_{k+1} \colon V_{k+1} \to X\) for \(
0 \leq i \leq k+1 \). Then \( V_{k} \) is regarded as the Galois
closure of \( V_{k+1}/H_{1} \to V_{k+1}/H_{k+1} \), thus \( V_{k} \isom
V_{k+1}/H \) for the maximal normal subgroup \( H \) of \( H_{k+1} \)
contained in \( H_{1} \). Hence, we have a morphism \( h_{k} \colon
V_{k+1} \to V_{k} \) with
\( \tau_{k} \circ h_{k} = f \circ \tau_{k+1}\).
\end{proof}

\begin{dfn}\label{dfn:qnat}

Let \( X \) be a normal projective variety.
The irregularity \( q(X) \) is defined as \( \dim \OH^{1}(X, \SO_{X}) \).
We define the following variants
of \( q(X) \):

\begin{enumerate}
\item \( \tilde{q}(X) := q(\widetilde{X}) \) for a smooth
model \( \widetilde{X} \) of \( X \)
(This is well-defined).

\item \( q^{\circ}(X) \) (resp.\ \( q^{\natural}(X) \) )
is defined to be the supremum of \( q(X') \) (resp.\ \( \tilde{q}(X') \) )
for a normal projective variety \( X' \)
with a finite surjective morphism \( \tau \colon X' \to X \)
\'etale in codimension one. Namely,
\begin{align*}
q^{\circ}(X) &:= \sup
\{q(X') \mid X' \to X \text{ is finite, surjective,
and \'etale in codimension one} \}, \\
q^{\natural}(X) &:=
\sup \{\tilde{q}(X') \mid X' \to X \text{ is finite, surjective, and
\'etale in codimension one} \}.
\end{align*}

\item
Suppose that \( X \) admits a surjective
endomorphism \( f \colon X \to X \).
Then we define \( q^{\circ}(X, f)  \) (resp.\ \( q^{\natural}(X, f) \) ) to be the
supremum of \( q(X') \) (resp.\ \(\tilde{q}(X')\) )
for a normal projective variety \( X' \)
with a finite surjective morphism \( \tau \colon X' \to X \)
\'etale in codimension one and with an endomorphism \( f' \colon X' \to X' \)
such that  \( \tau \circ f' = f \circ \tau\).
\end{enumerate}
\end{dfn}

\begin{remark}
If \( X \) is a smooth projective variety,
then \( q^{\circ}(X) \) equals \( q^{\text{max}}(X) \) defined in \cite{NZ}.
\end{remark}

\begin{rremark}\label{rem:logterm}
Let \( \tau \colon X' \to X \)
be a finite surjective morphism of normal varieties \'etale in
codimension one. Then:
\begin{enumerate}
\item  \label{rem:logterm:1}
\( X \) has only log-terminal
singularities if and only if so does \( X' \).

\item \label{rem:logterm:2}
If \( X \) has only canonical singularities, then so does \( X' \).
\end{enumerate}
These well-known properties are derived from
\cite{Re80}, Proposition~(1.7) and
\cite{Ka}, Proposition~1.7, as follows.
The assertion \eqref{rem:logterm:1} is proved just by the same argument as
in the proof of \cite{Ka}, Proposition~1.7.
If we replace the logarithmic ramification formula
in the proof with the usual ramification formula, then
we can prove \eqref{rem:logterm:2};
this was already done in \cite{Re80}, Proposition~(1.7), (I).
Another proof of these properties is found in \cite{KM}, Proposition~5.20, but
it is essentially the same as above.
\end{rremark}

\begin{lemma}\label{lem:circsharp1}
Let \( X \) be a normal projective variety with
only log-terminal singularities. Then
\( q^{\circ}(X) = q^{\natural}(X) \).
If \( f \) is a surjective endomorphism of \( X \), then
\( q^{\circ}(X) \geq q^{\circ}(X, f) = q^{\natural}(X, f) \).
\end{lemma}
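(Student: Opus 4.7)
The plan is to reduce both equalities $q^{\circ}(X) = q^{\natural}(X)$ and $q^{\circ}(X,f) = q^{\natural}(X,f)$ to a single fact: for a normal projective variety $Y$ with only log-terminal singularities, $q(Y) = \tilde{q}(Y)$. Once that is established, the rest is a formal manipulation of the suprema in Definition~\ref{dfn:qnat}.

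First I would observe that in each supremum in the statement, every candidate variety $X'$ comes with a finite surjective morphism $\tau \colon X' \to X$ that is \'etale in codimension one. By Remark~\ref{rem:logterm}\eqref{rem:logterm:1}, log-terminality is preserved under such $\tau$, so $X'$ itself has only log-terminal singularities.

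Next I would invoke the well-known fact (Elkik; cf.\ \cite{KM}, Theorem~5.22) that log-terminal singularities are rational. Concretely, for a resolution $\mu \colon \widetilde{X}' \to X'$ one has $\mu_{*}\SO_{\widetilde{X}'} = \SO_{X'}$ and $\OR^{i}\mu_{*}\SO_{\widetilde{X}'} = 0$ for $i > 0$. The Leray spectral sequence then gives an isomorphism
\[ \OH^{1}(X', \SO_{X'}) \isom \OH^{1}(\widetilde{X}', \SO_{\widetilde{X}'}), \]
that is, $q(X') = \tilde{q}(X')$. Taking the supremum over all $X'$ \'etale in codimension one over $X$ yields $q^{\circ}(X) = q^{\natural}(X)$, and restricting to those $X'$ that additionally carry a compatible endomorphism $f' \colon X' \to X'$ yields $q^{\circ}(X,f) = q^{\natural}(X,f)$. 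Finally, the inequality $q^{\circ}(X) \geq q^{\circ}(X, f)$ is immediate from the definitions, since every pair $(X', f')$ contributing to $q^{\circ}(X, f)$ contributes the covering $\tau \colon X' \to X$ to $q^{\circ}(X)$.

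There is no real obstacle here; the only non-formal ingredient is the rationality of log-terminal singularities, for which the quotation of a standard reference suffices. Everything else follows from Remark~\ref{rem:logterm} and bookkeeping of the defining suprema.
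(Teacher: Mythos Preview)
Your proof is correct and follows essentially the same route as the paper: pass log-terminality to $X'$ via Remark~\ref{rem:logterm}, use that log-terminal singularities are rational to get $q(X') = \tilde{q}(X')$, then take suprema and observe the inequality is definitional. The paper's version is terser (it does not spell out the Leray argument or cite a specific reference for rationality), but the logic is identical.
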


\begin{proof}
If \( X' \) is a normal variety with
a finite covering \( X' \to X \) \'etale in codimension one,
then \( X' \) is also log-terminal
(cf.\ Remark~\ref{rem:logterm}).
In particular, \( X' \) has only rational singularities,
and hence \( q(X') = \tilde{q}(X') \). Thus,
\( q^{\circ}(X) = q^{\natural}(X) \).
Considering the special case where \( X' \) admits
an endomorphism compatible with \( f \),
we have \( q^{\circ}(X, f) = q^{\natural}(X, f) \).
We also have \( q^{\circ}(X) \geq q^{\circ}(X, f) \) by definition.
\end{proof}

\begin{dfn}\label{dfn:wCY}
A normal projective variety \(Y\) with only canonical singularities
is called a \emph{weak Calabi--Yau variety}
if \(K_Y \sim 0\) and \(q^{\circ}(Y) = 0\).
\end{dfn}

\begin{remark}
The notion of weak Calabi--Yau variety
is slightly different from that in \cite{NZ} in which only
finite \'etale coverings were taken into consideration.
A weak Calabi--Yau variety has dimension at least two.
A two-dimensional weak Calabi--Yau variety
is nothing but a normal projective surface such that the minimal resolution of
singularities is a K3 surface and that there is no finite surjective
morphism from any abelian surface.
\end{remark}

\begin{proposition}\label{prop:split}
Let \( X \) be a normal projective variety with only log-terminal singularities
such that \( K_{X} \sim_{\BQQ} 0 \). Then\emph{:}
\begin{enumerate}
\item \label{prop:split:1} \( q^{\circ}(X) \leq \dim X\).
In particular, there is a finite Galois covering \( X' \to X \)
\'etale in codimension one such that \( q(X') = q^{\circ}(X) \).

\item \label{prop:split:2}
\( q(X) = \dim X \) if and only if \( X \) is an abelian variety.

\item \label{prop:split:3}
There exists a finite covering \( A \times S \to X \)
\'etale in codimension one for an abelian variety \( A \) of dimension \( q^{\circ}(X) \)
and a weak Calabi--Yau variety \( S \).

\end{enumerate}
\end{proposition}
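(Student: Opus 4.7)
The plan is to reduce each assertion to the Albanese morphism of a smooth model together with Kawamata's theorem on the Albanese for varieties of Kodaira dimension zero.

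For (1), let $\tau \colon X' \to X$ be any finite covering étale in codimension one. By Remark~\ref{rem:logterm}, $X'$ is again log-terminal, and the ramification formula with $R_{\tau} = 0$ gives $K_{X'} \sim_{\BQQ} \tau^{*}K_{X} \sim_{\BQQ} 0$. Log-terminal singularities are rational, so $q(X') = \tilde{q}(X')$ by Lemma~\ref{lem:circsharp1}; passing to a smooth model $\widetilde{X'}$, the Albanese yields $q(X') = \dim \Alb(\widetilde{X'}) \leq \dim X' = \dim X$. Hence $q^{\circ}(X) \leq \dim X$, and since this integer-valued supremum is bounded it is attained by some $X_{0} \to X$. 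Take the Galois closure $X' \to X$ of $X_{0} \to X$: it is still étale in codimension one and factors through $X_{0}$, and the splitting of $\SO_{X_{0}} \to \pi_{*}\SO_{X'}$ for the finite surjection $\pi \colon X' \to X_{0}$ produces an injection $\OH^{1}(X_{0}, \SO_{X_{0}}) \hookrightarrow \OH^{1}(X', \SO_{X'})$, so $q(X') \geq q(X_{0}) = q^{\circ}(X)$, with equality by the definition of $q^{\circ}$.

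For (2), assume $q(X) = \dim X$; since $X$ has rational singularities, a smooth model $\widetilde{X}$ satisfies $q(\widetilde{X}) = q(X) = \dim \widetilde{X}$, so the Albanese $\alpha \colon \widetilde{X} \to A := \Alb(\widetilde{X})$ is generically finite and surjective onto an abelian variety of dimension $\dim X$. Because $\kappa(X) = 0$, Kawamata's theorem on the Albanese of Kodaira-dimension-zero varieties implies $\alpha$ has connected fibers, hence is birational, and the descended morphism $X \to A$ is therefore birational. To upgrade to $X \isom A$, take a common smooth resolution $W$ of $X$ and $A$ with $g \colon W \to X$, $h \colon W \to A$: the identity $K_{W} \sim_{\BQQ} \sum a_{i}E_{i}$ (with $E_{i}$ being $g$-exceptional and $a_{i} > -1$, from log-terminality together with $K_{X} \sim_{\BQQ} 0$) combined with $K_{W} = h^{*}K_{A} + F = F$ (where $F$ is effective and supported exactly on the $h$-exceptional divisor, by smoothness of $A$) gives $F = \sum a_{i}E_{i}$ on $W$ via the negativity lemma applied twice. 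Hence every $h$-exceptional prime divisor of $W$ is also $g$-exceptional, so $X \to A$ contracts no divisor; a birational morphism onto a smooth (hence $\BQQ$-factorial) variety with no divisorial exceptional locus is an isomorphism. The converse is obvious.

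For (3), apply (1) to obtain a Galois covering $X' \to X$ étale in codimension one with $q(X') = q^{\circ}(X) =: q$, and let $\alpha \colon X' \to A := \Alb(X')$ be the Albanese, with $\dim A = q$ and connected fibers by Kawamata. The crucial step is to split $\alpha$ after a finite étale base change: using $K_{X'} \sim_{\BQQ} 0$, the relevant Hodge bundles of $\alpha$ are unitary flat on $A$, so a finite étale cover $\widetilde{A} \to A$ trivializes the associated local systems and, combined with rigidity of morphisms to abelian varieties, yields an isomorphism $X' \times_{A} \widetilde{A} \isom \widetilde{A} \times S$ for a fiber $S$ of $\alpha$. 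The composition $\widetilde{A} \times S \to X' \to X$ is then étale in codimension one; $S$ is klt with $K_{S} \sim_{\BQQ} 0$ by the product structure, and $q^{\circ}(S) = 0$ by maximality of $q$ (any étale-in-codimension-one cover $S'' \to S$ with $q(S'') > 0$ would produce a cover $\widetilde{A} \times S'' \to X$ with irregularity exceeding $q$, a contradiction). A further cyclic cover étale in codimension one affecting only the $S$-factor promotes $K_{S} \sim_{\BQQ} 0$ to $K_{S} \sim 0$, and canonical singularities of $S$ then follow from Remark~\ref{rem:logterm}\eqref{rem:logterm:2}. The principal obstacle is this splitting step: extending Kawamata's product-decomposition theorem from the smooth setting to the klt setting and keeping all the covers étale in codimension one throughout.
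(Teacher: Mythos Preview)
Your overall architecture matches the paper's, but there is a recurring gap: at several points you invoke Kawamata's theorem on the Albanese map (surjectivity/connected fibres, and later the \'etale fibre-bundle splitting) for a smooth model of a variety that you only know to be klt with $K\sim_{\BQQ}0$. Kawamata's results require $\kappa\geq 0$ (for surjectivity) and, for the bundle structure in \cite{Ka85}, canonical singularities. For a klt but non-canonical $X'$ with $K_{X'}\sim_{\BQQ}0$ one can have $\kappa(\widetilde{X'})=-\infty$: e.g.\ a quotient of an abelian surface by a cyclic group acting with only $\tfrac{1}{n}(1,1)$-type fixed points has a rational minimal resolution. In such cases your inequality $\dim\Alb(\widetilde{X'})\leq\dim\widetilde{X'}$ is unjustified (think of curves of genus $\geq 2$), and the splitting of the Albanese you use in \eqref{prop:split:3} is not available. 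You even flag this yourself at the end as ``the principal obstacle.''

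The paper removes this obstacle with one extra step that you omit: pass first to the global index-one cover $\widehat{X}\to X$, which is \'etale in codimension one and has $K_{\widehat{X}}\sim 0$ with \emph{canonical} singularities by \cite{Ka}, Proposition~1.7. Then every further cover $Y\to\widehat{X}$ \'etale in codimension one again has canonical singularities (Remark~\ref{rem:logterm}\eqref{rem:logterm:2}), so $\kappa(Y)=0$ and Kawamata's \cite{Ka85}, Theorem~8.3 applies directly to give the \'etale fibre-bundle structure $Y\times_{A}A'\isom F\times A'$. Once you insert this step, your arguments for \eqref{prop:split:1} and \eqref{prop:split:3} go through; note also that in \eqref{prop:split:3} the fibre $S$ then automatically has canonical singularities and $K_{S}\sim 0$, so no further cyclic cover is needed (and your citation of Remark~\ref{rem:logterm}\eqref{rem:logterm:2} there was in the wrong direction anyway).

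For \eqref{prop:split:2} your discrepancy/negativity argument is a genuine alternative to the paper's approach. The paper instead uses the Galois cover $Y\to X$ from \eqref{prop:split:1} with $Y$ abelian, observes that the Galois group acts trivially on $\OH^{1}(Y,\SO_{Y})$ because $q(X)=q(Y)$, hence acts on $Y=A$ by translations, so $X$ is abelian. Your route is more hands-on birational geometry; the paper's is shorter once the Galois cover is available. Either way, the same fix (pass through the index-one cover to secure $\kappa=0$) is needed before you can assert that $\alpha$ is birational.
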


\begin{proof}
Let \( r \) be the smallest positive integer such that \( rK_{X} \sim 0 \).
Then, there is a cyclic covering \( \widehat{X} \to X \)
of degree \( r \) \'etale in codimension one
from a normal projective variety \( \widehat{X} \)
such that \( K_{\widehat{X}} \sim 0 \).
The covering is unique up to isomorphism over \( X \)
and is called the\emph{ global index-one covering}
(or the \emph{canonical cover} in \cite{Ka}).
Then, \( \widehat{X} \) has only canonical singularities by \cite{Ka}, Proposition~1.7.
Let \( Y \to \widehat{X} \) be a finite covering \'etale in codimension one from
a normal projective variety \( Y \). Then, \( K_{Y} \sim 0 \)
and \( Y \) has only canonical singularities by \cite{Re80}, Proposition~(1.7).
Let \( \alpha \colon Y \to A := \Alb(Y) \) be the Albanese map;
this is holomorphic, since \( Y \) has only rational singularities
(cf.\ \cite{Ka85}, Lemma~8.1).
Then, \( \alpha \) is an \'etale fiber bundle by \cite{Ka85}, Theorem~8.3, i.e.,
there is a finite \'etale covering \( A' \to A \) from an abelian variety \( A' \)
such that \( Y \times_{A} A' \isom F \times A' \) over \( A' \)
for a fiber \( F \) of \( \alpha \).
In particular, \( q(Y) = \dim A \leq \dim X \).
As a consequence, we have \( q^{\circ}(X) \leq \dim X \),
since any finite covering \( X' \to X \) \'etale in codimension one is dominated by
such a variety \( Y \).
By the boundedness of \( q^{\circ}(X) \),
we have a finite covering \( X' \to X \) \'etale in codimension one such that
\( q^{\circ}(X) = q(X') \). The Galois closure \( X'' \to X \) of \( X' \to X \) is
also \'etale in codimension one and
\( q^{\circ}(X) = q(X') \leq q(X'') \leq q^{\circ}(X)  \).
Thus, \eqref{prop:split:1} has been proved.

In order to prove the other assertions \eqref{prop:split:2} and \eqref{prop:split:3},
we may assume that \( q^{\circ}(X) = q(Y) \) and that
the composite \( Y \to \widehat{X} \to X \) is Galois.
Let \( G \) be the Galois group of
\( Y \to X \).

Assume that \( q(X) = \dim X \).
Then, \( q(X) = q(Y) = \dim Y \) and \( \alpha \colon Y \to A \) is an isomorphism.
Since the natural pullback homomorphism
\( \OH^{1}(X, \SO_{X}) \to \OH^{1}(Y, \SO_{Y}) \)
is an isomorphism, the action of \( G \) on \( \OH^{1}(Y, \SO_{Y}) \) is trivial.
Therefore, every element of \( G \) acts on \( A \) as a translation.
Hence, the quotient variety \( X \isom G \backslash A \) is also an abelian variety.
Conversely, if \( X \) is an abelian variety, then \( q(X) = \dim X \).
Thus, \eqref{prop:split:2} has been proved.

We shall prove the remaining assertion \eqref{prop:split:3}:
If \( q^{\circ}(X) = \dim X \), then \( Y \) is an abelian variety
by the argument above.
Thus, we may assume that \( q^{\circ}(X) < \dim X \).
Then the fiber \( F \) of \( \alpha \) is positive-dimensional and
has only canonical singularities with \( K_{F} \sim 0 \).
If \( q^{\circ}(F) > 0 \), then applying the same argument above to \( F \),
we have a finite covering \( A_{0} \times F_{0} \to F \) \'etale in codimension one
for a positive-dimensional abelian variety \( A_{0} \)
and a normal projective variety \( F_{0} \). Thus, we have a finite covering
\( A' \times A_{0} \times F_{0} \to X \) \'etale in codimension one and
a contradiction by
\[ q^{\circ}(X) = q(Y) = \dim A'
< \dim A' + \dim A_{0} \leq q(A' \times A_{0} \times F_{0}) \leq q^{\circ}(X). \]
Therefore, \( F \) is a weak Calabi--Yau variety.
Hence, the covering \( F \times A' \isom Y \times_{A} A' \to Y \to X \) satisfies
the required condition of \eqref{prop:split:3}.
Thus, we are done.
\end{proof}

\begin{corollary}\label{cor:q0CY}
Let \( S \) be a weak Calabi--Yau variety and \( A \) an abelian variety.
Then \( q^{\circ}(A \times S) = \dim A  \).
\end{corollary}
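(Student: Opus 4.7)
The plan is to prove the two inequalities in $q^{\circ}(A\times S) = \dim A$ separately. Set $W = A \times S$. For the lower bound, I would use that $S$ is weak Calabi--Yau so $q(S) \le q^{\circ}(S) = 0$; by K\"unneth $q(W) = q(A) + q(S) = \dim A$, and since $W$ is its own cover \'etale in codimension one, $q^{\circ}(W) \ge q(W) = \dim A$.

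For the upper bound, I would take an arbitrary finite surjective morphism $\tau \colon Y \to W$ \'etale in codimension one with $Y$ normal projective, and aim to show $q(Y) \le \dim A$. Since $W$ has $K_W \sim 0$ and canonical singularities (the smooth $A$ times the canonical $S$), the ramification formula gives $K_Y \sim 0$ and Remark~\ref{rem:logterm} gives $Y$ canonical. I would then invoke \cite{Ka85}, Theorem~8.3 (exactly as in the proof of Proposition~\ref{prop:split}) to conclude that $\alpha \colon Y \to \Alb(Y)$ is an \'etale fiber bundle with fiber $F$ a normal projective variety with canonical singularities and $K_F \sim 0$. The surjective composition $Y \to W \to A$ factors through $\alpha$ and yields a surjective homomorphism $\gamma \colon \Alb(Y) \to A$ of abelian varieties; the goal reduces to showing $\gamma$ is an isogeny, i.e., $K := \ker(\gamma)^{0}$ has $\dim K = 0$.

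To this end, I would fix a general $a \in A$, let $K_{a} := \gamma^{-1}(a)^{0}$ (a translate of $K$), and study $\widetilde F := \alpha^{-1}(K_{a}) \subset Y$. As an \'etale $F$-bundle over the smooth abelian variety $K_{a}$, the subvariety $\widetilde F$ is connected, normal with canonical singularities, of dimension $\dim F + \dim K = \dim S$. Since $\alpha(\widetilde F) = K_{a}$ and $\gamma(K_{a}) = \{a\}$, the restriction of $\tau$ gives a finite morphism $\widetilde F \to \{a\}\times S \isom S$, which is surjective by a dimension count. Moreover, as the non-\'etale locus $\Sigma \subset W$ of $\tau$ has codimension at least two, for general $a$ the intersection $\Sigma \cap (\{a\}\times S)$ has codimension at least two in $\{a\}\times S$; hence $\widetilde F \to S$ is \'etale in codimension one. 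By the definition of $q^{\circ}(S)$ this forces $q(\widetilde F) \le q^{\circ}(S) = 0$.

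Finally, $\widetilde F$ surjects onto the abelian variety $K_{a}$ of dimension $\dim K$, so pullback of global holomorphic $1$-forms on a smooth model yields $\tilde q(\widetilde F) \ge \dim K$; as canonical singularities are rational, $q(\widetilde F) = \tilde q(\widetilde F)$, and hence $\dim K \le 0$. Therefore $\gamma$ is an isogeny, $q(Y) = \dim \Alb(Y) = \dim A$, and the upper bound follows. The subtle point is that the \'etale fiber bundle structure of $\alpha$ automatically endows $\widetilde F$ with the normality and canonical singularities needed to apply the definitions of $q^{\circ}(S)$ and $\tilde q$, sidestepping any Bertini-type issue for the full scheme-theoretic slice $\tau^{-1}(\{a\}\times S)$; beyond this, the argument is a direct application of the Albanese-fiber-bundle technique already exploited in Proposition~\ref{prop:split}.
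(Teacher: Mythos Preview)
Your proof is correct and rests on the same core idea as the paper's: slice over a general point of $A$ to produce a finite cover of $S$ \'etale in codimension one with positive irregularity, contradicting $q^{\circ}(S) = 0$. The organization differs, however. The paper argues by contradiction and invokes Proposition~\ref{prop:split}\,\eqref{prop:split:3} as a black box: if $q^{\circ}(A \times S) > \dim A$, there is a cover $\tau \colon A' \times S' \to A \times S$ \'etale in codimension one with $\dim A' = q^{\circ}(A \times S)$ and $S'$ weak Calabi--Yau; the Albanese universality gives a surjection $\varphi \colon A' \to A$, and restricting $\tau$ to $B \times S'$ (with $B$ a connected component of a general fiber of $\varphi$) immediately yields the forbidden cover of $S$. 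You instead take an arbitrary cover $Y \to A \times S$ and build the slice $\widetilde F$ by hand from Kawamata's \'etale fiber-bundle structure of the Albanese map of $Y$, effectively re-deriving part of what Proposition~\ref{prop:split} already packages. The paper's route is shorter because the product structure $A' \times S'$ makes the slicing trivial; your route is a bit more self-contained and shows directly that \emph{every} cover $Y$ has $q(Y) \le \dim A$, not just the maximal one supplied by Proposition~\ref{prop:split}.
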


\begin{proof}
We have \( q^{\circ}(A \times S) \geq q(A \times S) = q(A) + q(S) = \dim A \).
Assume that \( q^{\circ}(A \times S) > \dim A \).
By Proposition~\ref{prop:split}, there is a finite surjective morphism
\( \tau \colon A' \times S' \to A \times S \) \'etale in codimension one
for an abelian variety \( A' \) of dimension \( q^{\circ}(A \times S) \) and
a weak Calabi--Yau variety \( S' \).
Since the first projections \( p_{1} \colon A \times S \to A \) and
\( p'_{1} \colon A' \times S' \to A' \)
are the Albanese maps, we have a surjective
morphism \( \varphi \colon A' \to A \) such that
\( p_{1} \circ \tau = \varphi \circ p'_{1}\).
Let \( B \) be a connected component of
the fiber \( \varphi^{-1}(P) \) for a general point \( P \in A \).
Then, \( B \) is a positive-dimensional abelian variety.
By restricting \( \tau \),
we have a finite covering \( B \times S' \to \{P\} \times S \)
\'etale in codimension one. This contradicts that \( q^{\circ}(S) = 0\).
Thus, \( q^{\circ}(A \times S) = \dim A \).
\end{proof}

We have the following variant of \cite{NZ},
Proposition~4.3, which treats only \'etale coverings and
varieties with only canonical singularities:

\begin{lemma}\label{lem:AlbClos}
Let \( V \) be a normal projective variety with only log-terminal singularities
such that \( K_{V} \sim_{\BQQ} 0 \).
Then there exists a finite morphism \( \tau \colon V\sptilde \to V \) satisfying
the following conditions, uniquely up
to isomorphism over \( V \)\emph{:}
\begin{enumerate}
\item \label{lem:AlbClos:1}
\( \tau \) is \'etale in codimension one.

\item \label{lem:AlbClos:2}
\( q^{\circ}(V) = q(V\sptilde) \).

\item \label{lem:AlbClos:3}
\( \tau \) is Galois.

\item \label{lem:AlbClos:4}
If \( \tau' \colon V' \to V \) satisfies the conditions
\eqref{lem:AlbClos:1}, \eqref{lem:AlbClos:2}, then
there exists a finite surjective morphism
\( \sigma \colon V' \to V\sptilde \)
\'etale in codimension one such that
\( \tau' = \tau \circ \sigma \).
% \( \tau' = \tau \circ \sigma \) for a finite surjective morphism
% \( \sigma \colon V' \to V\sptilde \) \'etale in codimension one.
\end{enumerate}
\end{lemma}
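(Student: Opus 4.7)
The plan is to construct $\widetilde{V}$ as the quotient of a suitable Galois cover $Z$ of $V$ by the subgroup of Galois elements acting on $\Alb(Z)$ by translations, and to verify the universal property via a cofinality argument on such Galois covers.

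For existence, I will first apply Proposition~\ref{prop:split}(3) to $V$ to obtain a finite morphism $A_0 \times S_0 \to V$ étale in codimension one, with $\dim A_0 = q^{\circ}(V) =: q$ and $S_0$ a weak Calabi--Yau variety, and let $Z \to V$ be its Galois closure. Then $Z$ is normal projective, $Z \to V$ is Galois and étale in codimension one, and the induced $Z \to A_0 \times S_0$ is étale in codimension one. Hence $Z$ has only canonical singularities (Remark~\ref{rem:logterm}), $K_Z \sim 0$ by the ramification formula, and $q(Z) = q$ (the inequality $q(Z) \ge q(A_0 \times S_0) = q$ follows from trace-splitting of the direct image of $\SO_Z$ to $A_0 \times S_0$, and the reverse from the definition of $q^{\circ}$). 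By Kawamata's theorem the Albanese morphism $\alpha \colon Z \to A := \Alb(Z)$ is then an étale fiber bundle with $\dim A = q$. Set $G := \Gal(Z/V)$; the functorial $G$-action on $A$ lies in $\Aut_{\mathrm{var}}(A) \cong A \rtimes \Aut_{\mathrm{grp}}(A)$, and I let $G_0 \lhd G$ be the kernel of the induced homomorphism $G \to \Aut_{\mathrm{grp}}(A)$. I then define $\widetilde{V} := Z/G_0$.

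Conditions (1)--(3) should follow quickly: (1) $\widetilde{V} \to V$ is étale in codimension one because $Z \to V$ is; (3) $\widetilde{V} \to V$ is Galois with group $G/G_0$ by the normality of $G_0$; and for (2), combining $H^1(\widetilde{V}, \SO_{\widetilde{V}}) = H^1(Z, \SO_Z)^{G_0}$ with the $G_0$-equivariant isomorphism $\alpha^{*} \colon H^1(A, \SO_A) \to H^1(Z, \SO_Z)$ reduces the problem to showing that $G_0$ acts trivially on $H^1(A, \SO_A)$, which is clear because $G_0$ acts on $A$ by translations, and translations are homotopic to the identity on the underlying real torus. For the universal property (4), given $\tau' \colon V' \to V$ satisfying (1) and (2), I will build a common Galois cover $Z' \to V$ dominating both $Z$ and $V'$ while retaining the properties of $Z$ (étaleness in codimension one, canonical singularities, $K_{Z'} \sim 0$, and $q(Z') = q$); concretely, apply Proposition~\ref{prop:split}(3) to the Galois closure of $V' \to V$, take the normalized fiber product of the resulting cover with $Z$, and pass to a further Galois closure, using the ramification formula to propagate $K \sim 0$ and canonical singularities. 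Applying the Step~2 construction to $Z'$ yields $\widetilde{V}_{Z'} = Z'/G_{Z',0}$. Writing $V' = Z'/H'$, the hypothesis $q(V') = q(Z')$ forces $H'$ to act trivially on $H^1(\Alb(Z'), \SO)$; since a group-automorphism of an abelian variety is determined by its differential at the origin, this means $H' \subset G_{Z',0}$, whence $V' \to \widetilde{V}_{Z'}$. Meanwhile, the induced isogeny $\Alb(Z') \to \Alb(Z)$ sends translations to translations, so the natural surjection $G_{Z'} \twoheadrightarrow G_Z$ maps $G_{Z',0}$ into $G_{Z,0}$, giving $\widetilde{V}_{Z'} \to \widetilde{V}$. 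The composite is the desired $\sigma \colon V' \to \widetilde{V}$, and the ramification identity $R_{\tau'} = R_{\sigma} + \sigma^{*}R_{\tau}$ (with $R_{\tau'} = R_{\tau} = 0$) forces $R_{\sigma} = 0$, so $\sigma$ is étale in codimension one. Uniqueness of $\widetilde{V}$ up to $V$-isomorphism then follows by applying (4) to any two candidates and comparing degrees.

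The main obstacle I anticipate is the bookkeeping for the common dominating cover $Z'$: retaining simultaneously the properties ``étale in codimension one, canonical singularities, $K \sim 0$, and achieving $q^{\circ}(V)$'' through a chain of Galois closures, fiber products, and normalizations is delicate but tractable via the ramification formula for étale-in-codimension-one morphisms. A more conceptual point, essential for the reverse implication in (4), is the faithfulness of $\Aut_{\mathrm{grp}}(\Alb(Z'))$ acting on $H^1(\Alb(Z'), \SO)$---without it, $H'$ acting trivially on $H^1$ would not suffice to conclude $H' \subset G_{Z',0}$.
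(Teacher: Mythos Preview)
Your proof is correct and follows essentially the same route as the paper's: construct a Galois cover $Z \to V$ \'etale in codimension one with $q(Z) = q^{\circ}(V)$, quotient by the normal subgroup $G_0$ of the Galois group acting on $\Alb(Z)$ by translations, and for the universal property pass to a common dominating Galois cover. The paper phrases things via the action on $H_1(\Alb(W),\BZZ)$ rather than on $H^1(\SO)$ (these are equivalent by the real structure on $H^1(\BCC)$, as you implicitly use), starts from Proposition~\ref{prop:split}\eqref{prop:split:1} rather than \eqref{prop:split:3}, and asserts $W''_0 \cong W_0$ directly rather than merely a morphism $\widetilde V_{Z'} \to \widetilde V$, but these are cosmetic differences.
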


We call \( \tau \) the \emph{Albanese closure} of \( V \) in codimension one.

\begin{proof}
The same argument as in the proof of \cite{NZ}, Proposition~4.3 works as follows:
We may assume that \( q^{\circ}(V) > 0 \).
There is a Galois covering \( W \to V \) \'etale
in codimension one with \( q(W) = q^{\circ}(V) \)
by Proposition~\ref{prop:split}, \eqref{prop:split:1}.
Then \(K_{W} \sim_{\BQQ} 0 \) and \( W \) has only
log-terminal singularities (cf.\ Remark~\ref{rem:logterm}).
Let \( W \to \Alb(W) \) be the Albanese map of \( W \)
and \( \Gal(W/V) \) the Galois group of \( W \to V \).
Then we have a natural homomorphism \( \Gal(W/V)
\to \Aut(\OH_{1}(\Alb(W), \BZZ)) \). Let \( G_{0} \) be the kernel
and let \( W_{0} \) be the quotient variety \( G_{0} \backslash W \)
of \( W \) by the action of \( G_{0} \).
Then \( q(W_{0}) = q(W) \), since the quotient variety of
\( \Alb(W) \) by \( G_{0} \) is an abelian variety,
as in the proof of Proposition~\ref{prop:split}.
Therefore, the Galois covering
\( W_{0} \to V\) satisfies the
conditions~\eqref{lem:AlbClos:1}--\eqref{lem:AlbClos:3}. %%%
Let \( W' \to V \) be an arbitrary covering satisfying the
conditions~\eqref{lem:AlbClos:1} and \eqref{lem:AlbClos:2}.
Then there exist finite morphisms \( W'' \to W \) and \( W'' \to W' \)
over \( V \) such that the composite \( W'' \to V \) is Galois and
\'etale in codimension one.
Thus, \( W''_{0} \isom W_{0} \) for the quotient variety \( W''_{0} \)
for \( W'' \) obtained by the same procedure as in defining \( W_{0} \) from \( W \),
and there is a morphism \( W' \to W_{0} \) over \( V \).
Hence, \( V\sptilde := W_{0} \) satisfies all the required conditions %%%
\eqref{lem:AlbClos:1}--\eqref{lem:AlbClos:4},
and \(V\sptilde \to V \) is unique up to non-canonical isomorphism over \( V \).
\end{proof}

%%%% Q-abelian
\begin{dfn}\label{dfn:Qab}
A normal projective variety \(W\) is called \emph{Q-abelian}
if there are an abelian variety \(A\) and a finite surjective morphism
\(A \to W\) which is \'etale in codimension one.
\end{dfn}

\begin{remark}
By Proposition~\ref{prop:split},
a Q-abelian variety is characterized as a normal projective variety \( X \)
with only log-terminal singularities
such that \( K_{X} \sim_{\BQQ} 0 \) and \( q^{\circ}(X) = \dim X \).
The Albanese closure of a Q-abelian variety is abelian,
by Proposition~\ref{prop:split}, \eqref{prop:split:2}.
\end{remark}

A surjective endomorphism of the direct product of certain varieties is split.
The following gives an example:

\begin{lemma}\label{split}
Let \(A\) be an abelian variety and \(S\) a normal projective variety
with only rational singularities.
%such that \(q(S) = 0\).
Suppose that \(q(S) = 0\) and that \(S\) is not uniruled.
Let \(f \colon S \times A \to S \times A\) be a surjective morphism.
Then \(f = f_S \times f_A\) for suitable endomorphisms
\(f_S\) and \(f_A\) of \(S\) and \(A\), respectively.
\end{lemma}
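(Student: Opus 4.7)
The plan is to decouple \( f \) into its \( S \)- and \( A \)-components, handling the \( A \)-direction via the Albanese and the \( S \)-direction via rigidity. Let \( p \colon S \times A \to S \) and \( q \colon S \times A \to A \) be the two projections. Since \( S \) is normal with only rational singularities and \( q(S) = 0 \), Kawamata's extension of the Albanese variety (\cite{Ka85}, Lemma~8.1) gives \( \Alb(S) = 0 \), so \( \Alb(S \times A) = A \) with \( q \) as its Albanese morphism. Albanese functoriality applied to \( f \) produces \( f_{A} \colon A \to A \) such that \( q \circ f = f_{A} \circ q \) up to a translation; after post-composing \( f \) with a suitable translation in the \( A \)-factor (which is harmless), we arrange \( q \circ f = f_{A} \circ q \). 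Since \( f \) is finite surjective (Lemma~\ref{lem:first}), so is \( f_{A} \), which is therefore an isogeny. We may then write \( f(s, a) = (g(s, a), f_{A}(a)) \) for a morphism \( g \colon S \times A \to S \), and each restriction \( g_{a} := g(\cdot, a) \colon S \to S \) is a finite surjective self-morphism of \( S \).

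The remaining task is to show that \( g_{a} \) is independent of \( a \); then \( f_{S} := g_{0} \) satisfies \( f = f_{S} \times f_{A} \). The family \( \{g_{a}\}_{a \in A} \) corresponds to a morphism \( \Psi \colon A \to \Sur(S, S) \) to the scheme of surjective self-endomorphisms of \( S \), and I claim \( \Psi \) is constant. By the rigidity lemma applied to \( g \colon A \times S \to S \) with \( A \) complete, constancy of \( \Psi \) follows once one exhibits a single point \( s_{0} \in S \) for which \( a \mapsto g_{a}(s_{0}) \) is constant. A crucial input is the triviality of \( \Aut^{0}(S) \): being non-uniruled, \( S \) admits no linear algebraic subgroup in \( \Aut^{0}(S) \) (any orbit of such a subgroup would give rational curves sweeping \( S \)), so \( \Aut^{0}(S) \) is an abelian variety; by the Nishi--Matsumura theorem it embeds up to a finite kernel into \( \Alb(S) = 0 \), forcing \( \Aut^{0}(S) \) to be trivial.

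The main obstacle is promoting the triviality of \( \Aut^{0}(S) \) to the constancy of \( \Psi \). The tangent space to \( \Sur(S, S) \) at \( g_{a} \) is bounded by \( \OH^{0}(S, g_{a}^{*}T_{S}) \), which can be strictly larger than \( \OH^{0}(S, T_{S}) = 0 \) when \( g_{a} \) is ramified; so a naive tangent-space argument does not suffice. One circumvents this by using that the family \( \{g_{a}\} \) comes from an endomorphism of the product \( S \times A \): the differential \( dg|_{q^{*}T_{A}} \colon \SO_{S \times A}^{\dim A} \to g^{*}T_{S} \), whose vanishing is equivalent to the constancy of \( \Psi \), is globally controlled by this product structure. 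The cleanest route is to invoke the Iitaka fibration \( \pi \colon \tilde{S} \to I_{S} \) of a smooth model \( \tilde{S} \) of \( S \): the base \( I_{S} \) is of general type, so \( \Aut(I_{S}) \) is finite, forcing the induced endomorphisms of \( I_{S} \) by the family \( \{g_{a}\} \) to be constant; on the general fiber \( F \) (which has \( \kappa(F) = 0 \) and trivial \( \Aut^{0}(F) \)) the resulting family of self-covers is again rigid by standard structure theorems for varieties of Kodaira dimension zero. Combining these yields constancy of \( \Psi \), and hence \( f = f_{S} \times f_{A} \).
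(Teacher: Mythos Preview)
Your setup via the Albanese map is correct and matches the paper: since \( S \) has rational singularities and \( q(S)=0 \), the second projection is the Albanese morphism of \( S\times A \), so \( f \) descends to an endomorphism \( f_A \) of \( A \), and \( f(s,a) = (g_a(s), f_A(a)) \) for a family \( a\mapsto g_a \in \Sur(S) \). You also correctly identify that \( \Aut^0(S) \) is trivial and that this, by itself, does not force the family \( \{g_a\} \) to be constant.

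The gap is in your attempt to close this via the Iitaka fibration. Two problems: first, you invoke the Iitaka fibration of \( S \), which requires \( \kappa(S)\ge 0 \); the hypotheses only give that \( S \) is non-uniruled, and ``non-uniruled \( \Rightarrow \kappa\ge 0 \)'' is not available in general. Second, even granting \( \kappa(S)\ge 0 \), your claim that the general fiber \( F \) has trivial \( \Aut^0(F) \) and that families of self-covers of \( F \) are rigid is unjustified: a fiber with \( \kappa(F)=0 \) may well have positive-dimensional \( \Aut^0(F) \) (think of an abelian variety), and the condition \( q(S)=0 \) does not propagate to \( q(F)=0 \). The phrase ``standard structure theorems for varieties of Kodaira dimension zero'' does not point to a result that delivers what you need here. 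Your rigidity-lemma reduction (find a single \( s_0 \) with \( a\mapsto g_a(s_0) \) constant) is correct in principle, but nothing in your argument produces such an \( s_0 \).

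The paper supplies exactly the missing bridge. By a theorem of Horst (\cite{Ho}, Theorem~3.1), every compact subvariety of \( \Sur(S) \) is contained in a single \( \Aut^0(S) \)-orbit \( \Aut^0(S)\cdot f_S \). Thus the image \( \Psi(A)\subset \Sur(S) \) lies in such an orbit. For the triviality of \( \Aut^0(S) \) the paper does not use Nishi--Matsumura but rather Hanamura's theorem (\cite{Hm}, Theorem~(2.1)): for a non-uniruled smooth model \( S' \), the group \( \Bir(S') \) is a disjoint union of abelian varieties of dimension \( q(S')=q(S)=0 \). Since \( \Aut^0(S)\subset \Bir(S') \), it is zero-dimensional, hence \( \Psi(A)=\{f_S\} \) and \( f = f_S\times f_A \). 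Note that Hanamura's result needs only non-uniruledness, not \( \kappa\ge 0 \), so the paper's route avoids both of the issues above.
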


\begin{proof}
By the universality of the Albanese map,
\(f\) induces a surjective endomorphism \(f_A\) of
\(A = \Alb(S \times A)\).
We can write the endomorphism \( f \) as
\( S \times A \ni (s, a) \mapsto f(s, a) = (\rho_a(s), f_{A}(a))\),
where \(\rho \colon A \to \Sur(S)\), \(a \mapsto \rho_a\),
is a morphism into
\[\Sur(S) := \{g \colon S \to S \mid  g \text{ is a surjective morphism} \}. \]
By \cite{Ho}, Theorem 3.1, the compact subvariety \(\Imm(\rho)\) is contained in the
orbit of some \(f_S \in \Sur(S)\) by the action of \(\Aut^0(S) \).
For a smooth model \( S' \) of \( S \), the birational
automorphism group \( \Bir(S') \) contains \( \Aut^{0}(S) \) as a
subgroup.  By \cite{Hm}, Theorem (2.1),
\(\Bir(S')\) is a disjoint union of abelian varieties of dimension equal
to \(q(S') = q(S) = 0\).
Thus \(\Imm(\rho)\) consists of a single element, say \(\{f_S\}\).
Then \( f = f_S \times f_A \).
\end{proof}

\section{The non-uniruled case}

In this section, we shall study non-isomorphic quasi-polarized endomorphisms of non-uniruled normal
projective varieties.
The following gives examples of such polarized endomorphisms:
% We begin with an example
% of a non-uniruled singular variety \(X\) with a non-isomorphic polarized
% endomorphism. This \(X\) has the property that \(K_X \sim_{\BQQ} 0\),
% which turns out to be a general phenomenon as seen in Theorem~\ref{thm:nonuniruled}.

\begin{example}
Let \( A \) be an abelian variety of dimension \( n \geq 2\) and let
\( H \) be a symmetric ample divisor, i.e., \( H \) is ample and
\( \iota^{*}H \sim H \) for the involution
\( \iota \colon x \mapsto -x \).
Then the multiplication map \(
\mu_{m} \colon A \ni x \mapsto mx = x + \cdots + x \in A \) by an
integer \( m \ne 0\) is polarized by \( H \) as
\( \mu_{m}^{*}H \sim m^{2}H \)
(cf.\ \cite{Mm}, Chapter~II, \S~6, Corollary~3).
Let \( X = A/\iota \) be the quotient variety by the involution
\( \iota \).
Then \( \mu_{m} \) descends to a polarized endomorphism \( f_{m} \) of \( X \)
of degree \( \deg \mu_{m} = m^{2n} \).
If \( n = 2 \), then \( X \) has only \( 16 \) rational double
points of type \( \mathsf{A}_{1} \) as singularities and its minimal resolution
of singularities is a K3 surface, called the Kummer surface of \( A \);
in particular, \( f_{m} \) for \( m > 1 \) is not nearly \'etale in the sense of
\cite{NZ}, Definition~3.2 (cf.\ \cite{NZ}, Example~3.14).
If \( n \geq 3 \), then \( X \) has only \( 2^{2n} \) terminal cyclic quotient
singular points of type \( \frac{1}{2}(1, 1, \ldots, 1) \) as singularities,
and \( 2K_{X} \sim 0 \).
Thus, \( X \) is not uniruled and \( f_{m} \) is a non-isomorphic polarized endomorphism
for \( m > 1 \).
\end{example}

In the examples above,
\( X \) has only canonical singularities and \(K_X \sim_{\BQQ} 0\).
These properties hold in general by the following
fundamental result:
%
%
% The following is a fundamental result.

\begin{theorem}\label{thm:nonuniruled}
Let \( f \colon V \to V \) be a surjective endomorphism of
a normal projective variety \( V \) and let \( H \) be
a nef and big Cartier divisor on \( V \)
such that \( f^{*}H \sim \q H \) for a positive integer
\( \q > 1 \). Suppose that \( V \) is not uniruled.
Then, there exist a projective birational morphism
\( \sigma \colon V \to X \) onto a normal projective variety \( X \),
an endomorphism \( f_{X} \) of \( X \), and
an ample divisor \( A \) on \( X \) such that
\begin{enumerate}
\item  \( X \) has only canonical singularities with
\( K_{X} \sim_{\BQQ} 0 \),

\item \( f_{X}^{*}A \sim \q A \),

\item  \( f_{X} \circ \sigma = \sigma \circ f \),

\item  \( H \sim \sigma^{*}A \), and

\item \( f_{X} \) is \'etale in codimension one.
\end{enumerate}
In particular, if \( H \) is ample, then
\( V \) has only canonical singularities, \( K_{V} \sim_{\BQQ} 0 \), and
\( f \) is \'etale in codimension one.
\end{theorem}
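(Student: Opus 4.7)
The plan is to exploit the ramification formula \( K_{V} \sim f^{*}K_{V} + R_{f} \) combined with the polarization \( f^{*}H \sim \q H \) to produce a numerical equation that forces the ramification and canonical contributions to degenerate against \( H^{n-1} \). Intersecting the ramification formula with \( H^{n-1} \) and using the projection formula
\[ (f^{*}K_{V}) \cdot (f^{*}H)^{n-1} = (\deg f)\,K_{V} \cdot H^{n-1} = \q^{n}\,K_{V} \cdot H^{n-1}, \]
together with \( (f^{*}H)^{n-1} = \q^{n-1}H^{n-1} \), one obtains \( (f^{*}K_{V}) \cdot H^{n-1} = \q\,(K_{V} \cdot H^{n-1}) \), and hence the key identity
\[ (1-\q)\,K_{V} \cdot H^{n-1} \;=\; R_{f} \cdot H^{n-1}. \]

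Next, I would pass to a resolution \( \pi \colon \widetilde{V} \to V \). Non-uniruledness of \( \widetilde{V} \) gives pseudo-effectivity of \( K_{\widetilde{V}} \) by the theorem of Boucksom--Demailly--Paun--Peternell, so intersecting with the nef class \( (\pi^{*}H)^{n-1} \) and applying the projection formula yields \( K_{V} \cdot H^{n-1} = K_{\widetilde{V}} \cdot (\pi^{*}H)^{n-1} \geq 0 \). Simultaneously \( R_{f} \cdot H^{n-1} \geq 0 \) since \( R_{f} \) is effective and \( H \) is nef. Because \( \q > 1 \), the identity forces both sides to vanish: \( K_{V} \cdot H^{n-1} = 0 = R_{f} \cdot H^{n-1} \).

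For the ample case (in particular \( H \) ample), \( R_{f} \cdot H^{n-1} = 0 \) with \( H \) ample and \( R_{f} \) effective forces \( R_{f} = 0 \), i.e., \( f \) is \'etale in codimension one; then \( K_{V} \sim f^{*}K_{V} \). Applying Lemma~\ref{lem:HI} on \( \widetilde V \) to \( K_{\widetilde{V}} \) (pseudo-effective, and orthogonal to a nef-and-big \( (n-1) \)-fold product) yields \( K_{V} \numeq 0 \). To promote this to \( K_{V} \sim_{\BQQ} 0 \), the difference between the numerical and linear class lies in \( \mathrm{Pic}^{0}(V) \otimes \BQQ \); since \( K_{V} \) is \( f^{*} \)-invariant while every eigenvalue of \( f^{*} \) on \( \OH^{1}(V, \SO_{V}) \) has absolute value \( \sqrt{\q} > 1 \) by Lemma~\ref{lem:qpol}~\eqref{lem:qpol:1}, the only invariant class is zero. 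Canonical singularities of \( V \) then follow by passing to an equivariant resolution for a suitable power \( f^{k} \) (cf.\ Lemma~\ref{lem:Galcl}) and applying the same ramification identity to the lift: its effective ramification divisor must be \( \pi \)-exceptional (non-exceptional components would give a nonzero contribution to \( H^{n-1} \), contradicting the vanishing), and the polarized-endomorphism eigenvalue structure on the exceptional lattice forces every discrepancy coefficient to be non-negative.

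For general nef-and-big \( H \), the vanishing \( R_{f} \cdot H^{n-1} = 0 \) and a Lemma~\ref{lem:HI}-type analysis place every prime component of \( R_{f} \) in the null locus of \( H \). The contraction \( \sigma \colon V \to X \) is then constructed as the morphism to the ample model \( X = \mathrm{Proj}\bigoplus_{m\geq 0} \OH^{0}(V, mH) \), with \( H = \sigma^{*}A \) for \( A \) ample on \( X \); the endomorphism \( f \) descends to \( f_{X} \) via its natural action on the section ring (since \( f^{*}H \sim \q H \)), and the ample-case conclusions applied to \( (X, A, f_{X}) \) deliver the remaining assertions. The hardest step is proving semi-ampleness of \( H \) (equivalently, the existence of \( \sigma \)) together with the canonicity of \( X \): establishing both simultaneously is the technical heart of the argument and requires the interplay between the polarized-endomorphism structure and minimal-model machinery, in particular the Kawamata--Shokurov base-point-free theorem once log-terminality of an appropriate birational model has been secured from the numerical inputs of the first two steps.
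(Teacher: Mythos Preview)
Your opening computation and the reduction to \( K_{V}\cdot H^{n-1} = R_{f}\cdot H^{n-1} = 0 \) match the paper exactly. After that, however, two steps break down.

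First, you cannot apply Lemma~\ref{lem:HI} to \( K_{\widetilde V} \) directly. Condition~\eqref{lem:HI:1} of that lemma requires \( D\cdot G\cdot L_{1}\cdots L_{n-2}\ge 0 \) for every effective \( G \); pseudo-effectivity does not guarantee this (think of a \( (-1) \)-curve on a surface). The paper fixes this by taking the \( \sigma \)-decomposition \( K_{Y}=P_{\sigma}(K_{Y})+N_{\sigma}(K_{Y}) \) of \cite{Ny04}: the positive part \( P_{\sigma}(K_{Y}) \) is \emph{movable}, hence satisfies \eqref{lem:HI:1}, and Lemma~\ref{lem:HI} is applied to \( P_{\sigma}(K_{Y}) \), not to \( K_{Y} \). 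This yields \( P_{\sigma}(K_{Y})\numeq 0 \), i.e.\ numerical Kodaira dimension \( \kappa_{\sigma}(Y)=0 \), and then the abundance theorem for \( \kappa_{\sigma}=0 \) (\cite{Ny04}, Chapter~V, Theorem~4.8) gives \( \kappa(Y)=0 \), so \( K_{Y}\sim_{\BQQ}E \) with \( E\ge 0 \). This abundance step is the real engine of the proof and is absent from your outline.

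Second, your argument for canonical singularities is not valid. Endomorphisms do not lift to resolutions in general, so there is no ``equivariant resolution for a suitable power \( f^{k} \)''; Lemma~\ref{lem:Galcl} concerns Galois closures of \( f^{k} \), which are finite covers of \( X \), not resolutions. The paper never lifts \( f \). Instead, once \( K_{Y}\sim_{\BQQ}E\ge 0 \) is known, it shows \( K_{Y}+\mu^{*}H \) has Zariski decomposition with positive part \( \mu^{*}H \), invokes the base-point-free theorem to get \( |mH| \) free, defines \( \sigma\colon V\to X \) by \( |mH| \), and then canonical singularities fall out for free: \( K_{X}=\sigma_{*}\mu_{*}E\sim_{\BQQ}0 \) since \( \mu_{*}E \) is \( \sigma \)-exceptional, and \( K_{Y}-(\sigma\mu)^{*}K_{X}=E\ge 0 \). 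The endomorphism \( f_{X} \) is obtained by Stein factorization of \( \sigma\circ f \), and \( R_{f_{X}}=\sigma_{*}R_{f}=0 \) because \( R_{f} \) is \( \sigma \)-exceptional.
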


\begin{proof}
We may assume that \( V \) is of dimension \( n \geq 2 \).
Taking intersection numbers with \( (f^{*}(H))^{n-1} =
f^{*}(H) \cdots f^{*}(H)\)
of the both sides
of the ramification formula: \( K_{V} = f^{*}(K_{V}) + R_{f} \),
we obtain
\[ (\q-1)K_{V}H^{n-1} + R_{f}H^{n-1} = 0. \]
Thus, \( K_{V}H^{n-1} \leq 0 \).
Let \( \mu \colon Y \to V \) be a birational morphism from a smooth
projective variety \( Y \).
Since \( Y \) is not uniruled, \( K_{Y}(\mu^{*}H)^{n-1} \geq 0 \) by
\cite{MM}. Thus, \( K_{Y}(\mu^{*}H)^{n-1} = K_{V}H^{n-1} = R_{f}H^{n-1} = 0 \).
Moreover, \( K_{Y} \) is pseudo-effective
by \cite{BDPP} (cf.\ \cite{LBook2}, \S 11.4.C).
Thus, we have the \( \sigma \)-decomposition
\( K_{Y} = P_{\sigma}(K_{Y}) + N_{\sigma}(K_{Y}) \) in the sense of
\cite{Ny04}: \( N_{\sigma}(K_{Y}) \) is an effective \( \BRR \)-divisor
determined by the following property:
\( P_{\sigma}(K_{Y}) = K_{Y} - N_{\sigma}(K_{Y}) \) is movable, and
if \( B \) is an effective \( \BRR \)-divisor such that \( K_{Y} - B \) is
movable, then \( N_{\sigma}(K_{Y}) \leq B \).
Here, an \( \BRR \)-divisor \( D \) is called \emph{movable} %%%
if: for any \( \varepsilon > 0 \), any ample divisor \( H' \)
and any prime divisor \( \Gamma \),
there is an effective \( \BRR \)-divisor \( \Delta \) such that
\( \Delta \numeq D + \varepsilon H'\) and \( \Gamma \not\subset \Supp \Delta \)
(cf.\ \cite{Ny04}, Chapter III, \S 1.b). %%%
In particular, \( P_{\sigma}(K_{Y}) \) satisfies the condition
\eqref{lem:HI:1} of Lemma~\ref{lem:HI}.
Furthermore,
\[ 0 \leq P_{\sigma}(K_{Y}) (\mu^{*}H)^{n-1} =
(K_{Y} -N_{\sigma}(K_{Y})) (\mu^{*}H)^{n-1}
=  -N_{\sigma}(K_{Y}) (\mu^{*}H)^{n-1} \leq 0. \]
Therefore, \( P_{\sigma}(K_{Y}) \numeq 0 \) and
\( K_{Y} \numeq N_{\sigma}(K_{Y}) \) by Lemma~\ref{lem:HI}.
This implies that the numerical Kodaira dimension \( \kappa_{\sigma}(Y) \)
of \( Y \) in the sense of \cite{Ny04}, Chapter V, is zero.
Namely, for any ample divisor \( H' \) on \( Y \),
the function \( m \mapsto \dim \OH^{0}(Y, \SO_{Y}(mK_{Y} + H')) \) is bounded
(cf.\ \cite{Ny04}, Chapter V, Corollary~1.12).
By Theorem~4.8 of \cite{Ny04}, Chapter V,
which is the abundance theorem for \( \kappa_{\sigma} = 0 \),
we have \( \kappa(Y) = 0 \).
In particular, \( K_{Y} \sim_{\BQQ} E \) for an effective
\( \BQQ \)-divisor \( E \) such that \( E (\mu^{*}H)^{n-1} = 0 \).
Therefore, \( K_{Y} + \mu^{*}H \) has a Zariski-decomposition
whose negative part is \( E \) and whose positive part is \( \BQQ
\)-linearly equivalent to \( \mu^{*}H \)
by \cite{Ny04}, Chapter III, Proposition~3.7, i.e.,
\( N_{\sigma}(K_{Y} + \mu^{*}H) = E\), and
\( P_{\sigma}(K_{Y} + \mu^{*}H) = \mu^{*}H \) is nef.
The Zariski-decomposition above in the sense of \cite{Ny04} coincides with the
Zariski-decomposition in the sense of
Cutkosky--Kawamata--Moriwaki (\cite{Cu}, \cite{Ka87}, \cite{Mw}) or
that of Fujita \cite{Ft}, since the divisor \( K_{Y} + \mu^{*}H \)
is big (cf.\ \cite{Ny04}, Chapter III, Remark~1.17).
Thus, the positive part \( P_{\sigma}(K_{Y} + \mu^{*}H) = \mu^{*}H \)
is semi-ample, and furthermore, \( \operatorname{Bs}|m\mu^{*}H| = \emptyset  \)
for \( m \gg 0 \), by a version of
the base point free theorem (cf.\ \cite{Ft}, (A.5); \cite{Ka87}, Theorem~1;
\cite{Mw}, Theorem~0).
In particular, \( \operatorname{Bs}|mH| = \emptyset \) for \( m \gg 0 \).
Let \( \sigma \colon V \to X \) be the birational morphism onto a normal
projective variety \( X \) defined by the free linear system \( |mH|
\) for \( m \gg 0 \). Then \( H \sim \sigma^{*}A \) for
an ample divisor \( A \) on \( X \). Since \( (\mu_{*}E)H^{n-1} = R_{f}H^{n-1} = 0
\), \( \mu_{*}E \) and \( R_{f} \) are \( \sigma \)-exceptional.
Therefore, \( K_{X} = \sigma_{*}(K_{V}) \sim_{\BQQ} \sigma_{*}(\mu_{*}(E)) = 0 \) and
\( X \) has only canonical singularities, since
\( K_{Y} - \mu^{*}\sigma^{*}(K_{X}) = E \geq 0 \). %%%
By considering the Stein factorization of the
composite \( \sigma \circ f \colon V \to X\), we have an
endomorphism \( f_{X} \) of \( X \) such that
\( f_{X} \circ \sigma = \sigma \circ f \) and \( f_{X}^{*}A \sim \q A \).
Moreover, \( R_{f_{X}} = \sigma_{*}(R_{f}) = 0 \).
Hence, \( f_{X} \) is \'etale in codimension one.
The last assertion follows immediately,
since \( \sigma \) is isomorphic if \( H \) is ample.
Thus, we are done.
\end{proof}

The following gives a sufficient condition for a normal projective
variety admitting polarized endomorphisms to be Q-abelian
(cf.\ Definition~\ref{dfn:Qab}):

\begin{theorem}\label{thm:Qab}
Let \( f \colon X \to X\) be a non-isomorphic
polarized endomorphism of an \( n \)-dimensional normal
projective variety \( X \).
Assume that \( f \) is \'etale in codimension one
and that, for any point \( P \in \Sing X \), there is a connected analytic
open neighborhood \( \SU \) of \( P \)
such that the algebraic fundamental group
\( \pi_{1}^{\alg}(\SU_{\reg}) \) is finite.
Then \( X \) is a Q-abelian variety.
\end{theorem}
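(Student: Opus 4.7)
The approach is to combine Theorem~\ref{thm:nonuniruled} with Proposition~\ref{prop:split}\eqref{prop:split:3} to reduce to a quasi-étale product \(A \times S\) of an abelian variety and a weak Calabi--Yau variety, and then to kill the Calabi--Yau factor using the local fundamental group hypothesis.

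\textbf{Reducing to \(K_{X} \sim_{\BQQ} 0\).} Since \(f\) is étale in codimension one and polarized by ample \(H\) with \(f^{*}H \sim \q H\) (\(\q \geq 2\)), the ramification formula \(R_{f} = 0\) gives \(K_{X} \sim f^{*}K_{X}\), and pairing with \((f^{*}H)^{n-1} = \q^{n-1}H^{n-1}\) yields \(K_{X}\cdot H^{n-1} = 0\). I would then argue that \(X\) is not uniruled: after passing to an equivariant cover, \(f\) descends to a polarized endomorphism on the base of the maximal rationally connected fibration, and the vanishing of \(R_{f}\) would force any rationally connected fibers to be trivial, since a non-isomorphic polarized endomorphism of a positive-dimensional rationally connected variety must ramify. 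With non-uniruledness in hand, Theorem~\ref{thm:nonuniruled} applies; as \(H\) is already ample, the contracting morphism \(\sigma\) is an isomorphism, so \(X\) has canonical singularities with \(K_{X} \sim_{\BQQ} 0\).

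\textbf{Splitting into abelian and Calabi--Yau factors.} Passing to the global index-one cover of \(X\)—which is étale in codimension one and, by Lemma~\ref{lem:Galcl} and Remark~\ref{etale1}, inherits a compatible polarized endomorphism after replacing \(f\) by a suitable power—reduces me to the case \(K_{X} \sim 0\). Proposition~\ref{prop:split}\eqref{prop:split:3} then yields a finite cover \(\tau \colon A \times S \to X\) étale in codimension one, with \(A\) abelian of dimension \(q^{\circ}(X)\) and \(S\) a weak Calabi--Yau variety. A further application of Lemma~\ref{lem:Galcl} lifts a suitable power of \(f\) to a polarized endomorphism \(f_{V}\) of \(A \times S\); by Lemma~\ref{split}, which applies because \(q(S) = 0\) and \(S\) is non-uniruled, \(f_{V}\) splits as \(f_{A} \times f_{S}\) with polarized factors.

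\textbf{Eliminating the Calabi--Yau factor.} The main obstacle is to conclude \(\dim S = 0\). Suppose \(\dim S > 0\). Since \(\tau\) is étale in codimension one and \(A\) is smooth, the local-fundamental-group hypothesis at \(\Sing X\) transfers to the same hypothesis on \(S\), and by Theorem~\ref{thm:nonuniruled} the endomorphism \(f_{S}\) is itself étale in codimension one. Using the finiteness of \(\pi_{1}^{\alg}(\mathcal{U}_{\reg})\) at each singular point of \(S\), together with the tower of Galois covers of \(S\) extracted from iterates \(f_{S}^{k}\) via Lemma~\ref{lem:Galcl}, I would construct a single finite quasi-étale Galois cover \(S^{\dagger} \to S\) that simultaneously trivializes every local algebraic fundamental group; then \(S^{\dagger}\) is smooth (normal with trivial local \(\pi_{1}^{\alg}\) in every neighborhood of singularities) and still a weak Calabi--Yau with \(q(S^{\dagger}) = 0\), carrying a compatible non-isomorphic polarized endomorphism. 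But a smooth projective weak Calabi--Yau variety with \(q = 0\) admitting a non-isomorphic polarized endomorphism is forbidden by Fakhruddin's theorem \cite{Fa}, Theorem~4.2, which would force \(S^{\dagger}\) to be dominated by a positive-dimensional abelian variety and hence to have \(q(S^{\dagger}) > 0\), a contradiction. Therefore \(\dim S = 0\), the cover \(A \to X\) witnesses Q-abelianness, and \(X\) is Q-abelian. The step I expect to be hardest is precisely this construction of \(S^{\dagger}\): coordinating an equivariant quasi-étale cover that dominates all local fundamental groups at once requires tying the endomorphism structure (via iterated Galois closures) to the local algebraic hypothesis in a uniform, global way.
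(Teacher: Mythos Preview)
Your approach diverges substantially from the paper's, and it contains a genuine gap at the decisive step.

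\textbf{The main gap.} In the last paragraph you claim that your cover \(S^{\dagger}\) is smooth because it is ``normal with trivial local \(\pi_{1}^{\alg}\) in every neighborhood of singularities.'' That implication is false: a normal variety can have singular points at which the local algebraic fundamental group is trivial. Already in dimension three, the ordinary double point \(\{xy = zw\}\) is a (terminal, hence canonical) singularity whose local fundamental group is trivial, and more generally there is no shortage of factorial terminal singularities with simply connected punctured neighborhoods. So even if you succeeded in building a quasi-\'etale cover that kills every local \(\pi_{1}^{\alg}\), you would not be able to conclude smoothness, and Fakhruddin's theorem would not apply. The step you flagged as ``hardest'' is not merely hard---as stated, it does not yield what you need.

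\textbf{What the paper does instead.} The paper never passes through Theorem~\ref{thm:nonuniruled} or the splitting of Proposition~\ref{prop:split}. It works directly with the tower of Galois closures \(V_{k} \to X\) of \(f^{k}\) from Lemma~\ref{lem:Galcl}, which supplies \emph{two} finite morphisms \(g_{k}, h_{k} \colon V_{k+1} \to V_{k}\) with \(\deg h_{k} = \q^{n}\deg g_{k}\). The local finiteness hypothesis is used only to show that for \(k \gg 0\) both \(g_{k}\) and \(h_{k}\) are genuinely \'etale (not merely in codimension one). Smoothness of \(V_{k}\) then comes from a degree comparison: since \(g_{k}\) and \(h_{k}\) are \'etale, both pull \(\Sing V_{k}\) back to \(\Sing V_{k+1}\), and intersecting with powers of the polarizing ample divisor forces \((\Sing V_{k}) \cdot A_{k}^{d} = 0\), hence \(\Sing V_{k} = \emptyset\). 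The same two-map trick then gives \(c_{1}(V_{k})^{2}A_{k}^{n-2} = c_{2}(V_{k})A_{k}^{n-2} = 0\), and Yau's theorem finishes. The key idea you are missing is precisely this: having two \'etale maps of \emph{different} degrees between the same pair of varieties, together with a polarization, is what forces smoothness and the Chern-class vanishing.

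\textbf{A secondary issue.} Your reduction to the non-uniruled case is also shaky. You argue that \(R_{f} = 0\) forces the general MRC fiber to carry an unramified non-isomorphic polarized endomorphism. But Lemma~\ref{lem:MRC} produces the MRC as a morphism only after replacing \(X\) by a birational model \(W\), and the lifted endomorphism \(f_{W}\) can acquire ramification along the exceptional locus of \(W \to X\); so \(R_{f} = 0\) on \(X\) does not transparently give \(R_{f_{W}} = 0\) on \(W\) or on its fibers. The paper's route avoids this entirely, since it never needs to know in advance that \(X\) is non-uniruled.
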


\begin{proof}
% Let \( A \) be an ample divisor on \( X \) such that \( f^{*}A
% \sim \q A \).
For a positive integer \( k \),
let \( \theta_{k} \colon V_{k} \to X \) be the Galois
closure of \( f^{k} \), and let \( \tau_{k} \), \(\theta_{k} \), \( g_{k} \),
and \(h_{k} \) be as in Lemma~\ref{lem:Galcl},
which are all \'etale in codimension one (cf.\ Remark~\ref{etale1}).
% We set \( A_{k} \) to be the ample divisor
% \(\tau_{k}^{*}A \).
% Then \( g_{k}^{*}A_{k} \sim A_{k+1} \) and \( h_{k}^{*}A_{k} \sim \q A_{k+1} \).
Then, \( g_{k} \) and \( h_{k} \) are both \'etale for \( k \gg 0 \) by
the claim below applied to the cases \( (\alpha_{k}, \gamma_{k}) = (\theta_{k}, h_{k}) \)
and \( (\alpha_{k}, \gamma_{k}) = (\tau_{k}, g_{k}) \).
\renewcommand{\qedsymbol}{}
\end{proof}

\begin{cclaim}
For the \( X \) above, let \( \alpha_{k} \colon V_{k} \to X \) be finite Galois coverings
and let \( \gamma_{k} \colon V_{k+1} \to V_{k} \) be
finite surjective morphisms defined for \( k \geq 1 \)
such that
\( \alpha_{k} \) and \( \gamma_{k} \) are \'etale in codimension one and
\( \alpha_{k+1} = \alpha_{k} \circ \gamma_{k} \) for \( k \geq 1 \).
Then, \( \gamma_{k} \) is \'etale
for \( k \gg 0 \).
\end{cclaim}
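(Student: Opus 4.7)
The plan is to leverage the finiteness of the local algebraic fundamental groups $\pi_1^{\alg}(\SU_{\reg})$ assumed in the theorem to uniformly bound the ``local degrees'' of the Galois covers $\alpha_k$ over each singular point of $X$. Because such a local degree is monotonically non-decreasing in $k$ and uniformly bounded, it must stabilize; and stabilization is precisely what forces $\gamma_k$ to be étale at the points in question.

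I first reduce étaleness of $\gamma_k$ to a statement over $\Sing X$. Applying the ramification formula to the relation $\alpha_{k+1} = \alpha_k \circ \gamma_k$ gives $R_{\gamma_k} = 0$, so $\gamma_k$ is étale in codimension one; Remark~\ref{etale1} then shows that $\gamma_k$ is automatically étale at every smooth point of $V_k$. Moreover, since $\alpha_k$ is étale over $X_{\reg}$ by the same remark, one has $\Sing V_k \subseteq \alpha_k^{-1}(\Sing X)$. Hence it suffices to establish étaleness of $\gamma_k$ at every point of $V_{k+1}$ lying over $\alpha_k^{-1}(\Sing X)$, for $k \gg 0$.

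Using the compactness of $\Sing X$, I choose a finite cover $\{\SU_1, \ldots, \SU_N\}$ of $\Sing X$ by connected analytic open neighborhoods of the type furnished by the hypothesis, and set $d_j := |\pi_1^{\alg}((\SU_j)_{\reg})|$, which is finite. Fix $j$ and put $\SU_{j,k} := \alpha_k^{-1}(\SU_j)$. Any connected component $C$ of $\SU_{j,k}$ is normal, surjects onto $\SU_j$, and is étale over $(\SU_j)_{\reg}$; such a cover therefore corresponds to a subgroup of $\pi_1^{\alg}((\SU_j)_{\reg})$ of index $\deg(C/\SU_j)$, which consequently divides $d_j$. Since $\alpha_k$ is Galois, its Galois group acts transitively on the connected components of $\SU_{j,k}$ (it is transitive on every fiber, and each component surjects onto $\SU_j$), so all components share a common degree $e_k^{(j)} \leq d_j$.

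The core observation is that the sequence $(e_k^{(j)})_k$ is non-decreasing: any component $C$ of $\SU_{j,k+1}$ maps under $\gamma_k$ to some component $C'$ of $\SU_{j,k}$, and $e_{k+1}^{(j)} = \deg(C/C') \cdot e_k^{(j)}$. Being bounded by $d_j$, the sequence stabilizes from some $k_0(j)$ onward; beyond that index, $\deg(C/C') = 1$, so $\gamma_k|_C \colon C \to C'$ is a degree-one finite morphism of normal analytic spaces and hence an isomorphism, which makes $\gamma_k$ étale at every point over $\SU_j$. Setting $k_0 := \max_j k_0(j)$ and combining with the initial reduction finishes the proof. The most delicate technical point will be the identification of $\deg(C/\SU_j)$ with the index of a subgroup of $\pi_1^{\alg}((\SU_j)_{\reg})$, which relies on the fact that removing the codimension-$\geq 2$ preimage of $\Sing \SU_j$ does not disconnect the irreducible normal analytic space $C$.
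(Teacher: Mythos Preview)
Your proof is correct and follows essentially the same approach as the paper: reduce to points over $\Sing X$, use the Galois hypothesis to make the local data independent of the chosen component, and then exploit the finiteness of $\pi_1^{\alg}(\SU_{\reg})$ together with compactness of $\Sing X$ to force stabilization. The only cosmetic difference is that the paper tracks the descending chain of normal subgroups $\Pi(\SU;k)=\pi_1^{\alg}(\SV_{\reg})\subset\pi_1^{\alg}(\SU_{\reg})$ and argues that the injection $\gamma_{k*}$ eventually becomes an isomorphism, whereas you track the complementary datum, the non-decreasing bounded sequence of indices $e_k^{(j)}=\deg(C/\SU_j)$; these are two sides of the same coin.
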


\begin{proof}
For a point \( P \in \Sing X \), let \( \SU \subset X\)
be a connected analytic open neighborhood
such that \( \pi_{1}^{\alg}(\SU_{\reg}) \) is finite.
For a point \( Q \in \alpha_{k}^{-1}(P) \), let \( \SV \) (depending on \( Q \) and \( k \)) be the
connected component of \( \alpha_{k}^{-1}(\SU) \) containing \( Q \).
We set
\[ \Pi(\SU; k) := \pi_{1}^{\alg}(\SV \setminus \alpha_{k}^{-1}(\Sing X)).\]
Note that \( \Pi(\SU; k) = \pi_{1}^{\alg}(\SV_{\reg}) \)
by Remark~\ref{etale1}. Since \( \alpha_{k} \) is Galois,
\( \Pi(\SU; k) \) is independent of the choice of
\( Q \in \alpha_{k}^{-1}(P)\) and is a normal subgroup of
\( \pi_{1}^{\alg}(\SU_{\reg}) \).
By the finiteness assumption of \( \pi_{1}^{\alg}(\SU_{\reg})  \),
we have a positive integer \( k_{P} \)
such that the injection
\( \gamma_{k*} \colon \Pi(\SU; k+1) \to \Pi(\SU; k)  \)
is isomorphic for any \( k \geq k_{P} \).
As a consequence, we infer that
\( \gamma_{k} \colon V_{k+1} \to V_{k} \) is \'etale
along \( \alpha_{k+1}^{-1}(P) \) for any \( k \geq k_{P} \).
Since \( \Sing X \) is compact, we can find a positive integer \( k_{0} \) such that
\( \gamma_{k} \colon V_{k+1} \to V_{k} \)
is \'etale for any \( k \geq k_{0} \).
\end{proof}

\begin{proof}[Proof of Theorem~\ref{thm:Qab} continued]
% We can apply the claim to the cases \( (\alpha_{k}, \gamma_{k}) = (\theta_{k}, h_{k}) \)
% and \( (\alpha_{k}, \gamma_{k}) = (\tau_{k}, g_{k}) \).
% Hence, \( g_{k} \) and \( h_{k} \) are both \'etale for \( k \gg 0 \).
% We fix such a large positive integer \( k \).
We fix a large positive integer \( k \)
such that \( g_{k} \) and \( h_{k} \) are both \'etale.
We shall show that \( V_{k} \) is smooth. Assume the contrary that
\( \Sing V_{k} \ne \emptyset \).
We set \( d := \dim \Sing V_{k} \). Then \( 0 \leq d \leq n - 2 \).
Since
\( g_{k}^{-1}(\Sing V_{k}) = h_{k}^{-1}(\Sing V_{k}) = \Sing V_{k+1} \),
the mapping degrees of
\( g_{k} \colon \Sing V_{k+1} \to \Sing V_{k} \) and
\( h_{k} \colon \Sing V_{k+1} \to \Sing V_{k} \) are
\( \deg g_{k} \) and \( \deg h_{k} \), respectively.
% We set \( d := \dim \Sing V_{k} \leq n - 2 \).
Then \( d > 0 \); otherwise, we have a contradiction by
\( \sharp \Sing V_{k+1}
= (\deg g_{k})\sharp \Sing V_{k} = (\deg h_{k}) \sharp \Sing V_{k} \)
and \( \deg h_{k} > \deg g_{k} \).
% Thus \( d \geq 1 \).
Let \( A \) be an ample divisor on \( X \) such that \( f^{*}A
\sim \q A \) for an integer \( \q > 1 \).
We set \( A_{l} \) to be the ample divisor
\(\tau_{l}^{*}A \) for any \( l \geq 1 \).
Then \( g_{k}^{*}A_{k} \sim A_{k+1} \) and \( h_{k}^{*}A_{k} \sim \q A_{k+1} \).
Hence, we have the equalities
\begin{align*}
(\Sing V_{k+1})A_{k+1}^{d} &=
(\Sing V_{k+1}) g_{k}^{*}(A_{k})^{d} = (\deg g_{k})(\Sing V_{k})A_{k}^{d} \\
&= \q^{-d}(\Sing V_{k+1})h_{k}^{*}(A_{k})^{d} = \q^{-d}(\deg h_{k})
(\Sing V_{k})A_{k}^{d}
\end{align*}
of intersection numbers.
% since \( g_{k}^{*}A_{k} \sim A_{k+1} \)
% and \( h_{k}^{*}A_{k} \sim \q A_{k+1} \).
Thus, \( (\Sing V_{k})A_k^{d} = 0 \) by
\( \deg h_{k} = (\deg f)(\deg g_{k}) = \q^{n} \deg g_{k}\) and \( d < n \);
this is absurd, since \( A_k \) is ample.
Consequently, \( V_{k} \) is smooth.
Since \( g_{k} \), \(h_{k} \colon V_{k+1} \to V_{k} \) are \'etale
morphisms with \( \deg h_{k} > \deg g_{k} \), we have
\[ c_{1}(V_{k})A_{k}^{n-1} = c_{1}(V_{k})^{2}A_{k}^{n - 2} =
c_{2}(V_{k})A_{k}^{n-2} = 0\]
by a similar calculation of intersection numbers as above.
Then \( c_{1}(V_{k}) \) is numerically trivial
by the hard Lefschetz theorem.
Moreover, the vanishing of
\( c_{2}(V_{k})A_{k}^{n-2} \)
implies that an \'etale covering of \( V_{k} \)
is an abelian variety by \cite{Yau} (cf.\ \cite{Be1}).
Therefore, \( X \) is a Q-abelian variety.
\end{proof}

\begin{remark}
An argument on the Galois closure
in the proof of Theorem~\ref{thm:Qab} is borrowed from
\cite{ENS}.
A result of Campana \cite{Cp04}, Corollary~6.3, gives
another proof of Theorem~\ref{thm:Qab} in the case where
\( K_{X} \sim_{\BQQ} 0 \) and \( X \) has only quotient
singularities.
\end{remark}

Applying Theorems~\ref{thm:nonuniruled} and \ref{thm:Qab},
we have the following partial answer to Conjecture~\ref{conj:Qab}.

\begin{theorem}\label{thm:solQab}
Let \( X \) be a non-uniruled normal projective variety such that
\( \dim X \leq 3 \) or that \( X \) has only quotient singularities.
If \( X \) admits a non-isomorphic polarized endomorphism,
then \( X \) is Q-abelian.
\end{theorem}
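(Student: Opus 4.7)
The plan is to reduce both cases to Theorem~\ref{thm:Qab}. Since \(f\) is a non-isomorphic polarized endomorphism, one has \(f^{*}H \sim \q H\) for an ample divisor \(H\) and a positive integer \(\q\); because \(\deg f = \q^{n} > 1\) by Lemma~\ref{lem:first}, necessarily \(\q \geq 2\). Applying the final assertion of Theorem~\ref{thm:nonuniruled} to the non-uniruled \(X\) with this ample \(H\), we conclude at once that \(X\) has only canonical singularities, that \(K_{X} \sim_{\BQQ} 0\), and that \(f\) is \'etale in codimension one. In view of Theorem~\ref{thm:Qab}, the remaining task is to produce, for every \(P \in \Sing X\), a connected analytic open neighborhood \(\SU\) of \(P\) such that \(\pi_{1}^{\alg}(\SU_{\reg})\) is finite.

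In the quotient singularity case the verification is essentially automatic: every \(P \in \Sing X\) admits an analytic neighborhood of the form \(\SU \isom \BCC^{n}/G\) with \(G \subset GL_{n}(\BCC)\) finite, so that \(\pi_{1}^{\alg}(\SU_{\reg})\) is a quotient of \(G\) and hence finite.

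For the case \(\dim X \leq 3\), dimensions one and two reduce to the previous paragraph, since in dimension one \(X\) is smooth and in dimension two canonical singularities are Du Val, i.e.\ quotient singularities by finite subgroups of \(SU(2)\). The substantive subcase is \(\dim X = 3\), where the plan is to pass first to the global index-one cover \(\widehat{X} \to X\) — which is \'etale in codimension one and to which \(f\) lifts, since \(f^{*}K_{X} \sim K_{X}\) — so that the three-fold singularities in question become canonical Gorenstein. One then invokes Reid's classification of canonical three-fold singularities: the terminal strata are cyclic quotients of isolated cDV hypersurface singularities (and the link of an isolated three-dimensional hypersurface singularity is simply connected by Milnor), while the non-terminal strata are handled via crepant partial resolutions (terminalizations) to terminal three-folds, so that the local algebraic fundamental group can be verified to be finite case by case.

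The main obstacle is precisely this last analysis in the non-terminal canonical three-fold case. For quotient, Du Val, and terminal three-fold singularities the finiteness of the local algebraic fundamental group is classical; in the remaining canonical Gorenstein three-fold case one needs genuine input from Reid's classification and the topology of crepant partial resolutions, rather than a soft consequence of the preceding theory. Once that local finiteness is secured, Theorem~\ref{thm:Qab} immediately delivers that \(X\) is Q-abelian in both settings.
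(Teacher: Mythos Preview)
Your overall architecture matches the paper exactly: apply Theorem~\ref{thm:nonuniruled} to get canonical singularities and \(f\) \'etale in codimension one, then feed into Theorem~\ref{thm:Qab} by verifying the local finiteness of \(\pi_{1}^{\alg}(\SU_{\reg})\). The quotient-singularity case and the reduction of \(\dim X \leq 2\) to quotient singularities are handled just as the paper does.

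The difference, and the gap, is in \(\dim X = 3\). The paper does not attempt any case analysis: it simply invokes Shepherd-Barron and Wilson (\cite{SW}, Theorem~3.6), which establishes finiteness of the local algebraic fundamental group for all canonical three-fold singularities in one stroke. Your proposal instead tries to rebuild this result by hand---index-one cover, Milnor's theorem for isolated cDV points in the terminal stratum, terminalization for the rest---but you explicitly stop short at the non-terminal canonical Gorenstein case, calling it ``the main obstacle'' and saying only that ``genuine input'' from Reid's classification is needed. That is not a proof; it is an acknowledgment that the key step is missing. The terminalization strategy you sketch is roughly how \cite{SW} proceeds, but carrying it out requires controlling how \(\pi_{1}^{\alg}\) of the smooth locus changes under a crepant partial resolution whose exceptional locus may be positive-dimensional, and that analysis is nontrivial. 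Either cite \cite{SW} (as the paper does) or actually execute the argument; as written, the three-dimensional case is incomplete.

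One minor remark: the detour through the global index-one cover is harmless but unnecessary. Since \(\widehat{X} \to X\) is \'etale in codimension one, the local groups \(\pi_{1}^{\alg}(\SU_{\reg})\) on \(X\) and on \(\widehat{X}\) differ only by a finite-index subgroup, so finiteness upstairs and downstairs are equivalent; but the result you need (\cite{SW}) already applies to arbitrary canonical three-fold singularities, Gorenstein or not.
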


\begin{proof}
By Theorem~\ref{thm:Qab}, it is enough to show that
any singular point has a connected analytic
open neighborhood \( \SU \) such that
\( \pi_{1}^{\alg}(\SU_{\reg}) \) is finite.
If \( X \) has only quotient singularities, then this is true.
We know that \( X \) has only canonical singularities
by Theorem~\ref{thm:nonuniruled}.
If \( \dim X \leq 2\), then \( X \) has only quotient singularities.
If \( \dim X = 3 \), then the finiteness of \( \pi_{1}^{\alg}(\SU_{\reg}) \)
is proved in \cite{SW}, Theorem~3.6.
Thus, we are done.
\end{proof}

Even though Conjecture~\ref{conj:Qab} is still open,
we have the following:

\begin{proposition}\label{prop:qXfqX}
Let \( X \) be a normal projective variety and \( f \colon X \to X \)
a polarized endomorphism of \( X \)
such that \( \deg f = \q^{\dim X} \) for an integer \( \q \ge 1 \).
Assume that \( X \) has only log-terminal singularities and
\( K_{X} \sim_{\BQQ} 0 \). Then there exist a finite covering
\( \tau \colon A \times S \to X \) \'etale in codimension one for an abelian variety
\( A \) and a weak Calabi--Yau variety \( S \), and polarized endomorphisms
\( f_{A} \colon A \to A \), \( f_{S} \colon S \to S \) such that
\( \deg f_{A} = \q^{\dim A}\), \(\deg f_{S} = \q^{\dim S} \), and
\( \tau \circ (f_{A} \times f_{S}) = f \circ \tau \)\emph{:}
\[ \begin{CD}
A \times S @>{f_{A} \times f_{S}}>> A \times S \\
@V{\tau}VV @V{\tau}VV\\
X @>{f}>> \phantom{.}X.
\end{CD} \]
In particular,
\( q^{\circ}(X) = q^{\natural}(X) = q^{\circ}(X, f) = q^{\natural}(X, f)
= q(A) = \dim A\).
\end{proposition}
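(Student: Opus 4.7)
The plan has three stages built on Lemma~\ref{lem:AlbClos} (Albanese closure) and Lemma~\ref{split} (splitting endomorphisms on products). First, since $K_X \sim_{\BQQ} 0$, the ramification formula $K_X = f^*K_X + R_f$ forces $R_f \sim_{\BQQ} 0$; as $R_f$ is effective and integral, $R_f = 0$, so $f$ is \'etale in codimension one.

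In the second stage I lift $f$ to the Albanese closure $\widetilde{\tau} \colon \widetilde{X} \to X$ provided by Lemma~\ref{lem:AlbClos}: the cover $\widetilde{X} \to X$ is finite Galois, \'etale in codimension one, with $q(\widetilde{X}) = q^{\circ}(X)$. The composition $f \circ \widetilde{\tau} \colon \widetilde{X} \to X$ is another cover of $X$ satisfying conditions \eqref{lem:AlbClos:1}--\eqref{lem:AlbClos:2} of Lemma~\ref{lem:AlbClos}, so the universal property \eqref{lem:AlbClos:4} yields a finite surjective morphism $f_{\widetilde{X}} \colon \widetilde{X} \to \widetilde{X}$ with $\widetilde{\tau} \circ f_{\widetilde{X}} = f \circ \widetilde{\tau}$. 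Since $\widetilde{\tau}^*H$ is ample and $f_{\widetilde{X}}^*(\widetilde{\tau}^*H) \sim \q\, \widetilde{\tau}^*H$, the map $f_{\widetilde{X}}$ is polarized by the same integer $\q$, and Lemma~\ref{lem:first} gives $\deg f_{\widetilde{X}} = \q^{\dim X}$. Proceeding as in the proof of Proposition~\ref{prop:split}, the Albanese map $\alpha \colon \widetilde{X} \to A := \Alb(\widetilde{X})$ is an \'etale fiber bundle (Kawamata) whose fiber $S$ is a weak Calabi--Yau variety, and $f_{\widetilde{X}}$ descends along $\alpha$ to a polarized endomorphism $f_A$ of $A$.

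The third stage chooses an isogeny $A' \to A$ that simultaneously (a) trivializes the bundle, so $\widetilde{X} \times_A A' \isom S \times A'$, and (b) is $f_A$-equivariant, so that $f_A$ lifts to $A'$. Condition (a) corresponds to some finite-index sublattice $L_0 \subset H_1(A, \BZZ) = \pi_1(A)$, while (b) amounts to finding a finite-index $L \subseteq L_0$ with $f_{A*}(L) \subseteq L$. I take $L := \bigcap_{k \geq 0} f_{A*}^{-k}(L_0)$: this is $f_{A*}$-stable by construction, and has finite index because $m \cdot H_1(A, \BZZ) \subseteq L$ for $m := [H_1(A, \BZZ) : L_0]$, using that $f_{A*}^k(m \cdot H_1(A, \BZZ)) \subseteq m \cdot H_1(A, \BZZ) \subseteq L_0$ for every $k \geq 0$. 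Set $V' := \widetilde{X} \times_A A' \isom S \times A'$; the induced morphism $V' \to X$ is \'etale in codimension one and $f_{\widetilde{X}}$ lifts to an endomorphism $f_{V'}$ of $V'$. Since $S$ is weak Calabi--Yau, hence $q(S) = 0$ and non-uniruled, Lemma~\ref{split} splits $f_{V'} = f_S \times f_{A'}$. Because $q(S) = 0$, the product Picard group decomposes as $\Pic(S) \oplus \Pic(A')$, so the polarization on $V'$ pulled back from $X$ splits as $p_S^*H_S + p_{A'}^*H_{A'}$; matching $f_{V'}^*((\tau')^*H) \sim \q (\tau')^*H$ on each summand gives $f_S^*H_S \sim_{\BQQ} \q H_S$ and $f_{A'}^*H_{A'} \sim_{\BQQ} \q H_{A'}$, and Lemma~\ref{lem:first} then produces $\deg f_S = \q^{\dim S}$ and $\deg f_{A'} = \q^{\dim A'} = \q^{\dim A}$. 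Renaming $A := A'$ yields the desired diagram, while the cover $V' \to X$ with lift $f_S \times f_{A'}$ gives $q^{\circ}(X, f) \geq q(V') = \dim A$; this combined with $q^{\circ}(X, f) \leq q^{\circ}(X) = \dim A$ (Lemma~\ref{lem:circsharp1} and the construction of the Albanese closure) forces the whole chain of equalities.

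The main obstacle is Stage 3, specifically arranging an isogeny $A' \to A$ that both trivializes the Albanese bundle and is $f_A$-equivariant. Once the lattice-theoretic observation that $L = \bigcap_{k \geq 0} f_{A*}^{-k}(L_0)$ has finite index is in hand, the remaining work — splitting via Lemma~\ref{split} and extracting the polarization and degree on each factor via Lemma~\ref{lem:first} — is essentially formal.
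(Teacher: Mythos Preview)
Your overall strategy matches the paper's: lift \(f\) through covers \'etale in codimension one until the Albanese map trivializes to a product, then split the endomorphism via Lemma~\ref{split}. Your lattice argument in Stage~3 is essentially the content of \cite{NZ}, Lemma~4.9 (which the paper cites), and your Picard-splitting argument for the factor polarizations is a valid alternative to the paper's appeal to \cite{IntS}, Proposition~4.17.

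However, there is a genuine gap: you skip the global index-one cover (the paper's Step~1). Your Albanese closure \(\widetilde{X}\) inherits only log-terminal singularities with \(K_{\widetilde{X}} \sim_{\BQQ} 0\), not canonical singularities with \(K_{\widetilde{X}} \sim 0\). The paper's invocation of \cite{Ka85}, Theorem~8.3 (that the Albanese map is an \'etale fibre bundle) is made only after reducing to the canonical, \(K \sim 0\) case; as cited, it does not apply directly to your \(\widetilde{X}\). More importantly, even if one grants that the Albanese map of \(\widetilde{X}\) is a bundle, the fibre \(S\) would then be log-terminal with \(K_S \sim_{\BQQ} 0\), which is \emph{not} the definition of weak Calabi--Yau (Definition~\ref{dfn:wCY} requires canonical singularities and \(K_S \sim 0\)). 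So as written your argument does not produce the \(S\) promised by the proposition.

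The fix is exactly the paper's Step~1: first lift \(f\) to the index-one cover \(\widehat{X} \to X\) (using that this cover is characterised by its minimality among covers \'etale in codimension one with trivial canonical divisor, so the normalized fibre product with \(f\) reproduces it), and only then run your Stages~2 and 3 starting from \(\widehat{X}\). Your use of the universal property \eqref{lem:AlbClos:4} of the Albanese closure to lift \(f\) (exploiting \(R_f = 0\)) is a nice shortcut compared to the paper's fibre-product argument in Step~2, and it survives unchanged once you work over \(\widehat{X}\).
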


\begin{proof}
By Lemma~\ref{lem:first} and its corollary,
there is an ample divisor \( H \) on \( X \) such that \( f^{*}(H) \sim \q H \).

\emph{Step}~1. \emph{Reduction to the case
where \( X \) has only canonical singularities with \( K_{X} \sim 0 \)}:
Let \( \nu \colon \widehat{X} \to X \) be the global index-one cover, i.e.,
the minimal cyclic covering satisfying \( K_{\widehat{X}} \sim 0 \)
(cf.\ the proof of Proposition~\ref{prop:split}).
Then \( \widehat{X} \) has only canonical singularities
by \cite{Ka}, Proposition~1.7.
By the uniqueness of the global index-one cover, there is an
endomorphism \( \hat{f} \colon \widehat{X} \to \widehat{X}\) such that
\( \nu \circ \hat{f} = f \circ \nu \). This is shown as follows:
For the normalization \( X^{\flat} \) of
the fiber product \( \widehat{X} \times_{X} X \) of
\( \nu \) and \( f \) over \( X \), let
\( p^{\flat}_{1} \colon X^{\flat} \to \widehat{X} \) and
\( p^{\flat}_{2} \colon X^{\flat} \to X \) be the morphisms induced from the first and
second projections, respectively.
Then, the restriction \( X^{\flat}_{i} \to \widehat{X} \) of \( p_{1}^{\flat} \) to
any connected component \( X^{\flat}_{i} \) of \( X^{\flat} \) is
finite and surjective, since so is \( f \).
Thus, pulling back a nowhere vanishing section of
\( \SO_{\widehat{X}}(K_{\widehat{X}}) \),
we have a holomorphic section of \( \SO_{X^{\flat}}(K_{X^{\flat}}) \),
which is not zero on each connected component of \( X^{\flat} \).
On the other hand, \( p^{\flat}_{2} \)
is \'etale in codimension one, since so is \( \nu \).
Hence, \( K_{X^{\flat}} \sim 0 \).
Noting that \( \deg p^{\flat}_{2} = \deg \nu \) and
by the minimality and the uniqueness of the
global-index one covering, we infer that \( X^{\flat} \) is irreducible and that
\( p^{\flat}_{2} \colon X^{\flat} \to X \) is isomorphic to
the global index-one covering \( \nu \colon \widehat{X} \to X \) over \( X \).
Thus, \( p_{1}^{\flat} \) produces an endomorphism
\( \hat{f} \colon \widehat{X} \to \widehat{X} \)
satisfying \( \nu \circ \hat{f} = f \circ \nu \).
Then, \( \hat{f} \) is a polarized endomorphism with
\( \hat{f}^{*}(\nu^{*}(H)) \sim \q \nu^{*}(H) \), since \( f^{*}(H) \sim \q H \).
Therefore, we may assume that \( X \) has only canonical singularities
with \( K_{X} \sim 0 \) by replacing \( (X, f) \) with \( (\widehat{X}, \hat{f}) \).
Note that the replacement does not affect
the last equalities of Proposition~\ref{prop:qXfqX}
by Lemma~\ref{lem:circsharp1}.

\emph{Step}~2. \emph{Reduction to the case where \( q^{\circ}(X) = q(X) \)}:
Let \( \lambda \colon X\sptilde \to X \)
be the Albanese closure of \( X \) in codimension one
defined in Lemma~\ref{lem:AlbClos}.
By the uniqueness of \( \lambda \), \( X\sptilde \)
admits an endomorphism \( \tilde{f} \)
such that \( \lambda \circ \tilde{f} = f \circ \lambda \).
This is shown as follows:
For  the normalization \( X^{\sharp} \) of
the fiber product \( X\sptilde \times_{X} X \) of \( \lambda \)
and \( f \), let \( p^{\sharp}_{1} \colon X^{\sharp} \to X\sptilde \) and
\( p^{\sharp}_{2} \colon X^{\sharp} \to X \) be the morphisms
induced from the first and second projections, respectively.
Then the restriction \( X^{\sharp}_{i} \to X\sptilde \)
of the morphism \( p^{\sharp}_{1} \) to
any connected component \( X^{\sharp}_{i} \) of \( X^{\sharp} \)
is a finite surjective morphism, since so is \( f \).
Thus, \( q(X^{\sharp}_{i}) \geq q(X\sptilde) = q^{\circ}(X) \).
On the other hand, \( p^{\sharp}_{2} \) is \'etale in codimension one,
since so is \( X\sptilde \to X \).
By Lemma~\ref{lem:AlbClos},
the restriction \( X^{\sharp}_{i} \to X \) of \( p_{2}^{\sharp} \) to
\( X^{\sharp}_{i} \) factors through \( \lambda \colon X\sptilde \to X \).
Thus, \( \deg (X^{\sharp}_{i} / X) \geq \deg(X\sptilde/X) \).
Since \( \deg(X\sptilde/X) = \deg(X^{\sharp}/X)
= \sum_{i} \deg(X^{\sharp}_{i}/X) \), we infer that \( X^{\sharp} \) is irreducible and
\( p^{\sharp}_{2} \colon X^{\sharp} \to X\)
is isomorphic to the Albanese closure \( \lambda \colon X\sptilde \to X \)
in codimension one over \( X \).
Thus, \( p^{\sharp}_{1} \colon X^{\sharp} \to X\sptilde \) produces
an endomorphism \( \tilde{f} \colon X\sptilde \to X\sptilde\) satisfying
\( \lambda \circ \tilde{f} = f \circ \lambda \).
Then, \( \tilde{f} \) is a polarized endomorphism with
\( \tilde{f}^{*}(\lambda^{*}(H)) \sim \q\lambda^{*}(H) \), since
\( f^{*}(H) \sim \q H \).
Therefore, we may assume that \( q^{\circ}(X) = q(X) \)
by replacing \( (X, f) \) with \( (X\sptilde, \tilde{f}) \).
Note again that the replacement does not affect
the last equalities of Proposition~\ref{prop:qXfqX}
by Lemma~\ref{lem:circsharp1}.

\emph{Step}~3. \emph{The final step}:
We may assume that \( X \) has only canonical singularities, \( K_{X} \sim 0 \),
and \( q^{\circ}(X) = q(X) \), by the previous steps.
If \( q(X) = 0 \), then \( X \) is weak Calabi--Yau, and
Proposition~\ref{prop:qXfqX} holds in this case.
Thus, we may assume that \( q(X) > 0 \).
Let \( \alpha \colon X \to A := \Alb(X) \) be the Albanese map.
Then, there is an endomorphism \( f'_{A} \colon A \to A\)
such that \( \alpha \circ f = f'_{A} \circ \alpha \) by the universality of
the Albanese map. By \cite{Ka85}, Theorem~8.3, we can find an \'etale covering
\(\theta \colon T \to A\)
such that \( X \times_{A} T \isom S \times T\)
over \(T\) for a fiber \(S\) of \( \alpha \).
Here, \(S\) is weak Calabi--Yau by the definition of \(q^{\circ}(X)\).
Taking a further \'etale covering,
we may assume that \(T \isom A\) and \(\theta \colon T \to A\)
is just the multiplication map by a positive integer \( m \) for
a certain group structure of \( A \).
There is an endomorphism \( f_{A} \) of
\( A \) such that \( \theta \circ f_{A} = f'_{A} \circ \theta\)
by \cite{NZ}, Lemma~4.9.
Let \( W \) be the fiber product \( X \times_{A} A \)
of \( \alpha \colon X \to A \) and \( \theta \colon A \to A \) over \( A \), %%
and let \( \varphi \colon W \to X \) be
the finite \'etale covering induced from the first projection.
Then \( W \isom S \times A \) over \( A \) as above,
and \( f \times f_{A} \colon X \times A \to X \times A\)
induces an endomorphism \( f_{W} \) of \( W \subset X \times A\)
such that \( \varphi \circ f_{W} = f \circ \varphi\).
In particular,
\( f_{W} \) is a polarized endomorphism with
\( f_{W}^{*}(\varphi^{*}(H)) \sim \q\varphi^{*}(H) \) for the ample divisor
\( \varphi^{*}(H) \).
We have an endomorphism \(f_S \colon S \to S\) such that
\(f_W = f_S \times f_A\) by Lemma~\ref{split}.
Then, \( f_{S} \) and \( f_{A} \) are polarized endomorphisms
with \( \deg f_{A} = \q^{\dim A}\) and \(\deg f_{S} = \q^{\dim S} \)
by \cite{IntS}, Proposition~4.17.
% and Theorem~4.4.
It remains to show the last equalities.
We have \( q^{\natural}(X) = q^{\circ}(X) \geq
q^{\natural}(X, f) = q^{\circ}(X, f) \) by
Lemma~\ref{lem:circsharp1}, and
\( q^{\circ}(A \times S) = q(A) = \dim A\)
by Corollary~\ref{cor:q0CY}.
In view of the covering
\( A \times S \to X \) \'etale in codimension one,
we have
\[ q(A) \leq q^{\circ}(A \times S, f_{A} \times f_{S}) = q^{\circ}(X, f) \leq
q^{\circ}(X) = q^{\circ}(A \times S) = q(A). \]
Thus, the expected equalities also hold.
\end{proof}

Theorem~\ref{Th0} for non-uniruled \( X \) is a consequence of
Theorems~\ref{thm:nonuniruled} and \ref{thm:solQab}, and
Proposition~\ref{prop:qXfqX}.

\section{The proof of Theorems~\ref{Th0} and \ref{Th1.2new}}

The following result gives a descent property of polarized
endomorphisms by maximal rationally connected fibrations,
which is proved in \cite{IntS}, Section~4.3.

\begin{lemma}[{\cite{IntS}, Corollary~4.20}]\label{lem:MRC}
Let \( f \colon X \to X \) be a quasi-polarized endomorphism of a
normal projective variety \( X \). Suppose that \( X \) is uniruled.
Then there exist a birational morphism \( \sigma \colon W \to X \),
an equi-dimensional surjective morphism \( p \colon W \to Y \),
and quasi-polarized endomorphisms \( f_{W} \colon W \to W \), \( f_{Y}
\colon Y \to Y \)
satisfying the following conditions\emph{:}
\begin{enumerate}
\item  \( W \) and \( Y \) are normal projective varieties,
and \( \dim Y < \dim X \).

\item  \( Y \) is not uniruled if \( \dim Y > 0 \).

\item  A general fiber of \( p \) is rationally connected.

\item \( \deg f_{Y} = (\deg f)^{\dim Y/\dim X} \).

\item \( \sigma \circ f_{W} = f \circ \sigma\) and
\( p \circ f_{W} = f_{Y} \circ p \), i.e., the diagram below is commutative\emph{:}
\[ \begin{CD}
Y @<{p}<< W @>{\sigma}>> X \\
@V{f_{Y}}VV @V{f_{W}}VV @V{f}VV \\
Y @<{p}<< W @>{\sigma}>> \phantom{.}X.
\end{CD}\]

\item If \( f \) is polarized, then both \( f_W \) and \( f_{Y} \) are polarized.
\end{enumerate}

\end{lemma}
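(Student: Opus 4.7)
The plan is to realize $Y$ as (a normal birational model of) the MRC base of a smooth model of $X$, and then to use both the non-uniruledness of that base and the polarization of $f$ to descend $f$ to an endomorphism of $Y$ and to control the fiber dimension of the descent.

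Fix a smooth model $\widetilde{X} \to X$ and let $\widetilde{X} \ratmap Y_{0}$ be its MRC fibration; by \cite{GHS}, $Y_{0}$ is not uniruled. A standard rigidity argument gives the rational descent of $f$: a general (rationally connected) fiber $F$ is sent by $f$ into a rationally connected subvariety of $\widetilde{X}$, whose image in the non-uniruled base $Y_{0}$ must be a single point. This produces a dominant rational self-map $f_{Y_{0}} \colon Y_{0} \ratmap Y_{0}$. After replacing $Y_{0}$ by a suitable normal projective model $Y$ over which both $\widetilde{X} \ratmap Y_{0}$ and $f_{Y_{0}}$ become morphisms, I take $W$ to be the normalization of the closure of the graph of $X \ratmap Y$ in $X \times Y$. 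The first projection supplies the required birational morphism $\sigma \colon W \to X$, the second projection gives a surjective morphism $p \colon W \to Y$ with rationally connected general fibers, and, since $f \times f_{Y}$ preserves the graph, it induces the desired endomorphism $f_{W}$ simultaneously lifting $f$ and $f_{Y}$.

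The crucial and technically hardest step is the equi-dimensionality of $p$, which genuinely uses the polarization $f^{*}H \sim \q H$ rather than only rational connectedness of the fibers. The compatibility $p \circ f_{W} = f_{Y} \circ p$ forces the fiber-dimension jumping locus in $Y$ to be $f_{Y}$-invariant, and intersection-theoretic comparisons between $\sigma^{*}H$ and $p^{*}A_{Y}$ using the polarization constant $\q \geq 2$ then preclude any such invariant locus of positive dimension. Equivalently, one applies Raynaud--Gruson flattening to $p$, observes that the flattening modification is compatible with $f_{W}$, and absorbs it into the models $W$ and $Y$ without destroying the other properties; the detailed argument is carried out in \cite{IntS}, Section~4.3.

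The remaining assertions then become formal. Equi-dimensionality together with $f_{W}^{*} p^{*} = p^{*} f_{Y}^{*}$ gives $\deg f_{Y} = \q^{\dim Y}$ by comparing top self-intersections of $p^{*}A_{Y}$, and combined with $\deg f = \q^{\dim X}$ from Lemma~\ref{lem:first} this yields $\deg f_{Y} = (\deg f)^{\dim Y / \dim X}$. A quasi-polarization of $f_{Y}$ is produced by pushing down $\sigma^{*}H$ to a nef and big $\BQQ$-divisor $A_{Y}$ on $Y$ with $f_{Y}^{*}A_{Y} \numeq \q A_{Y}$; when $f$ is polarized, the exponential-sequence argument of Lemma~\ref{lem:qpol} applied to $f_{Y}$ promotes this to a genuine polarization, since the eigenvalues of $f_{Y}^{*}$ on $\OH^{1}(Y, \SO_{Y})$ all have absolute value $\sqrt{\q}$.
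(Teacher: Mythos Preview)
Your outline has the right ingredients but the wrong logical order, and this creates a genuine circularity at the heart of the argument.

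In the paper's approach (following \cite{IntS}), the equi-dimensional model \( Y \) is constructed \emph{first}, intrinsically from the rational map \( X \ratmap Y \), with no reference to \( f \): among all birational models of the MRC base, there is a unique minimal one over which the graph \( \Gamma_{X/Y} \) is equi-dimensional and through which every other equi-dimensional model factors (\cite{IntS}, Proposition~4.14). It is precisely this uniqueness that forces \( f \) to descend to a \emph{morphism} \( f_{Y} \colon Y \to Y \) (\cite{IntS}, Theorem~4.19). Only afterwards is the (quasi-)polarizing divisor \( H_{Y} \) on \( Y \) produced, and this is done via the intersection-sheaf construction, which associates to \( H \) an honest ample (resp.\ nef and big) line bundle on the base of an equi-dimensional (not necessarily flat) morphism (\cite{IntS}, Corollary~4.20). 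Note that all of this works even when \( \q = 1 \), i.e., when \( f \) is an automorphism.

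You instead descend \( f \) rationally, blow up \( Y \) to make \( f_{Y} \) holomorphic, and only then try to obtain equi-dimensionality by arguing that the fiber-jumping locus is \( f_{Y} \)-invariant and hence empty. But that last step requires \( f_{Y} \) to already be polarized with \( \q \geq 2 \) (exactly as in \emph{Step}~1 of Lemma~\ref{lem:P1bdl}), which you have not yet established; and your proposed construction of \( A_{Y} \) by ``pushing down \( \sigma^{*}H \)'' is not well defined along a non-flat \( p \). The Raynaud--Gruson suggestion does not escape this: flattening replaces \( Y \) by a blow-up \( Y' \), and there is no reason \( f_{Y} \) lifts to a \emph{morphism} of \( Y' \), so you are back where you started. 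This tension between ``make \( f_{Y} \) a morphism'' and ``make \( p \) equi-dimensional'' is exactly the difficulty flagged in Remark~\ref{rem:DelignePair}; the intersection-sheaf machinery is what resolves it, and your sketch does not provide a substitute.
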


An outline of the proof of Lemma~\ref{lem:MRC} is as follows:
We take a dominant rational map \(X \ratmap Y\)
which is birational to the maximal rationally connected fibration
\( \widetilde{X} \ratmap \widetilde{Y} \)
(cf.\ \cite{Cp92}, \cite{KoMM}, \cite{GHS})
of a smooth model \( \widetilde{X} \) of \( X \).
It is determined uniquely up to birational equivalence
by the property that \( Y \) is not uniruled
(when \( \dim Y > 0 \)) %%%
and a general `fiber' of \( X \ratmap Y \) is rationally connected.
Among the choices of the rational maps
\( X \ratmap Y \), we can select a unique one up to isomorphism
by the following two properties:
\begin{itemize}
\item  The graph \(\Gamma_{X/Y}\) of \(X \ratmap Y\) is equi-dimensional over \(Y\).

\item  If \(\nu \colon Y' \ratmap Y\) is a birational map
from another normal projective variety \(Y'\)
such that the graph \(\Gamma_{X/Y'}\) of the composite
\(X \ratmap Y \overset{\nu^{-1}}{\ratmap} Y'\) of rational maps
is equi-dimensional over \(Y'\),
then \(\nu\) is holomorphic.
\end{itemize}
The existence and the uniqueness of \( X \ratmap Y \) is proved in
\cite{IntS}, Proposition~4.14 (cf.\ \cite{IntS}, Theorem~4.18).
The proof uses the notion of \emph{intersection sheaves},
which we do not explain here.
The variety \( W \) is just the normalization of \( \Gamma_{X/Y} \).
The endomorphism \( f \) descends to an endomorphism \( f_{Y} \) of \( Y \) by
\cite{IntS}, Theorem~4.19.
If \(f\) is polarized (resp. quasi-polarized) then so is \( f_{Y} \) by
\cite{IntS}, Corollary~4.20; more precisely,
if \( f^{*}(H) \sim \q H \) for an ample (resp.\ a nef and big)
divisor \( H \) on \( X \),
where \( \q = (\deg f)^{1/\dim X}\), %%%
then \( f_{Y}^{*}(H_{Y}) \sim \q H_{Y} \) for
an ample (resp.\ a nef and big) divisor \( H_{Y} \) on \( Y \).
The proofs of two assertions also use the notion of
intersection sheaves.
The endomorphism \( f_{W} \) of \( W \) is induced from
\(f \times f_{Y} \).
Since \( f^{*}(H) \sim \q H \), we have %%%
\( f_{W}^{*}(H_{W}) \sim \q H_{W} \) for the ample (resp.\ nef and big) divisor
\( H_{W} = \sigma^{*}(H) + p^{*}(H_{Y})\) for
the induced morphisms \( \sigma \colon W \to X \) and \( p \colon W \to Y \).
Thus, \( f_{W} \) is also polarized (resp.\ quasi-polarized).

\begin{rremark}\label{rem:DelignePair}
The same assertion as in Lemma~\ref{lem:MRC} %%%
for polarized endomorphisms is stated in \cite{Zs},
Proposition~2.2.4.
However, the argument there is valid
only when the maximal rationally connected fibration
is flat,  which is not a priori  available.
The study of intersection sheaves in \cite{IntS}
renders the flatness requirement redundant,
and consequently the expected assertion
is proved in \cite{IntS}, Section~4.3.
\end{rremark}

\begin{rremark}
In the situation of Lemma~\ref{lem:MRC},
assume that \( \deg f > 1\). Then, %added
there exist a birational morphism \( Y \to Y' \) onto
a normal projective variety \( Y' \) with only canonical
singularities such that \( K_{Y'} \sim_{\BQQ} 0 \) and
a polarized endomorphism
\( f_{Y'} \colon Y' \to Y' \) compatible with \( f_{Y} \) by
Theorem~\ref{thm:nonuniruled}.
Applying \cite{Fa}, Theorem~5.1 to \( f_{Y'} \),
we infer that the set \( \SY \)
of periodic points of \( f_{Y} \) is Zariski dense in \( Y \).
Here, \( y \in \SY \) if and only if
\( f_{Y}^{r}(y) = y \) for a positive integer \( r = r(y)\).
Thus, if \( y \in \SY \) is general, then
a multiple of \( f \) induces a non-isomorphic
quasi-polarized endomorphism of
the rationally connected 
%normal 
variety \( p^{-1}(y) \).
Hence the study of quasi-polarized endomorphisms on uniruled varieties
is reduced, to some extent, to that on rationally connected varieties.
%
% Thus, \( f_W^{r} \) for the same \( r \) induces a quasi-polarized endomorphism of
% the fiber \( p^{-1}(y) \) over \( y \in \SY\).
% Note that \( p^{-1}(y) \) for general \( y \in \SY \)
% is a rationally connected normal variety.
\end{rremark}

\begin{lemma}\label{lem:qYqXPart}
In the situation of \emph{Lemma~\ref{lem:MRC}},
assume that \( f  \) is
%non-isomorphic and
polarized.
Let \( \theta \colon Y' \to Y \) be a finite covering
\'etale in codimension one from a normal
variety \( Y' \) and let \( f_{Y'} \colon Y' \to Y' \) be
a polarized endomorphism such that \( \theta \circ f_{Y'} = f_{Y} \circ \theta \).
Then there exist normal projective varieties \( X' \), \( W' \),
finite coverings \( \tau \colon X' \to X \) and \( \delta \colon W' \to W \)
both \'etale in codimension one,
a birational morphism \( \sigma' \colon W' \to X' \),
a fibration \( p' \colon W' \to Y' \) whose general fiber is rationally connected,
and polarized endomorphisms \( f' \colon X' \to X' \), \( f_{W'} \colon W' \to W' \)
such that
\( \tau \circ f' = f \circ \tau\),
\( \sigma' \circ f_{W'} = f' \circ \sigma'\),
\( \delta \circ f_{W'} = f_{W} \circ \delta \), and
\( p' \circ f_{W'} = f_{Y'} \circ p' \)\emph{;} hence,
the diagram below is commutative and all the varieties
admit mutually compatible polarized endomorphisms\emph{:}
\[ \begin{CD}
Y' @<{p'}<< W' @>{\sigma'}>> X' \\
@V{\theta}VV @V{\delta}VV @V{\tau}VV \\
Y @<{p}<< W @>{\sigma}>> \phantom{.}X.
\end{CD} \]
In particular, \( \sigma' \), \( p' \), \( f_{W'} \), and \( f_{Y'} \)
satisfy the same conditions as in
\emph{Lemma~\ref{lem:MRC}} for \( f' \colon X' \to X' \).
\end{lemma}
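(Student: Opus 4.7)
The plan is to construct $W'$ as the normalization of the fiber product $W \times_{Y} Y'$ (more precisely, of the irreducible component dominating $Y'$) and then to obtain $X'$ via Stein factorization. Let $\delta \colon W' \to W$ and $p' \colon W' \to Y'$ be the morphisms induced by the two projections, and let the composite $\sigma \circ \delta \colon W' \to X$ factor as $W' \xrightarrow{\sigma'} X' \xrightarrow{\tau} X$ via Stein factorization, with $X'$ normal, $\sigma'$ having connected fibers, and $\tau$ finite. This yields all the desired morphisms and makes the left and right squares of the diagram commute by construction.

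Next I would verify the geometric properties. The branch locus $\Sigma \subset Y$ of $\theta$ has codimension at least two; the equi-dimensionality of $p$ established in Lemma~\ref{lem:MRC} then ensures that $p^{-1}(\Sigma) \subset W$ has codimension at least two, so $\delta$ is étale in codimension one. A general fiber of $p'$ is an étale cover of (in fact isomorphic to) a general fiber of $p$, and hence is rationally connected. A degree count shows that $\sigma \circ \delta \colon W' \to X$ has degree $\deg \theta$ (since $\sigma$ is birational), forcing $\sigma'$ to be birational and $\deg \tau = \deg \theta$; comparing $\tau \circ \sigma' = \sigma \circ \delta$ over the open set where $\sigma$ restricts to an isomorphism then shows that $\tau$ is étale in codimension one.

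Finally, I would construct the endomorphisms. The product endomorphism $f_{W} \times f_{Y'}$ on $W \times Y'$ preserves the closed subvariety $W \times_{Y} Y'$, thanks to $p \circ f_{W} = f_{Y} \circ p$ and $\theta \circ f_{Y'} = f_{Y} \circ \theta$, and therefore lifts through normalization to a self-map of (the disjoint union of components of) $W'$. The main obstacle is to verify that the chosen irreducible component is actually preserved rather than permuted with another; I would handle this in the same style as Step~2 of the proof of Proposition~\ref{prop:qXfqX}, by arguing that the component is uniquely determined by its covering degree and by the subgroup of $\pi_{1}(W_{\reg})$ associated to the pullback cover, together with the polarization. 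This produces $f_{W'} \colon W' \to W'$ satisfying $\delta \circ f_{W'} = f_{W} \circ \delta$ and $p' \circ f_{W'} = f_{Y'} \circ p'$. Stein-factorizing $\sigma \circ f_{W} \circ \delta = f \circ \sigma \circ \delta \colon W' \to X$ through $\tau$ then yields the unique endomorphism $f' \colon X' \to X'$ with $\tau \circ f' = f \circ \tau$ and $\sigma' \circ f_{W'} = f' \circ \sigma'$. Polarization is inherited by pullback: if $f^{*}H \sim \q H$ for an ample divisor $H$ on $X$, then $\tau^{*}H$ is ample with $(f')^{*}(\tau^{*}H) \sim \q \, \tau^{*}H$, and an analogous pullback of the ample divisor of Lemma~\ref{lem:MRC} works on $W'$.
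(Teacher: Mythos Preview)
Your approach matches the paper's proof almost exactly: take $W'$ to be the normalization of $W\times_{Y}Y'$, obtain $X'$ by Stein factorization of $\sigma\circ\delta$, use equi-dimensionality of $p$ to get that $\delta$ is \'etale in codimension one, check $\tau$ is \'etale in codimension one over the domain of $\sigma^{-1}$, build $f_{W'}$ from $f_{W}\times f_{Y'}$, and extract $f'$ by Stein factorization.

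The one place where you make life harder than necessary is the irreducibility issue. You propose to pick an irreducible component of the normalized fiber product and then argue, in the style of Proposition~\ref{prop:qXfqX}, that this component is preserved rather than permuted by the lifted endomorphism. The paper avoids this entirely: since a general fiber of $p'\colon W'\to Y'$ is (isomorphic to) a general fiber of $p$ and hence rationally connected, in particular connected, and since $Y'$ is irreducible, $W'$ itself is connected and therefore irreducible. So the ``main obstacle'' you identify simply does not arise, and no component-preservation argument is needed.
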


\begin{proof}
Let \( W' \) be the normalization  of \( W \times_{Y} Y' \).
Let \( \delta \colon W' \to W \) and
\( p' \colon W' \to Y' \) be the morphisms induced from the first and
second projections, respectively.
Then, a general fiber of \( p' \) is also a rationally connected variety.
In particular, \( W' \) is connected; thus \( W' \) is a normal projective variety.
Since \( p \) is equi-dimensional, \( p' \) is also equi-dimensional and
the finite morphism \( \delta \colon W' \to W \) is \'etale in codimension one.
As the Stein factorization of the composite
\(  \sigma \circ \delta \colon W' \to W \to X \),
we have a birational morphism \( \sigma' \colon W' \to X' \) and
a finite morphism \( \tau \colon X' \to X \) for a normal projective variety \( X' \)
such that \( \tau \circ \sigma' = \sigma \circ \delta \).
Let \( U \subset X \) be the domain of \( \sigma^{-1} \colon X \ratmap W \).
% Let \( U \subset X \) be the maximum open subset such that
% \( \sigma^{-1}(U) \) is isomorphic to \( U \) via \( \sigma \colon W \to X\).
Then, \( \Codim(X \setminus U) \geq 2 \), and
the restriction \( \tau^{-1}(U) \to U\) of \( \tau \)
is \'etale in codimension one, since so is \( \delta \).
Therefore, \( \tau \colon X' \to X \) is \'etale in codimension one.

A polarized
endomorphism \( f_{W'} \colon W' \to W'\) is induced from
\( f_{W} \times f_{Y'} \) of \( W \times Y' \).
It satisfies \( \delta \circ f_{W'} = f_{W} \circ \delta \)
and \( p' \circ f_{W'} = f_{Y'} \circ p'\).
Moreover, we have relations
\[ (\tau \circ \sigma') \circ f_{W'} = (\sigma \circ \delta) \circ f_{W'} =
\sigma \circ f_{W} \circ \delta = f \circ (\sigma \circ \delta)
= f \circ (\tau \circ \sigma'). \]
Thus, the Stein factorization of
\( (\tau \circ \sigma') \circ f_{W'} \colon W' \to X \) is given by
the birational morphism \( \sigma' \colon W' \to X' \) and
the finite morphism \( f \circ \tau \colon X' \to X \).
Since \( \tau \) is finite,
the Stein factorization of \( \sigma' \circ f_{W'} \colon W' \to X' \)
is also given by the same birational morphism \( \sigma' \colon W' \to X' \).
Therefore, we have an endomorphism \( f' \colon X' \to X' \) such that
\( \sigma' \circ f_{W'} = f' \circ \sigma'\).
We have also \( \tau \circ f' = f \circ \tau \) by
the surjectivity of \( \sigma' \) and by the relation
\( (\tau \circ \sigma') \circ f_{W'} =f \circ (\tau \circ \sigma')\) above.
The endomorphism \( f' \) is polarized by
the pullback of an ample divisor on \( X \) polarizing \( f \).
Thus, we are done.
\end{proof}

\begin{lemma}\label{lem:qYqX}
In the situation of \emph{Lemma~\ref{lem:MRC}},
assume that \( f  \) is non-isomorphic and
polarized. Then\emph{:}
\begin{enumerate}
\item \label{lem:qYqX:1}
\( Y \) has only canonical singularities with \( K_{Y} \sim_{\BQQ} 0 \).

\item \label{lem:qYqX:2}
\( q(X) \leq \tilde{q}(X) = q(Y) \leq q^{\circ}(Y) = q^{\circ}(Y, f_{Y})
= q^{\natural}(Y, f_{Y}) \leq \dim Y\) and
\( q^{\natural}(Y, f_{Y}) \leq q^{\natural}(X, f) \).

\item \label{lem:qYqX:3}
The homomorphism \( p_{*} \colon \pi_{1}(W) \to \pi_{1}(Y) \)
of the fundamental groups is isomorphic
and \( \sigma_{*} \colon \pi_{1}(W) \to \pi_{1}(X) \) is surjective.
\end{enumerate}
\end{lemma}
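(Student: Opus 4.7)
\medskip

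\noindent\emph{Plan.} Part (1) is immediate from Theorem~\ref{thm:nonuniruled}. By Lemma~\ref{lem:MRC}, $f_{Y}$ is polarized with $\deg f_{Y} = \q^{\dim Y}$ where $\q > 1$ since $f$ is non-isomorphic polarized; if $\dim Y > 0$, then $f_{Y}$ is a non-isomorphic polarized endomorphism of the non-uniruled variety $Y$, and the ``in particular'' clause of Theorem~\ref{thm:nonuniruled} forces $Y$ to have only canonical singularities with $K_{Y} \sim_{\BQQ} 0$. The case $\dim Y = 0$ is vacuous.

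For part (2), I would establish the chain link by link. The Leray spectral sequence for a resolution $\mu \colon \widetilde{X} \to X$ gives an injection $\OH^{1}(X, \SO_{X}) \hookrightarrow \OH^{1}(\widetilde{X}, \SO_{\widetilde{X}})$ (using $\mu_{*}\SO_{\widetilde{X}} = \SO_{X}$ since $X$ is normal), so $q(X) \leq \tilde{q}(X)$. I would pick a common smooth model $\widetilde{W}$ dominating $W$ and hence $X$, together with a resolution $\widetilde{Y} \to Y$ such that $\widetilde{W} \to \widetilde{Y}$ is a morphism; since its general fiber is smooth projective rationally connected, we have $h^{1}(\SO) = 0$ on fibers, and a standard Leray argument plus birational invariance yields $\tilde{q}(X) = q(\widetilde{W}) = q(\widetilde{Y}) = q(Y)$, the last equality using rationality of canonical singularities (part (1)). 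The inequality $q(Y) \leq q^{\circ}(Y)$ is tautological, and Proposition~\ref{prop:qXfqX} applied to $(Y, f_{Y})$ (permissible since $Y$ is log-terminal with $K_{Y} \sim_{\BQQ} 0$ and $f_{Y}$ is polarized) supplies $q^{\circ}(Y) = q^{\circ}(Y, f_{Y}) = q^{\natural}(Y, f_{Y}) = \dim A_{Y} \leq \dim Y$. Finally, for $q^{\natural}(Y, f_{Y}) \leq q^{\natural}(X, f)$: given any compatible $(Y', f_{Y'}) \to (Y, f_{Y})$ étale in codimension one, Lemma~\ref{lem:qYqXPart} produces a compatible $(X', f') \to (X, f)$ together with $(W', f_{W'})$ and $p' \colon W' \to Y'$ of the same type, so the preceding argument gives $\tilde{q}(X') = q(Y') = \tilde{q}(Y')$, and taking suprema completes the bound.

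For part (3), the surjectivity of $\sigma_{*} \colon \pi_{1}(W) \to \pi_{1}(X)$ follows because $\sigma$ is a proper birational morphism of normal projective varieties and therefore has connected fibers (Zariski's main theorem). The isomorphism $p_{*} \colon \pi_{1}(W) \to \pi_{1}(Y)$ exploits that the equi-dimensional $p$ has rationally connected general fiber, and a smooth model of such a fiber is simply connected by Campana and Koll\'ar--Miyaoka--Mori; surjectivity comes from connectedness of fibers, while injectivity propagates the triviality of fiberwise $\pi_{1}$ across the base using the equi-dimensionality of $p$. The main obstacle I anticipate is precisely this last step: with only \emph{general} fibers rationally connected, monodromy around the discriminant of $p$ could a priori contribute to the kernel of $p_{*}$, and it is the equi-dimensionality (together with normality of $W$) that rules this out---most efficiently by restricting over $Y_{\reg}$, transferring to a smooth model, and invoking the results of Koll\'ar on fundamental groups of rationally connected fibrations.
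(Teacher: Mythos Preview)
Your argument for parts \eqref{lem:qYqX:1} and \eqref{lem:qYqX:2} is essentially the paper's. One remark on \eqref{lem:qYqX:2}: the equality \( q(\widetilde{W}) = q(\widetilde{Y}) \) is not quite a ``standard Leray argument''; in the paper it rests on Koll\'ar's torsion free theorem \cite{Ko86} to force \( \OR^{1}\tilde{p}_{*}\SO_{\widetilde{W}} = 0 \) globally (the generic vanishing coming from rational connectedness of a general fiber is not by itself enough). One could also reach the same conclusion via Albanese maps, but your wording under-sells a nontrivial input.

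Part \eqref{lem:qYqX:3} has a genuine gap. The paper's route is: choose smooth models \( \mu_{W} \colon \widetilde{W} \to W \), \( \mu_{Y} \colon \widetilde{Y} \to Y \) with a morphism \( \tilde{p} \colon \widetilde{W} \to \widetilde{Y} \); then \( \tilde{p}_{*} \colon \pi_{1}(\widetilde{W}) \to \pi_{1}(\widetilde{Y}) \) is an isomorphism by Koll\'ar \cite{Ko93}, and---crucially---\( \mu_{Y*} \colon \pi_{1}(\widetilde{Y}) \to \pi_{1}(Y) \) is an isomorphism by Takayama \cite{Ty}, because \( Y \) has canonical (hence log-terminal) singularities by \eqref{lem:qYqX:1}. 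From the commutative square \( p_{*} \circ \mu_{W*} = \mu_{Y*} \circ \tilde{p}_{*} \) together with the surjectivity of \( \mu_{W*} \), one then reads off that both \( \mu_{W*} \) and \( p_{*} \) are isomorphisms. Your outline invokes Koll\'ar but not Takayama, and attributes the injectivity of \( p_{*} \) to equi-dimensionality of \( p \) and normality of \( W \); these do not address the comparison on the \( Y \) side. Restricting over \( Y_{\reg} \) and passing to smooth models gets you \( \pi_{1}(\widetilde{W}) \isom \pi_{1}(\widetilde{Y}) \), but without Takayama you only know that \( \pi_{1}(\widetilde{Y}) \to \pi_{1}(Y) \) is surjective, and the injectivity of \( p_{*} \colon \pi_{1}(W) \to \pi_{1}(Y) \) does not follow.
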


\begin{proof}
The assertion \eqref{lem:qYqX:1} follows from Theorem~\ref{thm:nonuniruled} and
Lemma~\ref{lem:MRC}.
In particular, \( q(Y) \leq q^{\circ}(Y) = q^{\circ}(Y, f_{Y})
= q^{\natural}(Y, f_{Y}) \leq \dim Y \) by Proposition~\ref{prop:qXfqX}.
We can take birational morphisms \( \mu_{W} \colon \widetilde{W} \to W \) and
\( \mu_{Y} \colon \widetilde{Y} \to Y \)
from smooth projective varieties \( \widetilde{W} \) and \( \widetilde{Y} \),
respectively,
such that the induced rational map
\( \tilde{p} \colon \widetilde{W} \to \widetilde{Y} \) is
holomorphic and smooth over the complement of a normal crossing
divisor on \( \widetilde{Y} \).
Then, \( q(\widetilde{W}) \geq q(W) \geq q(X) \) by the injections
\( \sigma^{*} \colon \OH^{1}(X, \SO_{X}) \to \OH^{1}(W, \SO_{W}) \) and
\( \mu_{W}^{*} \colon \OH^{1}(W, \SO_{W}) \to
\OH^{1}(\widetilde{W}, \SO_{\widetilde{W}}) \).
On the other hand, \( q(\widetilde{Y}) = \tilde{q}(Y) = q(Y) \), since \( Y \) has
only rational singularities.
% For the fibration \( \tilde{p} \),
We have
\( \OR^{1}\tilde{p}_{*}\SO_{\widetilde{W}} = 0 \) by Koll\'ar's
torsion free theorem \cite{Ko86}, since
a general fiber of \( \tilde{p} \) is rationally connected.
Hence, \( q(\widetilde{W}) = q(\widetilde{Y}) \), consequently,
\( q(Y) = \tilde{q}(X) \geq q(X) \).
For the proof of \eqref{lem:qYqX:2},
it remains to show the inequality: \( q^{\circ}(Y, f_{Y}) \leq q^{\natural}(X, f) \).
We can take a normal projective variety \( Y' \),
a finite covering \( \theta \colon Y' \to Y \) \'etale in codimension one,
and an endomorphism \( f_{Y'} \colon Y' \to Y' \)
such that \( \theta \circ f_{Y'} = f_{Y} \circ \theta\) and
\( q(Y') = q^{\circ}(Y, f_{Y}) \). By Lemma~\ref{lem:qYqXPart}, we have
a normal projective variety \( X' \), a finite covering \( \tau \colon X' \to X \)
\'etale in codimension one, and
an endomorphism \( f' \colon X' \to X' \) such that
\( \tau \circ f' = f \circ \tau \) and \( f_{Y'} \) is
obtained from \( f' \) as in Lemma~\ref{lem:MRC}.
Hence, \( q^{\natural}(X, f) \geq \tilde{q}(X') = q(Y') \) by the argument above.
Thus, the assertion \eqref{lem:qYqX:2} has been proved.

Next, we shall prove \eqref{lem:qYqX:3}. Let \( U \subset X \) be a Zariski open dense
subset of \( X \) such that
\( \mu_{W}^{-1}\sigma^{-1}(U) \isom \sigma^{-1}(U) \isom  U\) for the birational morphisms
\( \sigma \) and \( \mu_{W} \).
Note that the homomorphism \( \pi_{1}(U) \to \pi_{1}(X) \) associated with
the open immersion \( U \injmap X \) is surjective, since \( X \) is normal.
Similarly, \( \pi_{1}(\sigma^{-1}(U)) \to \pi_{1}(W) \) and
\( \pi_{1}(\mu_{W}^{-1}\sigma^{-1}(U)) \to \pi_{1}(\widetilde{W}) \)
are surjective. Thus,
\( \mu_{W *} \colon \pi_{1}(\widetilde{W}) \to \pi_{1}(W) \) and
\( \sigma_{*} \colon \pi_{1}(W) \to \pi_{1}(X) \)
are surjective. On the other hand, by \cite{Ty},
\( \mu_{Y*} \colon \pi_{1}(\widetilde{Y}) \to \pi_{1}(Y) \) is an isomorphism,
since \( Y \) has only canonical singularities.
Moreover, \( \tilde{p}_{*} \colon \pi_{1}(\widetilde{W}) \to \pi_{1}(\widetilde{Y}) \)
is an isomorphism, by \cite{Ko93}, Theorem~5.2 (cf.\ \cite{NZ}, Lemma~5.3).
Hence, \( \pi_{1}(\widetilde{W}) \isom \pi_{1}(W) \isom
\pi_{1}(\widetilde{Y}) \isom \pi_{1}(Y) \). Thus the assertion
\eqref{lem:qYqX:3} has been proved.
\end{proof}

\begin{corollary}\label{cor:qXqY}
Let \( X \) be an \( n \)-dimensional normal projective
variety admitting a non-isomorphic
polarized endomorphism \( f \colon X \to X\).
Then\emph{:}
\begin{enumerate}
\item \label{cor:qXqY:1}
The inequality \( q^{\natural}(X, f) \leq n \) holds, in which
the equality holds if and only if \( X \) is Q-abelian.

\item \label{cor:qXqY:2} %%%changed
If \emph{Conjecture~\ref{conj:Qab}} is true for
the varieties of dimension at most \( n - q^{\natural}(X, f) \),
% \( n\),
then \( \pi_{1}(X) \) contains a finite-index subgroup
which is a finitely generated abelian group of rank
at most \( 2q^{\natural}(X, f) \).
%
% \item \label{cor:qXqY:4}
% If \( q^{\natural}(X, f) \geq n - 3 \), then the conclusion of \eqref{cor:qXqY:3}
% holds.
\end{enumerate}
\end{corollary}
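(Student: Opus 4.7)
The plan is to realise \( q^{\natural}(X,f) \) by a finite covering \( \tau \colon X' \to X \) étale in codimension one carrying a compatible polarized endomorphism \( f' \colon X' \to X' \) with \( \tilde q(X') = q^{\natural}(X,f) \); the supremum is attained since the candidate values are bounded positive integers. I then feed \( (X',f') \) into the machinery of Lemmas~\ref{lem:MRC} and~\ref{lem:qYqX}.

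For the bound in \eqref{cor:qXqY:1}, if \( X' \) is uniruled, Lemma~\ref{lem:MRC} supplies an equi-dimensional fibration \( W' \to Y' \) birational over \( X' \), with \( Y' \) non-uniruled and a polarized endomorphism \( f_{Y'} \). Lemma~\ref{lem:qYqX}\eqref{lem:qYqX:1}--\eqref{lem:qYqX:2} gives \( K_{Y'} \sim_{\BQQ} 0 \) with canonical singularities and \( \tilde q(X') = q(Y') \), while Proposition~\ref{prop:qXfqX} forces \( q(Y') \le q^{\circ}(Y') \le \dim Y' \le n \). If \( X' \) is non-uniruled, the last clause of Theorem~\ref{thm:nonuniruled} delivers the same bound directly with \( Y' = X' \). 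Hence \( q^{\natural}(X,f) \le n \). For the equality case, \( \tilde q(X') = n \) forces \( q(Y') = n \) and hence \( \dim Y' = n \), so the MRC fibration is birational and \( X' \) is non-uniruled (ruling out the uniruled branch); Theorem~\ref{thm:nonuniruled} then puts \( X' \) among varieties with canonical singularities and \( K_{X'} \sim_{\BQQ} 0 \), and Proposition~\ref{prop:split}\eqref{prop:split:2} identifies \( X' \) as an abelian variety, so \( X \) is Q-abelian. Conversely, for Q-abelian \( X \) with abelian cover \( A \to X \), the Albanese-closure lifting argument of Step~2 of the proof of Proposition~\ref{prop:qXfqX} produces a compatible polarized endomorphism on \( A \), giving \( \tilde q(A) = q(A) = \dim A = n \).

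For \eqref{cor:qXqY:2}, using the same \( Y' \), Proposition~\ref{prop:qXfqX} yields a further finite covering \( A \times S \to Y' \) étale in codimension one, with \( A \) abelian of dimension \( q^{\circ}(Y') = q^{\natural}(X,f) \), \( S \) weak Calabi--Yau, and \( f_{Y'} \) covered by a product polarized endomorphism \( f_{A} \times f_{S} \). Since \( \dim S = \dim Y' - q^{\natural}(X,f) \le n - q^{\natural}(X,f) \) and \( f_{S} \) is non-isomorphic whenever \( \dim S > 0 \), the hypothesis activates Conjecture~\ref{conj:Qab} for \( S \), forcing \( S \) to be Q-abelian; combined with \( q^{\circ}(S) = 0 \) this forces \( \dim S = 0 \). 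Hence \( Y' \) is Q-abelian of dimension \( q^{\natural}(X,f) \), and \( \pi_{1}(A) \isom \BZZ^{2q^{\natural}(X,f)} \) maps with finite-index image to a finitely generated abelian subgroup of \( \pi_{1}(Y') \) of rank at most \( 2q^{\natural}(X,f) \). By Lemma~\ref{lem:qYqX}\eqref{lem:qYqX:3}, \( \pi_{1}(Y') \isom \pi_{1}(W') \) surjects onto \( \pi_{1}(X') \), so \( \pi_{1}(X') \) inherits such a subgroup; the finite morphism \( \tau \) then induces \( \tau_{*} \) with finite-index image in \( \pi_{1}(X) \), transporting the subgroup to the required one.

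The main obstacle is the final fundamental-group descent along \( \tau \colon X' \to X \): since \( \tau \) is only étale in codimension one, one must verify that \( \tau_{*}(\pi_{1}(X')) \) has finite index in \( \pi_{1}(X) \), a standard consequence of the finiteness of \( \tau \); the rest is routine assembly of the cited results.
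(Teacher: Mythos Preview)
Your proposal is correct and follows essentially the same route as the paper's proof: pass to a covering \(X'\) attaining \(q^{\natural}(X,f)\), apply the MRC machinery of Lemma~\ref{lem:MRC} and Lemma~\ref{lem:qYqX} to reduce to a non-uniruled base \(Y'\), invoke Proposition~\ref{prop:qXfqX} to split off an abelian factor \(A\) with \(\dim A = q^{\natural}(X,f)\), kill the weak Calabi--Yau factor \(S\) via the hypothesis on Conjecture~\ref{conj:Qab}, and descend the fundamental-group statement through the chain \(A \to Y' \to X' \to X\). The paper organizes the case split (uniruled vs.\ non-uniruled) slightly differently and justifies the finite-index descent for \(\tau_*\) explicitly via the factorization through \(\pi_1(X_{\reg})\), whereas you flag it as the remaining obstacle; but the substance is the same.

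Two small remarks. First, your sentence ``the supremum is attained since the candidate values are bounded positive integers'' reads circularly: boundedness is exactly what you are proving. The clean fix is to note that your argument bounds \(\tilde q(X')\) by \(n\) for \emph{every} admissible covering, whence \(q^{\natural}(X,f)\le n\) and the supremum over nonnegative integers is then attained. Second, for the converse in \eqref{cor:qXqY:1} you may simply invoke Proposition~\ref{prop:qXfqX} directly (a Q-abelian \(X\) has log-terminal singularities and \(K_X\sim_{\BQQ}0\), so the proposition gives \(q^{\natural}(X,f)=q^{\circ}(X)=\dim X\)), rather than pointing into its proof.
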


\begin{proof}
\eqref{cor:qXqY:1}:
Suppose that \( X \) is not uniruled.
Then, \( X \) has only canonical singularities and \( K_{X} \sim_{\BQQ} 0 \) by
Theorem~\ref{thm:nonuniruled}.
Hence, \( q^{\natural}(X, f) \leq n \) in which the equality holds if and only if
\( X \) is Q-abelian, by Proposition~\ref{prop:qXfqX}.
Therefore, we have only to show \( q^{\natural}(X, f) < n \)
assuming that \( X \) is uniruled.
By replacing \( X \) with a finite covering \( \widehat{X} \to X \)
\'etale in codimension one and by replacing \( f \) with
an endomorphism of \( \widehat{X} \) compatible with the original \( f \),
we may assume that \( q^{\natural}(X, f) = \tilde{q}(X) \).
Then, for the morphisms \( \sigma \colon W \to X \) and \( p \colon W \to Y \)
in Lemma~\ref{lem:MRC},
we have \( \tilde{q}(X) = q(Y) \leq \dim Y < \dim X = n\) by
Lemma~\ref{lem:qYqX}, \eqref{lem:qYqX:2}.
Thus, the assertion \eqref{cor:qXqY:1} has been proved.
%
% Let \( \hat{\tau} \colon \widehat{X} \to X \) be
% a finite covering \'etale in codimension one from a
% normal projective variety \( \widehat{X} \) and
% \( \hat{f} \colon \widehat{X} \to \widehat{X} \) an endomorphism such that
% \( \hat{\tau} \circ \hat{f} = f \circ \hat{\tau} \).
% Then, \( \tilde{q}(\widehat{X}) \leq n \) by
% Lemma~\ref{lem:qYqX}, \eqref{lem:qYqX:2}, applied to \( \widehat{X} \) and
% \( \hat{f} \).
% Hence, \( q^{\natural}(X, f) \leq n \).
% If \( X \) is Q-abelian, then \( q^{\natural}(X, f) = n \) by
% Proposition~\ref{prop:qXfqX}.
% We shall show the converse
% that if \( q^{\natural}(X, f) = n \), then \( X \) is Q-abelian.
% Replacing \( X \) with a finite covering \( \widehat{X} \to X \)
% \'etale in codimension one and \( f \) with
% an endomorphism of \( \widehat{X} \) compatible with \( f \),
% we may assume that \( q^{\natural}(X, f) = \tilde{q}(X) = n \).
% Suppose that \( X \) is uniruled.
% Then, for the morphisms \( \sigma \colon W \to X \) and \( p \colon W \to Y \)
% in Lemma~\ref{lem:MRC},
% we have \( \tilde{q}(X) = q(Y) \leq \dim Y < \dim X = n\) by
% Lemma~\ref{lem:qYqX}, \eqref{lem:qYqX:2}.
% This is a contradiction. Hence, \( X \) is not uniruled.
% Then, \( X \) has only canonical singularities with \( K_{X} \sim_{\BQQ} 0 \) by
% Theorem~\ref{thm:nonuniruled}, and furthermore,
% \( X \) is abelian by Proposition~\ref{prop:split}, \eqref{prop:split:2},
% since \( q(X) = \tilde{q}(X) = n \).
% Thus, we have proved \eqref{cor:qXqY:1}.

\eqref{cor:qXqY:2}:
Let \( \hat{\tau} \colon \widehat{X} \to X \) be
a finite covering \'etale in codimension one from a
normal projective variety \( \widehat{X} \) and
\( \hat{f} \colon \widehat{X} \to \widehat{X} \) an endomorphism such that
\( \hat{\tau} \circ \hat{f} = f \circ \hat{\tau} \).
Then, \( \tau_{*} \colon \pi_{1}(\hat{\tau}^{-1}(X_{\reg})) \to \pi_{1}(X_{\reg}) \)
is injective and its image is a finite-index subgroup (cf.\ Remark~\ref{etale1}).
Since the natural inclusion \( X_{\reg} \injmap X \) induces a surjection
\( \pi_{1}(X_{\reg}) \to \pi_{1}(X) \), the image of
\( \tau_{*} \colon \pi_{1}(\widehat{X}) \to \pi_{1}(X) \) is
also a finite-index subgroup.
Thus, we may replace \( (X, f) \) with \( (\widehat{X}, \hat{f}) \).
Therefore, we can assume that \( q^{\natural}(X, f) = \tilde{q}(X) \).
Let \( \q \) be the positive integer defined by \( \q^{n} = \deg f \).

Assume first that \( X \) is not uniruled.
Then, \( X \) has only canonical singularities and \( K_{X} \sim_{\BQQ} 0 \) by
Theorem~\ref{thm:nonuniruled}.
By Proposition~\ref{prop:qXfqX}, we may assume that \( X = A \times S \)
and \( f = f_{A} \times f_{S} \) for
an abelian variety \( A \), a weak Calabi--Yau variety \( S \), and
polarized endomorphisms \( f_{A} \colon A \to A \) and \( f_{S} \colon S \to S \)
with \( \deg f_{A} = \q^{\dim A} \) and
\( \deg f_{S} = \q^{\dim S} \), respectively.
Since \( \dim S = n - \dim A = n - q^{\natural}(X, f) \), we infer that
\( S \) is a point by our assumption on Conjecture~\ref{conj:Qab}.
Thus, \( n = q^{\natural}(X, f) \), \( X = A\), and \( \pi_{1}(X) \) is
a free abelian group of rank \( 2n = 2q^{\natural}(X, f) \).
Therefore, the assertion \eqref{cor:qXqY:2}
is true when \( X \) is not uniruled.

Assume next that \( X \) is uniruled.
Let \( \sigma \colon W \to X \) and \( p \colon W \to Y \)
be as in Lemma~\ref{lem:MRC}. Then, we have a surjection
\( \pi_{1}(Y) \to \pi_{1}(X) \) by Lemma~\ref{lem:qYqX}, \eqref{lem:qYqX:3}.
In particular, if \( Y \) is a point, then \( \pi_{1}(X) \) is trivial.
Thus, we may assume that \( \dim Y > 0\). Then,
\( Y \) is not uniruled and it admits
a polarized endomorphism \( f_{Y} \colon Y \to Y\)
of degree \( \q^{\dim Y} > 1\) by Lemma~\ref{lem:MRC}.
We have
\( q^{\natural}(Y, f_{Y}) = q^{\circ}(Y, f_{Y}) = \tilde{q}(X) = q^{\natural}(X, f)\)
by Lemma~\ref{lem:qYqX}, \eqref{lem:qYqX:2}.
Since \( \dim Y - q^{\natural}(Y, f_{Y}) < n - q^{\natural}(X, f)  \),
we can apply the previous argument to the non-uniruled variety \( Y \).
Thus, \( \pi_{1}(Y) \) contains a finite-index subgroup
which is a finitely generated abelian
group of rank at most \( 2q^{\natural}(Y, f_{Y}) = 2q^{\natural}(X, f)\).
Hence, \( \pi_{1}(X) \) has the same property,
since we have the surjection \( \pi_{1}(Y) \to \pi_{1}(X) \).
Therefore, the assertion \eqref{cor:qXqY:2} has been proved.
\end{proof}

Now, we are ready to prove Theorem~\ref{Th0}, which is a consequence of
Theorems~\ref{thm:nonuniruled} and \ref{thm:solQab}, Proposition~\ref{prop:qXfqX},
and Lemmas~\ref{lem:MRC} and \ref{lem:qYqXPart}.

\begin{proof}[Proof of Theorem~\ref{Th0}]
If \( X \) is not uniruled, then it is proved in
Theorems~\ref{thm:nonuniruled} and \ref{thm:solQab}, and
Proposition~\ref{prop:qXfqX}.
Thus, we may assume that \( X \) is uniruled.
We apply Lemma~\ref{lem:MRC} to the polarized endomorphism \( f \colon X \to X \).
Let \( \sigma \colon W \to X \), \( p \colon W \to Y \),
\( f_{W} \colon W \to W\), and \( f_{Y} \colon Y \to Y\)
be the same objects as in Lemma~\ref{lem:MRC}.
Then \( Y \) has only canonical
singularities and \( K_{Y} \sim_{\BQQ} 0 \) by
Theorem~\ref{thm:nonuniruled}. Moreover, by
Proposition~\ref{prop:qXfqX},
there exist a finite surjective morphism \( A \times S \to Y \)
\'etale in codimension one from the direct product
\( A \times S \) for an abelian variety \( A \) and a weak
Calabi--Yau variety \( S \), and polarized endomorphisms \( f_{A} \),
\( f_{S} \) such that \( f_{A} \times f_{S} \colon A \times S \to A \times S \)
is compatible with \( f_{Y} \).
Here, if \( \dim S > 0 \), then \( \dim S \geq 4 \)
and \( S \) has a non-quotient singularity
by Theorem~\ref{thm:solQab}.

Let \( Z \) be the normalization of the fiber product of
\( p \colon W \to Y \) and \( A \times S \to Y \).
Let \( \varpi \colon Z \to A \times S \) be the morphism induced
from the second projection
and let \( Z \xrightarrow{\rho} V \xrightarrow{\tau} X \) be the Stein factorization of
\( Z \to W \xrightarrow{\sigma} X \).
Then, we have a commutative diagram:
\[ \begin{CD}
A \times S @<{\varpi}<< Z @>{\rho}>> V \\
@VVV @VVV @V{\tau}VV \\
Y @<{p}<< W @>{\sigma}>> \phantom{.}X.
\end{CD} \]
By Lemma~\ref{lem:qYqXPart}, the following hold:
\begin{itemize}
\item  \( Z \) is irreducible.

\item The fibration \( Z \to A \times S \) is equi-dimensional and is birational to
the maximal rationally connected fibration of a smooth model of \( Z \).

\item  The finite surjective morphisms
\( Z \to W \) and \( V \to X \) are \'etale in codimension one.

\item  There exist
polarized
endomorphisms \( f_{Z} \colon Z \to Z \) and
\( f_{V} \colon V \to V \) such that \( f_{V} \) is compatible with \( f \) and that
\( f_{Z} \) is compatible with \( f_{W} \), \( f_{V} \) and \( f_{A} \times f_{S} \).

\end{itemize}
Let \( \pi \colon V \ratmap A \times S\) be
the rational map \( \varpi \circ \rho^{-1}  \).
Then \( Z \) is just the normalization of the graph \( \Gamma_{\pi} \) of \( \pi \).
In particular, \( \Gamma_{\pi} \to A \times S\) is equi-dimensional.
Thus, the conditions required in Theorem~\ref{Th0} are all satisfied.
% Thus, all the assertions of Theorem~\ref{Th0} have been proved.
\end{proof}

The proof of Theorem~\ref{Th1.2new} uses the following:

\begin{lemma}\label{lem:P1bdl}
Let \( Z \) be the normalization of the graph of
\( \pi \colon V \ratmap A \times S \) in \emph{Theorem~\ref{Th0}}
and let \( \varpi \colon Z \to A \times S\) be
the induced equi-dimensional morphism.
Suppose that \( \dim S = 0\). Then \( \varpi \) is flat, and
any fiber of \( \varpi \) is irreducible, normal,
and rationally connected.
If \( \dim Z = \dim A + 1 \), then \( \varpi \) is
a holomorphic \( \BPP^{1}\)-bundle.
\end{lemma}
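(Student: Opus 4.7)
The plan is to exploit the fact that $\dim S = 0$ reduces $A \times S$ to the abelian variety $A$ (hence smooth) and to leverage the equivariance of $\varpi$ with respect to the polarized endomorphisms produced by Theorem~\ref{Th0}. Write $\varpi \colon Z \to A$, and let $f_{Z} \colon Z \to Z$ and $f_{A} \colon A \to A$ be the polarized endomorphisms with $\varpi \circ f_{Z} = f_{A} \circ \varpi$ provided by Theorem~\ref{Th0}. Since $f_{A}$ is a polarized endomorphism of an abelian variety, it is the composition of an isogeny with a translation; in particular it is \'etale, and its periodic points are Zariski dense in $A$.

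First I would show that every fiber of $\varpi$ is connected. A general fiber is rationally connected by Theorem~\ref{Th0}~\eqref{Th0:4}, hence connected; in the Stein factorization $\varpi = g \circ h$ the morphism $g$ is then finite and birational onto the normal variety $A$, hence an isomorphism, so every fiber of $\varpi$ is connected. Next I would establish flatness of $\varpi$. Since $A$ is regular and $\varpi$ is equi-dimensional, miracle flatness reduces this to verifying that $Z$ is Cohen--Macaulay. I would verify the latter by combining the construction of $Z$ (normalization of a graph, \'etale in codimension one over the auxiliary variety $W$ appearing in the proof of Theorem~\ref{Th0}) with the rigidity forced by the polarized endomorphism $f_{Z}$: writing $H_{Z} = \rho^{*}H_{V} + \varpi^{*}H_{A}$ for appropriate ample divisors $H_{V}$ on $V$ and $H_{A}$ on $A$, one has $f_{Z}^{*}H_{Z} \sim \q H_{Z}$, which forces the Hilbert polynomial of $\SO_{Z}(mH_{Z})$ restricted to fibers to be constant, whence flatness.

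With flatness in hand, the locus $U \subset A$ over which the fiber is rationally connected is Zariski open and dense. To upgrade ``rationally connected on $U$'' to ``rationally connected on all of $A$,'' I would use the density of $f_{A}$-periodic points combined with the equivariance: $f_{Z}$ sends $\varpi^{-1}(a)$ finitely and surjectively onto $\varpi^{-1}(f_{A}(a))$ for every $a \in A$, so any fiber is linked through $f_{A}$-iterates to fibers over periodic points, and Koll\'ar--Miyaoka--Mori type persistence of rational connectedness in flat families (facilitated by the \'etale nature of $f_{A}$) propagates the property to every fiber. Irreducibility of each fiber then follows from connectedness together with the density of a rationally connected open subvariety, and normality follows from the flatness of $\varpi$ over the regular base $A$ together with the normality of $Z$ via Serre's $R_{1} + S_{2}$ criterion applied fiberwise.

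Finally, in the case $\dim Z = \dim A + 1$, each fiber is a normal irreducible rationally connected curve, hence isomorphic to $\BPP^{1}$. Thus $\varpi$ is a proper smooth morphism with $\BPP^{1}$-fibers over the smooth base $A$; the existence of an $f_{Z}$-invariant section (obtained by lifting an $f_{A}$-periodic point via the equivariance) or a direct Brauer group computation over the abelian variety $A$ then upgrades this to a holomorphic $\BPP^{1}$-bundle. The hardest step will be establishing flatness (equivalently, Cohen--Macaulay-ness of $Z$) and propagating rational connectedness to every fiber; the equivariance under the polarized endomorphism covering the \'etale isogeny $f_{A}$ of the abelian variety $A$ is the essential tool in both.
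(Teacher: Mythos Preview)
Your proposal has the right overall shape---exploit the \'etale endomorphism $f_{A}$ to propagate good properties from the generic fiber to all fibers---but two of the key steps contain genuine gaps.

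\textbf{Flatness.} Your miracle-flatness argument requires $Z$ to be Cohen--Macaulay, and your justification for this is circular: you assert that $f_{Z}^{*}H_{Z}\sim \q H_{Z}$ ``forces the Hilbert polynomial of $\SO_{Z}(mH_{Z})$ restricted to fibers to be constant, whence flatness.'' But constancy of the fiberwise Hilbert polynomial is exactly what flatness \emph{means} here; you cannot invoke it to deduce Cohen--Macaulayness in order to then conclude flatness. Moreover, $f_{Z}$ restricts to a finite surjection $\varpi^{-1}(a)\to\varpi^{-1}(f_{A}(a))$, not an isomorphism, so it does not directly compare Hilbert polynomials of distinct fibers. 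The paper avoids this entirely: it first proves the elementary but crucial fact that any closed proper subset $\Sigma\subset A$ with $f_{A}^{-1}(\Sigma)\subset\Sigma$ must be empty (using that $f_{A}$ is \'etale and polarized), and then shows that the non-flat locus satisfies this condition. For the latter it uses the base-change diagram $Z\xrightarrow{\psi} Z_{1}:=Z\times_{A,f_{A}}A\to Z$: since $\SO_{Z_{1}}$ is a direct summand of $\psi_{*}\SO_{Z}$, local freeness of $\varpi_{*}\SO_{Z}(\q L)$ at $y$ forces local freeness of $\varpi_{*}\SO_{Z}(L)$ at $f_{A}(y)$, so the flat locus is $f_{A}$-stable.

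\textbf{Normality of fibers.} Your claim that ``normality follows from the flatness of $\varpi$ over the regular base $A$ together with the normality of $Z$ via Serre's $R_{1}+S_{2}$ criterion applied fiberwise'' is false as stated. A flat family over a smooth base with normal (even smooth) total space can have non-normal fibers: e.g.\ a flat one-parameter family of plane curves degenerating to a nodal curve inside a smooth threefold. The implication goes the other way (normal fibers plus regular base implies normal total space). The paper instead proves a commutative-algebra lemma: if $R_{0}\subset R_{1}$ are normal domains with $R_{0}$ a direct summand of $R_{1}$ and $R_{1}/IR_{1}$ is normal, then so is $R_{0}/I$. Applied to the finite morphism $\psi\colon Z\to Z_{1}$ above, this shows that if $\varpi^{-1}(y')$ is normal for some $y'\in f_{A}^{-1}(y)$, then so is $\varpi^{-1}(y)$; hence the non-normal locus in $A$ satisfies $f_{A}^{-1}(\Sigma)\subset\Sigma$ and is empty. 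Your irreducibility step (``connectedness together with the density of a rationally connected open subvariety'') is likewise unjustified; the paper handles irreducibility by the same $f_{A}^{-1}(\Sigma)\subset\Sigma$ mechanism. Finally, for the $\BPP^{1}$-bundle assertion, once every fiber is $\BPP^{1}$ the morphism is smooth and hence a holomorphic $\BPP^{1}$-bundle; no section or Brauer argument is needed (the paper treats the question of associating it to a rank-two bundle separately).
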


\begin{proof}
Let \( V \to X\), \( Z \to V\), \( Z \to A \), and \( A \to Y \)
be as in the proof of Theorem~\ref{Th0}, where \( S \) is a point.
We may assume that \( \dim A > 0 \). %%%added
Let \( Z_{1} \) be the fiber product of \( \varpi \colon Z \to A \)
and \( f_{A} \colon A \to A \).
Then the other endomorphism \( f_{Z} \colon Z \to Z\) induces
a commutative diagram
\begin{equation}\label{eq:cd}
\begin{CD}
Z @>{\psi}>> Z_{1} @>{p_{1}}>> Z \\
@V{\varpi}VV @V{p_{2}}VV @V{\varpi}VV \\
A @= A @>{f_{A}}>> \phantom{,}A,
\end{CD}\tag{*}
\end{equation}
where \( p_{1} \) and \( p_{2} \) denote the first and second
projections, and \( f_{Z} = p_{1} \circ \psi \).
Note that \( p_{1} \) is \'etale, since so is \( f_{A} \).
Thus, \( Z_{1} \) is also a normal projective variety
and \( \psi \) is a finite surjective morphism.

\vspace{1ex}

\emph{Step}~1.
We shall prove: \emph{If a subset \( \Sigma \subset A \) is
not Zariski dense
and \( f_{A}^{-1}(\Sigma) \subset \Sigma \), then \( \Sigma = \emptyset \)}.

We shall derive a contradiction by assuming \( \Sigma \ne \emptyset \).
First, we note that \( f_{A}^{-1}(\overline{\Sigma}) \subset \overline{\Sigma} \) for
the Zariski-closure \( \overline{\Sigma} \) of \( \Sigma \).
In fact, by assumption,  we have
\( f_{A}^{-1}(A \setminus \Sigma) \supset A \setminus \Sigma \supset
A \setminus \overline{\Sigma} \ne \emptyset\).
Thus, \( f_{A}(A \setminus \overline{\Sigma}) \) is
a Zariski-open subset contained in \( A \setminus \Sigma \),
since \( f_{A} \) is an open map.
Hence \( f_{A}(A \setminus \overline{\Sigma})  \subset A \setminus \overline{\Sigma} \),
and equivalently, \( f_{A}^{-1}(\overline{\Sigma}) \subset \overline{\Sigma}\).
% Assume the contrary that \( \Sigma \ne \emptyset \).
% Let \( \overline{\Sigma} \) be the Zariski closure of \( \Sigma \).
% We shall show that \( f_{A}^{-1}(\overline{\Sigma}) \subset \overline{\Sigma} \).
% We have \( f_{A}^{-1}(A \setminus \Sigma) \supset A \setminus \Sigma \supset
% A \setminus \overline{\Sigma} \ne \emptyset\) by assumption.
% Since \( f_{A} \) is an open map,
% \( f_{A}(A \setminus \overline{\Sigma}) \) is
% a Zariski open subset contained in \( A \setminus \Sigma \).
% Hence \( f_{A}(A \setminus \overline{\Sigma})  \subset A \setminus \overline{\Sigma} \),
% and equivalently, \( f_{A}^{-1}(\overline{\Sigma}) \subset \overline{\Sigma}\).
%
Therefore, replacing \( \Sigma \) with \( \overline{\Sigma} \),
we may assume that \( \Sigma \) is Zariski-closed.
There is a positive integer \( l \) such that \( f_{A}^{-l}(\Sigma) =
f_{A}^{-l-1}(\Sigma) \) by the Noetherian condition for Zariski-closed subsets.
Hence, \(f_{A}^{-1}(\Sigma) = \Sigma \).
Replacing \( f \) with a power \( f^{k} \), we
may assume that \( f_{A}^{-1} \) preserves every irreducible component
of \( \Sigma \). Thus, we may assume that \( \Sigma \) is
irreducible.
Let \( f_{\Sigma} \colon \Sigma \to \Sigma\)
be the polarized endomorphism of \( \Sigma \) induced from \( f_{A} \).
Then \( \deg f_{A} = \q^{\dim A} \) and
\( \deg f_{\Sigma} = \q^{\dim \Sigma} \) for some \( \q > 1 \)
by Lemma~\ref{lem:first}.
On the other hand, \( \deg f_{\Sigma} = \deg f_{A}  \),
since \( f_{A} \) is \'etale. Thus, \( \dim \Sigma = \dim A \).
This is a contradiction.

\vspace{1ex}

\emph{Step}~2. We shall prove: \emph{Any fiber of \( \varpi \) is irreducible}.

Let \( \Sigma \) be the set of points \( y \in A \) such that \(
\varpi^{-1}(y) \) is reducible. Then \( \varpi^{-1}(y') \) is
reducible for any \( y' \in f_{A}^{-1}(y) \), since \( \psi \) in the
diagram \eqref{eq:cd} is surjective. Thus, \( f_{A}^{-1}(\Sigma)
\subset \Sigma \). Since a general fiber of \( \varpi \) is irreducible,
we have \( \Sigma = \emptyset \) by
\emph{Step}~1.

\vspace{1ex}

\emph{Step}~3. We shall prove: \emph{\( \varpi \) is flat}.

Let \( L \) be an ample divisor on \( Z \) such that \( f_{Z}^{*}L
\sim \q L \). Since \( \SO_{Z_{1}} \) is a direct summand of
\(\psi_{*}\SO_{Z} \)
(cf.\ the first part of the proof of Lemma~\ref{lem:splitnormal} below), %%%added
we infer that
\( \varpi_{*}\SO_{Z}(f_{Z}^{*}L) \isom \varpi_{*}\SO_{Z}(\q L) \)
contains
\[ p_{2*}\SO_{Z_{1}}(p_{1}^{*}L) \isom
f_{A}^{*}\left(\varpi_{*}\SO_{Z}(L)\right) \]
as a direct summand. In particular, if \( \varpi_{*}\SO_{Z}(\q L) \) is
locally free at a point \( y \in A \), then so is \( \varpi_{*}\SO_{Z}(L)
\) at \( f_{A}(y) \).
Let \( U \) be the set of points \( y \in A \) such that \( \varpi \)
is flat along \( \varpi^{-1}(y) \). Then \( U \) is a Zariski open
dense subset. The argument above says that \( f_{A}(U) \subset U \),
since \( y \in U \) if and only if \( \varpi_{*}\SO_{Z}(mL) \) is
locally free at \( y \) for \( m \gg 0 \) (cf.\ \cite{EGA3II}, Proposition~7.9.14).
Thus, for the complement \( \Sigma \) of
\( U \) in \( A \), we have \( f_{A}^{-1}(\Sigma) \subset \Sigma \).
Then \( \Sigma = \emptyset \) by \emph{Step}~1, and hence \( \varpi \)
is flat.

\vspace{1ex}

\emph{Step}~4. We shall prove: \emph{Any fiber of \( \varpi \) is normal}.

Let \( \Sigma \) be the set of points \( y \in A \) such that
the fiber \( F_{y} := \varpi^{-1}(y) \) is not normal.
We fix a point \( y \in \Sigma \) and a non-normal point \( x \) of \( F_{y} \).
For a point \( y' \in f_{A}^{-1}(y) \), let \( x' \) be a point of \( F_{y'} \)
such that \( f(x') = x \).
Then, \( x_{1} := \psi(x') \) is a point of \( Z_{1} \) such that
\( \{x_{1}\} = p_{1}^{-1}(x) \cap p_{2}^{-1}(y')\).
Note that \( x_{1} \) is a non-normal point of \( p_{2}^{-1}(y') \),
since \( p_{2}^{-1}(y') \isom F_{y} \).

Assume that \( y' \not\in \Sigma \), i.e., \( F_{y'} \) is normal.
We have affine open neighborhoods \( U' \subset Z\) and \( U_{1} \subset Z_{1}\)
of \( x' \) and \( x_{1} \),
respectively, such that \( U' = \psi^{-1}(U_{1})\).
Thus, \( U' = \operatorname{Spec} R' \) and \( U_{1} = \operatorname{Spec} R_{1} \)
for finitely generated \( \BCC \)-algebras \( R' \) and \( R_{1} \)
such that \( R' \) and \( R_{1} \) are normal domains,
\( R_{1} \) is a subalgebra of \( R' \) and that
\( R' \) is a finite \( R_{1} \)-module.
Let \( I \) be the ideal of \( R_{1} \) defining the closed subscheme
\( U_{1} \cap p_{2}^{-1}(y') \isom U_{1} \times_{A} \{y'\} \).
Then, \( R'/IR' \) is normal, since
\( U' \cap F_{y'} \isom U' \times_{A} \{y'\}\) is normal.
Thus, \( R_{1}/I \) is normal by Lemma~\ref{lem:splitnormal} below.
Therefore, \( p_{2}^{-1}(y') \) is normal at \( x_{1} \). This is a contradiction.
Thus, \( y' \in \Sigma \).
Hence, \( f_{A}^{-1}(\Sigma) \subset \Sigma \).
Since a general fiber of \( \varpi \) is normal,
we have \( \Sigma = \emptyset \) by \emph{Step}~1,

\vspace{1ex}

\emph{Step}~5. We shall prove: \emph{Any fiber of \( \varpi \) is rationally connected}.

A general fiber of \( \varpi \) is rationally connected by the construction
of \( \varpi \) in the proof of Theorem~\ref{Th0}.
Let \( \Sigma \) be the set of points \( y \in A \) such that the fiber
\( F_{y} = \varpi^{-1}(y) \) is not rationally connected.
If \( F_{y'} \) is rationally connected
for a point \( y' \in f_{A}^{-1}(y) \), then \( F_{y} \) is rationally connected,
since \( F_{y'} \to F_{y} \) is surjective.
Thus, \( f_{A}^{-1}(\Sigma) \subset \Sigma \). Therefore, \( \Sigma = \emptyset \)
by \emph{Step}~1.

\vspace{1ex}

\emph{Step}~6. End of the proof:
% The remaining case: \( \dim Z/A = 1 \).

Finally, we consider the case where \( \dim Z/A = 1 \).
Then, \( \varpi \) is flat and any fiber of \( \varpi \) is \( \BPP^{1} \) by
\emph{Steps}~2--5. In particular, \( \varpi \) is smooth and is a
holomorphic \( \BPP^{1} \)-bundle. Thus, we are done.
\end{proof}

The lemma below on commutative algebra is used in \emph{Step}~4 of
the proof of Lemma~\ref{lem:P1bdl}:

\begin{lemma}\label{lem:splitnormal}
Let \( R_{0} \) and \( R_{1} \) be commutative algebras
finitely generated over a field \( k \) of characteristic zero.
Assume that \( R_{0} \) and \( R_{1} \) are normal integral domains,
\( R_{0} \) is a \( k \)-subalgebra of \( R_{1} \) and
that \( R_{1} \) is a finite \( R_{0} \)-module.
Let \( I \) be an ideal of \( R_{0} \) such that
\( R_{1}/IR_{1} \) is normal. Then, \( R_{0}/I \) is also normal.
\end{lemma}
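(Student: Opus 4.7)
The plan is to use the field trace in characteristic zero to realize $R_0/I$ as an $R_0/I$-module direct summand of $R_1/IR_1$, and then to descend normality across this splitting via Serre's criterion. First, I would construct the splitting. Since $R_0 \subset R_1$ is a finite extension of normal integral domains over $k$ of characteristic zero, the induced field extension $\operatorname{Frac}(R_0) \subset \operatorname{Frac}(R_1)$ is finite and separable of some degree $n$. The field trace $\tau\colon \operatorname{Frac}(R_1) \to \operatorname{Frac}(R_0)$ sends $R_1$ into $R_0$, because every element of $R_1$ and all its conjugates are integral over the normal ring $R_0$, so their sum lies in $\operatorname{Frac}(R_0) \cap (\text{integral closure of } R_0) = R_0$. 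Dividing by $n \in k^{\times}$ yields an $R_0$-linear retraction $\rho := \tau/n\colon R_1 \to R_0$ of the inclusion, and reducing modulo $I$ gives an $R_0/I$-linear retraction $\bar\rho\colon R_1/IR_1 \to R_0/I$ of the canonical map $\iota\colon R_0/I \to R_1/IR_1$. In particular $\iota$ is injective (so $IR_1 \cap R_0 = I$) and $R_0/I$ is a direct $R_0/I$-module summand of $R_1/IR_1$. Since $R_1/IR_1$ is normal hence reduced, $R_0/I$ is reduced as well.

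Writing $A := R_0/I$ and $B := R_1/IR_1$, I would verify normality of $A$ via Serre's conditions $(R_1)$ and $(S_2)$. For $(S_2)$: any ideal $J \subset A$ of height at least two satisfies $\operatorname{ht}(JB) \geq 2$ by going-up for finite integral extensions, so $H^i_{JB}(B) = 0$ for $i \leq 1$; since $B$ is finite over $A$, $H^i_J(B) = H^i_{JB}(B)$, and $\bar\rho$ exhibits $H^i_J(A)$ as a direct $A$-module summand of $H^i_J(B)$, hence $H^i_J(A) = 0$ for $i \leq 1$. For $(R_1)$, localize at a height-one prime $\mathfrak{p} \subset A$: the problem reduces to showing that the one-dimensional local reduced Noetherian ring $A_\mathfrak{p}$, embedded as a direct $A_\mathfrak{p}$-module summand of the semilocal normal ring $B_\mathfrak{p} = \prod_k B_k$ (a finite product of DVRs), is itself normal.

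The main technical content lies in this last step, in two substeps. First, one shows $A_\mathfrak{p}$ is a domain: if $A_\mathfrak{p}$ had minimal primes $\mathfrak{P}_1 \neq \mathfrak{P}_2$, partition all minimal primes of $A_\mathfrak{p}$ into $I_x, I_y$ with $1 \in I_x$, $2 \in I_y$, and use prime avoidance to pick $x \in \bigcap_{i \in I_x} \mathfrak{P}_i$ avoiding $\bigcup_{i \in I_y} \mathfrak{P}_i$ and $y$ symmetrically, so that $xy = 0$. Partitioning the DVR factors $B_k$ of $B_\mathfrak{p}$ according to which of $x, y$ vanishes yields an idempotent $e \in B_\mathfrak{p}$ with $xe = 0$ and $y(1-e) = 0$; applying $\bar\rho$, the element $f := \bar\rho(e) \in A_\mathfrak{p}$ satisfies $xf = 0$, $y(1-f) = 0$, hence $f \in \bigcap_{i \in I_y} \mathfrak{P}_i$ and $1-f \in \bigcap_{i \in I_x} \mathfrak{P}_i$, so $f(1-f) \in \bigcap_i \mathfrak{P}_i = 0$ and $f$ is idempotent; locality of $A_\mathfrak{p}$ forces $f \in \{0,1\}$, contradicting $f \in \mathfrak{P}_2$ and $1-f \in \mathfrak{P}_1$. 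Second, once $A_\mathfrak{p}$ is a one-dimensional local domain, every map $A_\mathfrak{p} \to B_k$ must be injective (otherwise $B_k$ would be finite over the residue field of $A_\mathfrak{p}$, contradicting that $B_k$ is a one-dimensional DVR), so the normalization $A_\mathfrak{p}' \subset \operatorname{Frac}(A_\mathfrak{p})$ lies in each $B_k$. For $x \in A_\mathfrak{p}'$, pick a nonzerodivisor $s \in A_\mathfrak{p}$ with $sx \in A_\mathfrak{p}$; the $A_\mathfrak{p}$-linearity of $\bar\rho$ applied to $s \cdot (x e_k) = (sx) e_k$ gives $s \bar\rho(x e_k) = (sx) f_k$ where $f_k := \bar\rho(e_k)$, so $\bar\rho(x e_k) = x f_k$ in $\operatorname{Frac}(A_\mathfrak{p})$ and $x f_k \in A_\mathfrak{p}$. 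Summing over $k$ and using $\sum_k f_k = \bar\rho(1) = 1$, one concludes $x = \sum_k x f_k \in A_\mathfrak{p}$, so $A_\mathfrak{p}' = A_\mathfrak{p}$ is normal, completing the proof.
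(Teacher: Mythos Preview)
Your setup via the trace map is exactly what the paper does: in characteristic zero, $R_0$ is a direct $R_0$-module summand of $R_1$, hence $R_0/I$ is a direct summand of $R_1/IR_1$ and in particular embeds in it. From this point, however, the paper takes a much shorter route than your Serre-criterion argument. Interpreting ``normal'' as ``normal integral domain'' (as is implicit in the application), the paper observes that $R_0/I$ is then a subring of the domain $R_1/IR_1$, hence itself a domain; its normalization $\overline{R}$ lies inside $R_1/IR_1$ because the latter is normal and integral over $R_0/I$; restricting the retraction $R_1/IR_1 \to R_0/I$ to $\overline{R}$ exhibits $R_0/I$ as a direct summand of $\overline{R}$; and the complementary summand is both torsion (since $\overline{R}$ and $R_0/I$ share the same fraction field) and torsion-free (a submodule of a domain), hence zero. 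So $R_0/I = \overline{R}$ is normal.

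Your detour through $(R_1)$ and $(S_2)$ is not wrong in spirit, but two steps are inaccurate as written. First, going-up does not yield $\operatorname{ht}(JB) \geq 2$; what you need is $\operatorname{ht}(\mathfrak{q}) \geq \operatorname{ht}(\mathfrak{q} \cap A)$ for primes $\mathfrak{q}$ of $B$, which would follow from going-\emph{down} (unavailable, since $A$ is not yet known to be normal) or, when $B$ is a domain, from the dimension formula for finitely generated domains over a field. Second, for a height-one prime $\mathfrak{p} \subset A$ the ring $B_\mathfrak{p}$ is a semilocal one-dimensional normal ring, not in general a product of DVRs: when $B$ is a domain, $B_\mathfrak{p}$ is a semilocal Dedekind domain, i.e.\ a single PID with possibly several maximal ideals, so the only idempotent is $1$. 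With that correction your second substep collapses to precisely the paper's argument applied locally (for $x \in A'_\mathfrak{p}$ with $sx \in A_\mathfrak{p}$ one has $s\bar\rho(x) = \bar\rho(sx) = sx$, so $x = \bar\rho(x) \in A_\mathfrak{p}$), and the first substep becomes vacuous. So your argument is repairable, but the paper's direct use of the normalization is considerably cleaner.
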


\begin{proof}
Since the characteristic of \( k \) is zero,
\( R_{0} \) is a direct summand of the \( R_{0} \)-module \( R_{1} \).
This is shown as follows:
Let \( K_{i} \) be the field of fractions of \( R_{i} \) for \( i = 0 \), \( 1 \).
Then, \( K_{1} \) is a finite extension of \( K_{0} \).
Let \( t \colon K_{1} \to K_{0} \) be the trace map
of the extension \( K_{1}/K_{0} \): for \( a \in K_{1} \),
\( t(a) \) is the trace of the multiplication map
\( \mu(a) \colon K_{1} \to K_{1} \) by \( a \).
The composite \( K_{0} \to K_{1} \xrightarrow{t} K_{0} \)
with the canonical inclusion \( K_{0} \injmap K_{1} \)
is just the multiplication map by \( \deg (K_{1}/K_{0}) = \dim_{K_{0}} K_{1}\).
We have \( t(R_{1}) \subset R_{0} \),
since the eigenvalues of \( \mu(a) \) for
\( a \in R_{1} \) are integral over \( R_{0} \) and
since \( R_{0} \) is integrally closed in \( K_{0} \).
Moreover, the map \( R_{1} \to R_{0} \) induced by \( t \), is \( R_{0} \)-linear.
Thus, \( R_{0} \) is a direct summand of the \( R_{0} \)-module \( R_{1} \),
since \( \deg(K_{1}/K_{0}) \ne 0\) in \( k \).

Therefore, the natural homomorphism
\[ R_{0}/I \to R_{1} \otimes_{R_{0}} (R_{0}/I) \isom  R_{1}/IR_{1} \]
is injective
and \( R_{0}/I \) is regarded as a direct summand of \( R_{1}/IR_{1} \).
Since \( R_{1}/IR_{1} \) is an integral domain, so is \( R_{0}/I \).
Let \( \overline{R} \) be the normalization of \( R_{0}/I \).
Then, \( R_{0}/I \subset \overline{R} \subset R_{1}/IR_{1} \),
since \( R_{1}/IR_{1} \) is normal.
Thus, \( R_{0}/I \) is a direct summand of the \( R_{0}/I \)-module
\( \overline{R} \).
Let \( \overline{R} \to R_{0}/I \) be a projection to the direct summand
%
% Let \( \overline{R} \to R_{0}/I \) be a homomorphism of \( R_{0}/I \)-modules
% such that the composite \( R_{0}/I \subset \overline{R} \to R_{0}/I \)
% is the identity map.
and let \( M \) be the kernel of \( \overline{R} \to R_{0}/I \).
Then, \( M \otimes_{R/I_{0}} \overline{K} = 0\) for the field \( \overline{K} \)
of fractions
of \( R/I_{0} \).
Since \( \overline{R} \to \overline{R} \otimes_{R/I_{0}} \overline{K}
\isom \overline{K} \) is injective, we have \( M = 0 \).
Therefore, \( R/I_{0} \isom \overline{R}\), i.e., \( R/I_{0} \) is normal.
\end{proof}

A holomorphic \( \BPP^{1} \)-bundle is not necessarily associated with
a locally free sheaf of rank two. But we have the following result on
holomorphic \( \BPP^{1} \)-bundles over abelian varieties:

\begin{lemma}\label{lem:holP1bdle}
Let \( Z \to A \) be a holomorphic \( \BPP^{1} \)-bundle over an abelian variety \( A \).
For the multiplication map \( \nu_{2} \colon A \to A \) by \( 2 \),
let \( Z' \to A\) be the \( \BPP^{1} \)-bundle obtained by the pullback of \( Z \to A \)
by \( \nu_{2} \). Then, there exists a locally free sheaf \( \SE \) of rank two on \( A \)
such that \( Z' \isom \BPP_{A}(\SE) \) and \( \det \SE \isom \SO_{A} \).
\end{lemma}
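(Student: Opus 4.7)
The plan is to analyze the obstruction to a holomorphic $\BPP^1$-bundle being the projectivization of a rank two vector bundle via non-abelian cohomology, and show that this obstruction is killed by the pullback $\nu_2^*$. A holomorphic $\BPP^1$-bundle on $A$ corresponds to a class in $\OH^1(A, PGL_2(\SO_A))$ computed in the analytic topology. From the central extension of sheaves of groups
\[
1 \longrightarrow \mu_2 \longrightarrow SL_2(\SO_A) \longrightarrow PGL_2(\SO_A) \longrightarrow 1,
\]
with $\mu_2 = \{\pm 1\}$ the center of $SL_2$, standard non-abelian cohomology theory yields a connecting map
\[
\delta \colon \OH^1(A, PGL_2(\SO_A)) \longrightarrow \OH^2(A, \mu_2),
\]
characterized by the property that a class in $\OH^1(A, PGL_2(\SO_A))$ lifts to $\OH^1(A, SL_2(\SO_A))$ (equivalently, comes from a rank two bundle with trivial determinant) if and only if its image under $\delta$ vanishes.

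I would then compute the Brauer obstruction $\alpha := \delta([Z]) \in \OH^2(A, \mu_2)$. Since $\mu_2$ is the constant sheaf $\BZZ/2\BZZ$ in the analytic topology, we have $\OH^2(A, \mu_2) = \OH^2(A, \BZZ/2\BZZ)$. By naturality of the connecting map, the obstruction for $Z' = \nu_2^{*}Z$ is precisely $\nu_2^{*}\alpha$. The key observation is that $\nu_2^{*}$ annihilates $\OH^k(A, \BZZ/2\BZZ)$ for every $k \geq 1$: indeed, topologically $A$ is a torus, so $\OH^{*}(A, \BZZ/2\BZZ)$ is the exterior algebra over $\OH^1(A, \BZZ/2\BZZ)$, and $\nu_2$ acts as multiplication by $2$ on $\OH^1(A, \BZZ)$, hence as zero on $\OH^1(A, \BZZ/2\BZZ)$.

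Consequently, $\nu_2^{*}\alpha = 0$, so $[Z']$ lifts to a class in $\OH^1(A, SL_2(\SO_A))$. This lifting provides a locally free sheaf $\SE$ of rank two on $A$ with $\det \SE \isom \SO_A$ such that $Z' \isom \BPP_A(\SE)$, which is the required conclusion. The main technical point is setting up the non-abelian connecting map $\delta$ correctly and verifying that its target $\OH^2(A, \mu_2)$ is killed by $\nu_2^{*}$; once this cohomological computation is in place, the rest of the argument is a routine application of the exact sequence of non-abelian cohomology associated with the central extension above.
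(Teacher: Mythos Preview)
Your proposal is correct and follows essentially the same approach as the paper: both use the central extension $1 \to \mu_2 \to SL_2 \to PGL_2 \to 1$, identify the obstruction to lifting as a class in $\OH^2(A,\BZZ/2\BZZ)$, and kill it by $\nu_2^*$. The only cosmetic difference is in the verification that $\nu_2^*$ annihilates $\OH^2(A,\BZZ/2\BZZ)$: the paper argues via the surjection $\OH^2(A,\BZZ)\to\OH^2(A,\BZZ/2\BZZ)$ (using that $\OH^3(A,\BZZ)$ is torsion free), whereas you invoke the exterior-algebra structure of $\OH^*(A,\BZZ/2\BZZ)$ directly; both are standard.
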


\begin{proof}
We have an exact sequence
\[ 1 \to \boldsymbol{\mu}_{2, A} \to \operatorname{\underline{\mathit{SL}}}(2, \SO_{A})
\to \operatorname{\underline{\mathit{PGL}}}(2, \SO_{A}) \to 1\]
of sheaves of non-commutative groups on \( A \),
where \( \operatorname{\underline{\mathit{SL}}}(2, \SO_{A}) \)
(resp.\ \( \operatorname{\underline{\mathit{PGL}}}(2, \SO_{A}) \)) is the
sheaf of germs of \( \operatorname{SL}(2, \BCC) \)-valued
(resp.\ \( \operatorname{PGL}(2, \BCC) \)-valued)
holomorphic functions on \( A \), and
\( \boldsymbol{\mu}_{2, A} \isom (\BZZ/2\BZZ)_{A} \) is the constant sheaf
of \( \boldsymbol{\mu}_{2} = \{\pm 1\} \subset \BCC^{\star} \).
Since \( \boldsymbol{\mu}_{2} \) is the center of \( \operatorname{SL}(2, \BCC) \),
we have an associated exact sequence
\[ \check{\mathrm{H}}^{1}(A, \operatorname{\underline{\mathit{SL}}}(2, \SO_{A})) \to
\check{\mathrm{H}}^{1}(A, \operatorname{\underline{\mathit{PGL}}}(2, \SO_{A}))
\to \OH^{2}(A, \boldsymbol{\mu}_{2, A}) \]
of the $\check{\mathrm{C}}$ech cohomology sets.
The holomorphic \( \BPP^{1} \)-bundle \( Z/A \) is associated with an element
of \( \eta \) of
\( \check{\mathrm{H}}^{1}(A, \operatorname{\underline{\mathit{PGL}}}(2, \SO_{A})) \).
If the image \( \overline{\eta} \) in \( \OH^{2}(A, \boldsymbol{\mu}_{2, A}) \) is zero,
then \( Z \isom \BPP_{A}(\SE) \)
for a locally free sheaf \( \SE \) of rank two with \( \det \SE \isom \SO_{A} \)
which is associated with an element of
\( \check{\mathrm{H}}^{1}(A, \operatorname{\underline{\mathit{SL}}}(2, \SO_{A})) \).
Thus, it is enough to show that \( \overline{\eta} \)
is mapped to zero by
the homomorphism
\( \nu_{2}^{*} \colon \OH^{2}(A, \BZZ/2\BZZ) \to \OH^{2}(A, \BZZ/2\BZZ) \).
The pullback homomorphism
\( \nu_{2}^{*} \colon \OH^{1}(A, \BZZ) \to \OH^{1}(A, \BZZ) \) is just the multiplication map
by \( 2 \).
Since \( \OH^{3}(A, \BZZ) \isom \bigwedge^{3}\OH^{1}(A, \BZZ) \) is torsion free,
the natural sequence
\[ \OH^{2}(A, \BZZ) \xrightarrow{2\times} \OH^{2}(A, \BZZ) \to
\OH^{2}(A, \BZZ/2\BZZ) \to 0\]
is exact; equivalently,
\( \OH^{2}(A, \BZZ) \otimes \BZZ/2\BZZ \isom \OH^{2}(A, \BZZ/2\BZZ) \).
Hence, \( \nu_{2}^{*} \colon \OH^{2}(A, \BZZ/2\BZZ) \to \OH^{2}(A, \BZZ/2\BZZ) \) is
zero; in particular, \( \overline{\eta} \) is mapped to zero by \( \nu_{2}^{*} \).
Thus, we are done.
\end{proof}

Now we are ready to prove Theorem~\ref{Th1.2new}, which follows
essentially from Theorems~\ref{Th0}, \ref{thm:nonuniruled}, and \ref{thm:solQab},
Lemmas~\ref{lem:MRC}, \ref{lem:qYqX}, and \ref{lem:P1bdl},
and Corollary~\ref{cor:qXqY}.

\begin{proof}[Proof of Theorem~\ref{Th1.2new}]
We may assume that \( n = \dim X > 0 \).
We have \( \kappa(X) \leq 0 \) by Theorem~\ref{Th0}.
The inequality \( q^{\natural}(X, f) \leq n \) and
the assertion \eqref{Th1.2new:n} are proved in
Corollary~\ref{cor:qXqY}, \eqref{cor:qXqY:1}.

We shall prove \eqref{Th1.2new:0}: Assume that
\( q^{\natural}(X, f) = 0 \) and Conjecture~\ref{conj:Qab} is true for varieties
of dimension at most \( n = \dim X \).
If \( X \) is not uniruled, then \( X \) is Q-abelian by
the conjecture;
thus
\( n = q^{\natural}(X, f) = 0 \) by Corollary~\ref{cor:qXqY}, \eqref{cor:qXqY:1}.
Hence \( X \) is uniruled.
Let \( \sigma \colon W \to X \) and \( p \colon W \to Y \) be as in
Lemma~\ref{lem:MRC}.
Since \( \dim Y \leq n \),
\( Y \) is Q-abelian by
the conjecture.
On the other hand, \( q^{\circ}(Y) = 0 \) by
Lemma~\ref{lem:qYqX}, \eqref{lem:qYqX:2}. Thus, \( Y \) is a point.
This means that \( X \) is rationally connected.

Next, we shall prove \eqref{Th1.2new:other} and \eqref{Th1.2new:n-1}:
Suppose that
Conjecture~\ref{conj:Qab} is true for varieties of dimension
at most \( n - q^{\natural}(X, f) \).
Let \( \tau \colon V \to X\), \( \rho \colon Z \to V\),
\( \varpi \colon Z \to A \times S\), and \( A \times S \to Y \) be as in the proof of
Theorem~\ref{Th0}.
By the proof of Corollary~\ref{cor:qXqY}, \eqref{cor:qXqY:2},
we infer that \( S \) is a point and \( q^{\natural}(X, f) = \dim A \).
Then \( \varpi \colon Z \to A \) is a flat morphism whose fibers are
all irreducible, normal, and rationally connected
by Lemma~\ref{lem:P1bdl}.
We have polarized endomorphisms \( f_{V} \colon V \to V \), \( f_{Z} \colon Z \to Z \),
and \( f_{A} \colon A \to A \) satisfying the compatibility conditions
in \eqref{Th1.2new:other} by the proof of Theorem~\ref{Th0}.
Thus the assertion \eqref{Th1.2new:other} follows.

Suppose that \( q^{\natural}(X, f) = n - 1 \).
Then \( \varpi \colon Z \to A \) is
a holomorphic
\( \BPP^{1} \)-bundle by Lemma~\ref{lem:P1bdl}.
Assume that \( \rho \colon Z \to V \) is an isomorphism.
Then, this corresponds to the case \eqref{Th1.2new:n-1:a} in a weak sense.
However, by replacing \( V \) by a finite \'etale covering, we can prove
that \( V \) is a \( \BPP^{1} \)-bundle associated
with a locally free sheaf of rank two, as follows.
By Lemma~\ref{lem:holP1bdle}, the fiber product \( Z \times_{A, \nu_{2}} A \)
is a \( \BPP^{1} \)-bundle
associated with a locally free sheaf of rank two for the multiplication map
\( \nu_{2} \colon A \to A \) by \( 2 \) with respect
to a certain group structure of \( A \).
There is an endomorphism \( f'_{A} \colon A \to A \) such that
\( \nu_{2} \circ f'_{A} = f_{A} \circ \nu_2 \), by
\cite{NZ}, Lemma~4.9.
Thus, we may replace \( Z \isom V \) with the \'etale covering
\( Z \times_{A, \nu_{2}} A \) by Lemma~\ref{lem:qYqXPart}
(cf.\ the proof of Theorem~\ref{Th0}).

Assume next that \( \rho \colon Z \to V \) is not isomorphic.
Let \( E \subset Z \) be the
exceptional locus. Then \( f_{Z}^{-1}(E) = E \),
since \( f_{Z} \) is compatible with \( f_{V} \).
Moreover, \( f_{A}^{-1}(\varpi(E)) = \varpi(E) \),
since \( \psi \colon Z \to Z_{1} \) is surjective.
Thus, \( \varpi(E) = A \) by \emph{Step}~1 in the proof of
Lemma~\ref{lem:P1bdl}.
Let \( \Sigma \subset A\) be the set of points \( y \in A \) such
that \( \varpi^{-1}(y) \subset E \).
Then \( f_{A}^{-1}(\Sigma) \subset \Sigma \).
Hence, \( \Sigma = \emptyset \) by
\emph{Step}~1 in the proof of Lemma~\ref{lem:P1bdl}.
Therefore,
\( \varpi|_{E} \colon E \to A \) is a finite surjective morphism.
% In order to prove that the condition \eqref{Th1.2new:n-1:b}
% is satisfied in this case,
It is enough to show that \( E \) is a
%rational section of \( \varpi \), which is equivalent to
%that \( E \) is a section of \( \varpi \),
section of \( \varpi \),
which is equivalent to
that \( \varpi|_{E} \colon E \to A \) is bijective,
since \( A \) is normal.
If \( E \) is a section of \( \varpi \), then \( \varpi \) is a \( \BPP^{1} \)-bundle
associated with the locally free sheaf \( \varpi_{*}\SO_{Z}(E) \) of rank two.

Let \( P \in A \) be an arbitrary point. Then, there exists a positive-dimensional
fiber \( \Gamma \) of \( E \subset Z \to V \) such that \( P \in \varpi(\Gamma) \).
Let \( C \) be the normalization of an irreducible curve in \( \varpi(\Gamma) \)
passing through \( P \) and let \( \nu \colon C \to A \) be the induced finite morphism.
Then, \( Z \times_{A} C \) is a \( \BPP^{1} \)-bundle over \( C \).
It suffices to prove that the support of
\( E \times_{A} C \) is a section of the \( \BPP^{1} \)-bundle.
Note that \( Z \times_{A} C \to Z \to V \) is generically injective and
it contracts any irreducible component \( \gamma \) of \( E \times_{A} C \)
to a point. Since \( Z \times_{A} C \) is a \( \BPP^{1} \)-bundle over \( C \),
the irreducible component \( \gamma \) is a unique curve of
\( Z \times_{A} C \) with negative self-intersection number and
it is a section of the \( \BPP^{1} \)-bundle.
Hence the support of \( E \times_{A} C \) is just the section \( \gamma \).
Therefore, \( E \) is a section of \( \varpi \), and the condition
\eqref{Th1.2new:n-1:b} is satisfied.
Thus, we are done.
\end{proof}

%\section{References}

\end{document}